\documentclass[a4paper,10pt]{article}

\pdfoutput=1
\renewcommand{\tilde}{\widetilde}

\usepackage{geometry}
\usepackage{graphicx}
\usepackage{tikz}
\usepackage{comment}
\usepackage{color}
\usepackage{pgfplots}
\usepackage{pgfplotstable}
\usepackage{booktabs}
\usepackage{listings}
\usepackage{amsmath,amssymb,amsfonts,amsthm,cite}
\usetikzlibrary{patterns,pgfplots.groupplots}
\usepackage{enumerate}
\usepackage{algorithm}
\usepackage{algpseudocode}
\usepackage{stmaryrd}
\usepackage{mathrsfs}
\usepackage{hyperref}

\renewcommand{\tilde}{\widetilde}

\newcommand{\rank}{\mathrm{rank}}
\newcommand{\Span}{\mathrm{span}}
\newcommand{\T}[1]{\mathcal {#1}}

\colorlet{DenseBlockColor}{gray!60}

\date{}

\pgfplotstableset{
	every head row/.style={before row=\toprule,after row=\midrule},
	clear infinite
}
\newtheorem{definition}{Definition}[section]

\newtheorem{remark}{Remark}

\newtheorem{lemma}[definition]{Lemma}
\newtheorem{theorem}[definition]{Theorem}
\newtheorem{corollary}[definition]{Corollary}

\newcommand{\norm}[1]{\lVert#1\rVert}
\definecolor{mygreen}{RGB}{28,172,0} 
\definecolor{mylilas}{RGB}{170,55,241}
\definecolor{stringcolor}{RGB}{180,10,10}
\definecolor{mygray}{RGB}{240,240,240}
\definecolor{mygray2}{RGB}{200,200,200}
\lstset{language=Matlab,%
	basicstyle=\ttfamily,
	breaklines=true,%
	backgroundcolor=\color{mygray},
	frame = leftline,
	morekeywords={hss, hodlr, ulv, cluster, compress, hmatrix, halr}, 
	keywordstyle=\color{blue},%
	morekeywords=[2]{1}, keywordstyle=[2]{\color{black}},
	identifierstyle=\color{black},%
	stringstyle=\color{stringcolor},
	commentstyle=\color{mygreen},%
	showstringspaces=false,
	numbers=left,%
	numberstyle={\tiny \color{black}},
	numbersep=9pt, 
	emph=[1]{for,end,break},emphstyle=[1]\color{red}, 
}

\author{
	Stefano Massei\footnote{Department of Mathematics, University of Pisa. 
		E-mails: stefano.massei@unipi.it, leonardo.robol@unipi.it. Both authors are members of the INdAM/GNCS research group. The research of Leonardo Robol
		has been partially supported by the italian national research center in
		HPC, Big Data and Quantum Computing.} 
	\and
	Leonardo Robol\footnotemark[\value{footnote}]
}

\title{A nested divide-and-conquer method for tensor Sylvester equations with 
	positive definite hierarchically semiseparable coefficients}

\pgfplotsset{compat=1.15}

\begin{document}
\maketitle

\begin{abstract}
	Linear systems with a tensor product structure arise naturally when considering the discretization of Laplace type differential equations or, more generally, multidimensional operators with separable coefficients.  
	In this work, we focus on the numerical solution of linear systems of the form
	$$ \left(I\otimes \dots\otimes I \otimes  A_1+\dots + A_d\otimes I \otimes\dots \otimes I\right)x=b,
	$$
	where the matrices $A_t\in\mathbb R^{n\times n}$ are symmetric positive definite and belong to the class of hierarchically semiseparable matrices. 
	
	We propose and analyze a nested  divide-and-conquer scheme, based on the technology of low-rank updates, that attains the quasi-optimal computational cost $\mathcal O(n^d\log(n))$. Our theoretical analysis highlights the role of inexactness in the nested calls of our algorithm and provides worst case estimates for the amplification of the residual norm. The performances are validated on 2D and 3D case studies.  
	
	\end{abstract}

	\noindent \textbf{Keywords} Tensor equation, Sylvester equation, 
	  Divide and conquer, Rational approximation. \\

	\noindent \textbf{Mathematics Subject Classification} 
	  15A06, 65F10, 65Y20
	
	\section{Introduction}
	
	In this work we are concerned with the numerical solution of linear systems with a Kronecker sum-structured coefficient matrix of the form:
	\begin{equation}\label{eq:lin-sys}
	\left(I\otimes\dots  \otimes I\otimes A_1+\dots + A_d\otimes I \otimes \dots \otimes I\right)x=b,
	\end{equation}
	where the matrices $A_t\in\mathbb R^{n_t\times n_t}$ are symmetric and positive definite
	(SPD) with spectrum contained in $[\alpha_t, \beta_t]\subset \mathbb R^+:=\{z\in\mathbb R\ \text{s.t.}\ z>0\}$, 
	and have low-rank off-diagonal blocks, for $t=1,\dots,d$. 
	By reshaping $x,b\in\mathbb R^{n_1\dots n_d}$  into $d$-dimensional tensors $\T X,\T{B}\in\mathbb R^{n_1\times \dots n_d}$, we rewrite \eqref{eq:lin-sys} as the tensor Sylvester equation
	\begin{equation}\label{eq:tens-sylv}
	\T X\times_1 A_1+\dots+\T X\times_d A_d=\T B,
	\end{equation}
	where $\times_j$ denotes the $j$-mode product for tensors \cite[Section 2.6]{kolda}. 
	
	Tensor Sylvester equations, not necessarily with SPD coefficients, arise naturally when discretizing $d$-dimensional
	Laplace-like operators by means 
	of tensorized grids that respect the separability of the operator
	\cite{townsend2015automatic,townsend-fortunato,strossner2021fast,palitta2016matrix,massei2021rational}. 
	In the case 
	$d = 2$, we recover the well-known case of matrix Sylvester equations, that also 
	plays a dominant role in the model reduction of dynamical systems 
	\cite{antoulas-approximation}. 
	
	Several methods for solving matrix and tensor Sylvester equations assume
	that the right-hand side has some structure, such as being low-rank. This is a necessary 
	assumption for dealing with large scale problems. In this paper, we only consider unstructured right-hand sides $\T B$, for which the cases of interest are those where the memory cost $\mathcal O(\prod_{t=1}^d n_t)$ still allows storing $\T B$ and the solution $\T X$. Note that, this limits the scenarios where our  algorithm is effective to small values of $d$, i.e. $d=2,3$. 
	The structure in
	the coefficients $A_t$ (which are SPD and have off-diagonal blocks of low-rank), 
	will be crucial to improve the complexity of the solver with respect to the 
	completely unstructured case. 
	We also remark that when the $A_t$s 
	arise from the discretization of elliptic differential operators, 
	the structure assumed in this work is often present 
	\cite{hackbusch,borm}.
	
	\subsection{Related work}
	In the matrix case (i.e., $d = 2$ in \eqref{eq:tens-sylv}) there are two main 
	procedures that make no assumptions on $A_1, A_2,$ and $\T B$: the Bartels-Stewart algorithm~\cite{bartels-stewart,recursive-sylvester-kagstrom} and 
	the Hessenberg-Schur method~\cite{hessenberg-schur}. These are based on taking the coefficients $A_1,A_2$ 
	to either Hessenberg or triangular form, and then solving the linear system by (block) 
	back-substitution. The idea has been generalized to $d$-dimensional tensor 
	Sylvester equations in \cite{chen2020recursive}. In the case 
	where $n = n_1 = \ldots = n_d$, the computational complexity of these 
	approaches is $\mathcal O(dn^3 + n^{d+1})$ flops. 
	
	When $d=2$ and the right-hand side $\T B$ is a low-rank matrix or $d>2$ and $\T B$ is representable in a low-rank 
	tensor format (Tucker, Tensor-Train, Hierarchical Tucker, \ldots) the tensor equation 
	can be solved much more efficiently, and the returned approximate solution is low-rank, 
	which allows us to store it in a low-memory format. Indeed, in this case it is possible 
	to exploit tensorized Krylov (and rational Krylov) methods \cite{Simoncini2016,druskin2011analysis,kressner2009krylov}, or the 
	factored Alternating Direction 
	Implicit Method (fADI) \cite{benner2009adi,shi21}. The latter methods build a rank $s$ approximant 
	to the solution $\T X$ by solving  $\mathcal O(s)$ 
	shifted linear systems with the matrices $A_t$. This is very effective when 
	also the coefficients $A_t$ are structured. For instance, when the $A_t$ 
	are sparse or hierarchically low-rank, this often brings the cost of approximating $X$ 
	to $\mathcal O(sn \log^an)$ for $a \in \{0,1,2\}$ \cite{hackbusch}. In the tensor 
	case, another option is to rely on methods adapted to the low-rank structure 
	under consideration: 
	AMEn \cite{dolgov2014alternating} or TT-GMRES \cite{dolgov2013tt} 
	for Tensor-Trains, projection methods in the 
	Hierarchical Tucker format \cite{ballani2013projection}, and other approaches.
	
	In this work, we consider an intermediate setting, where the coefficients $A_t$
	are structured, while the right-hand side $\T B$ is not. More specifically, we assume 
	that the $A_t$ are SPD and efficiently representable in 
	the Hierarchical SemiSeparable format (HSS) \cite{xia2010fast}. This implies that 
	each coefficient $A_t$ can be partitioned in a $2 \times 2$ block matrix 
	with low-rank off-diagonal blocks, and diagonal blocks with the same recursive structure. 
	
	A particular case of this setting has been considered in 
	\cite{townsend-fortunato}, where the $A_t$
	are banded SPD (and therefore have low-rank off-diagonal blocks), and a nested 
	Alternating Direction Implicit (ADI) solver is applied to a 3D tensor equation 
	with no structure in $\T B$. The complexity of the algorithm is quasi-optimal 
	$\mathcal O(n^3 \log^3 n)$, but the hidden constant is very large, and the approach is 
	not practical already for moderate $n$; see \cite{strossner2021fast} for a comparison 
	with methods with a higher asymptotic complexity. 
	
	We remark that, when the coefficients $A_t$ are SPD, the tensor equation \eqref{eq:tens-sylv} can be solved by diagonalization of the $A_t$s
	in a stable way, as described in the pseudocode of Algorithm~\ref{alg:diag}. Without further assumptions, this costs $\mathcal O(dn^3 + n^{d+1})$
	when all dimensions are equal.  
	\begin{algorithm}[H] 
		\small 
		\caption{}\label{alg:diag}
		\begin{algorithmic}[1]
			\Procedure{lyapnd\_diag}{$A_1,A_2,\dots, A_d,\T B$}
			\For{$i=1,\dots,d$}
					\State $[S_i, D_i]=\texttt{eig}(A_i)$
							\State $\T B \gets \T B \times_i S_i^*$
				\EndFor	
			\For{$i_1=1,\dots, n_1,\dots, i_d=1,\dots,n_d$}
			\State $\T X(i_1,\dots,i_d)\gets \T B(i_1,\dots,i_d) /  ([D_1]_{i_1i_1}+\dots + [D_d]_{i_di_d})$
			\EndFor 
			\For{$i=1,\dots,d$}
			\State $\T X \gets \T X \times_i S_i$
			\EndFor	
			\State\Return $\T X$
			\EndProcedure
		\end{algorithmic}
	\end{algorithm} 
	
	If the $A_t$ can be efficiently diagonalized, 
	then Algorithm~\ref{alg:diag} attains a quasi-optimal complexity. For instance, 
	in the case of finite difference discretizations of the $d$-dimensional Laplace operator, 
	diagonalizing the matrices $A_t$ via the fast sine or cosine transforms (depending on the boundary conditions) yields the complexity 
	$\mathcal O(n^d \log n)$. 
	Recently, it has been shown that positive definite HSS enjoy a structured eigendecomposition 
	\cite{ou2022superdc}, that can be retrieved in $\mathcal O(n \log^2 n)$ time. In addition, multiplying a vector by the eigenvector matrix costs only $\mathcal O(n\log(n))$ because the latter can be represented as a product of permutations and Cauchy-like matrices, of logarithmic length. These features
	can be exploited into Algorithm~\ref{alg:diag} to obtain an efficient solver. 
	The approach proposed in this work has the same $\mathcal O(n^d \log n)$ 
	asymptotic complexity but, 
	as we will demonstrate in our numerical experiments, 
	will result in significantly lower computational costs. 
	
	\subsection{Main contributions}
	
	The main contribution is the design and analysis of an algorithm 
	with $\mathcal O(\prod_{j = 1}^d n_j \log(\max_{j} n_j))$
	complexity 
	for
	solving the tensor equation \eqref{eq:tens-sylv} with HSS and SPD coefficients 
	$A_t$. The algorithm is based on a divide-and-conquer scheme, where \eqref{eq:tens-sylv}
	is decomposed into several tensor equations that have either a
	low-rank right-hand side, or a small dimension. In the tensor case, the low-rank 
	equations are solved exploiting nested calls to the $(d-1)$-dimensional solver. 
	Concerning the theoretical analysis, we provide the following contributions:
	\begin{itemize}
		\item An error analysis that, assuming a bounded residual error on the low-rank 
		  subproblems, guarantees the accuracy on the final result of the divide-and-conquer scheme
		  (Theorem~\ref{thm:2d-accuracy} and Lemma~\ref{lem:residual-d>2}). 
		\item A novel a priori error analysis for the use of fADI with inexact solves; 
		  more precisely, in Theorem~\ref{thm:adi-res-inex} we provide an explicit bound for the difference 
		  between the residual norm after $s$ steps of fADI in exact arithmetic, and the one obtained 
		  by performing the fADI steps with inexact solves. This enables us to control the residual 
		  norm of the error based only on the number of shifts used in all calls to fADI 
		  in our solver (Theorem~\ref{thm:accuracy-d}). 
		  These results are very much related to those in \cite[Theorem 3.4 and Corollary 3.1]{kurschner2020inexact}, where also the convergence of fADI with inexact solves is analyzed. Nevertheless, the assumptions and the techniques used in the proofs of such results are quite different. The goal of  \cite{kurschner2020inexact} is to progressively  increase the level of  inexactness, along the iterations of fADI, ensuring that the final residual norm remain below a target threshold. The authors proposed an adaptive relaxation strategy that requires the computation of intermediate quantities generated during the execution of the algorithm. In our work, the level of inexactness is fixed and, by exploiting the decay of the residual norm when using optimal shifts, we provide upper bounds for the number of iterations needed to attain the target accuracy.
		\item We prove that for a $d$-dimensional problem, the condition number $\kappa$ of the 
		  tensor Sylvester equation 
		  can (in principle) amplify the residual norm by a factor $\kappa^{d - 1}$, when a
		  nested solver is used. 
		  When the $A_t$ are $M$-matrices, we show that the impact is reduced to 
		  $(\sqrt{\kappa})^{d - 1}$ (Lemma~\ref{lem:m-matrices}). 
		\item A thorough complexity analysis (Theorem~\ref{thm:2d-complexity} and 
		  \ref{thm:3d-complexity}), where the role of the HSS ranks, the target accuracy, 
		  and the 
		  condition number of the $A_t$s are fully revealed. In particular, we show 
		  that the condition numbers have a mild impact on the computational cost.  
	\end{itemize}
	The paper is organized as follows. In Section~\ref{sec:high-level}, we provide a 
	high-level description of the proposed scheme, for a $d$-dimensional tensor Sylvester 
	equation. Section~\ref{sec:2d-case} and Section~\ref{sec:tensors} are
	dedicated to the theoretical analysis of 
	the algorithm for the matrix and tensor case, respectively. Finally, 
	in the numerical experiments of Section~\ref{sec:numerical-experiments} 
	we compare the proposed algorithm with Algorithm~\ref{alg:diag}
	where the diagonalization is performed with a dense method or with the
	algorithm proposed in \cite{ou2022superdc} for HSS matrices. 
	\subsection{Notation}
	Throughout the paper, we denote matrices with capital 
	letters ($X$, $Y$, \ldots), and tensors with 
	calligraphic letters 
	($\T X, \T Y$, \ldots). We use the same letter with different 
	fonts to denote matricizations of tensors (e.g., $X$ is a matricization of 
	$\T X$).
	The Greek letters $\alpha_t, \beta_t$ indicate the 
	extrema of the interval $[\alpha_t, \beta_t]$ enclosing the spectrum 
	of $A_t$, and $\kappa$ is used to denote the upper bound on 
	the condition number of the 
	Sylvester operator 
	$\kappa = (\beta_1 + \ldots + \beta_d) / (\alpha_1 + \ldots + \alpha_d)$. 
	
	\section{High-level description of the divide-and-conquer scheme}
	\label{sec:high-level}
	
	We consider HSS matrices $A_t$, so  that each $A_t$ 
	can be decomposed as $A_t=A_t^{\mathrm{diag}} + A_t^{\mathrm{off}}$ 
	where $A_t^{\mathrm{diag}}$ is block diagonal with square diagonal blocks, 
	$A_t^{\mathrm{off}}$ is low-rank and
	the decomposition applies recursively to the blocks of
	$A_t^{\mathrm{diag}}$. 
	A particular case, where this assumption is satisfied,  is when the coefficients $A_t$ 
	are all banded.
	
	In the spirit of divide and conquer solvers for matrix equations~\cite{kressner2019low,massei2022hierarchical}, we 
	remark that, given the additive decomposition 
	$A_1=A_1^{\mathrm{diag}}+A_1^{\mathrm{off}}$, 
	the solution $\T X$ of \eqref{eq:tens-sylv} can
	 be written as $\T X^{(1)}+\delta \T X$ where
	\begin{align}
	\T X^{(1)}\times_1 A_1^{\mathrm{diag}}+\T X^{(1)}\times_2 A_2+\dots+\T X^{(1)}\times_d A_d&=\T B,\label{eq:bench}\\
	\delta \T X\times_1 A_1+\delta\T X\times_2 A_2+\dots+\delta\T X\times_d A_d&=-\T X^{(1)} \times_1 A_1^{\mathrm{off}}.\label{eq:update}
	\end{align} 
	If $A_1^{\mathrm{diag}}= \left[\begin{smallmatrix}
	A^{(1)}_{1,11} \\ &A^{(1)}_{1,22}
	\end{smallmatrix}\right]$,
	then \eqref{eq:bench} decouples into two tensor equations of the form
	\begin{equation}\label{eq:diag}
	\T X^{(1)}_{j}\times_1 A_{1,jj}^{(1)}+\T X^{(1)}_{j}\times_2 A_2+\dots+\T X^{(1)}_{j}\times_d A_d=\T B_{j}, \qquad j=1,2,
	\end{equation}  
	with $\T X^{(1)}_{1}$ containing the entries of $\T X^{(1)}$ with the first index restricted to the
	 column indices of $A_{1,11}^{(1)}$, and $\T X^{(1)}_{2}$
	to those of $A_{1,22}^{(1)}$.\footnote{In Matlab notation, if $A_{1,11}^{(1)}$ is of size $m\times m$ we have $\T X_1^{(1)}=\T X^{(1)}(1:m, :, \dots,:)$ and $\T X_2^{(1)}=\T X^{(1)}(m+1:\mathrm{end}, :, \dots,:)$} 
	 Equation~\eqref{eq:update} has the notable property that its 
	 right-hand side is a $d$-dimensional tensor multiplied in the first mode by a low-rank matrix. 
	 Merging the modes from $2$ to $d$ (in the sense of \cite[Section 2.6]{kolda}) in \eqref{eq:update} yields the matrix Sylvester equation
	\begin{equation}\label{eq:reshape-update}
	A_1 \delta X + \delta X \left( I\otimes\dots I\otimes A_2 +\dots +A_d\otimes I\otimes\dots\otimes I\right)= -A_1^{\mathrm{off}}X^{(1)}.
	\end{equation} 
	In particular, the right-hand side of \eqref{eq:reshape-update} 
	has rank bounded by $\rank(A_1^{\mathrm{off}})$ and the matrix 
	coefficients of the equation are positive definite. This implies 
	that $\delta X$ has numerically low-rank and can be efficiently 
	approximated with a low-rank Sylvester solver such as a rational 
	Krylov  subspace method \cite{Simoncini2016,druskin2011analysis} 
	or the \emph{alternating direction implicit method} (ADI) 
	\cite{benner2009adi}. 
	
	We note that, applying the splitting simultaneously to all $d$ 
	modes yields an update equation for $\delta \T X$ of the form 
	\begin{equation} \label{eq:update-all}
		\delta \T X\times_1 A_1+\delta\T X\times_2 A_2+\dots+\delta\T X\times_d A_d 
		=- \sum_{t = 1}^d  \T X^{(1)} \times_t A_t^{\mathrm{off}}, 
	\end{equation}
	and $2^d$ recursive calls:
	\begin{equation} \label{eq:diag-all}
		\T X^{(1)}_{j_1, \ldots, j_d} \times_1 A_{1,j_1 j_1}^{(1)} + 
		\dots +
		\T X^{(1)}_{j_1, \ldots, j_d} 
		\times_d A_{d, j_d j_d} = 
		\T B_{j_1, \ldots, j_d}, \qquad j_t \in \{ 1,2 \}. 
	\end{equation}
	However, when $d > 2$, the right-hand side of \eqref{eq:update-all} is not 
	necessarily low-rank for any 
	matricization. On the other hand, by additive splitting 
	of the right-hand side we can write 
	$\delta \T X := \delta \T X_1 + \ldots + \delta \T X_d$, where
	$\delta \T X_t$ is the solution to an equation of the form 
	\eqref{eq:update}. 
	
	In view of the above discussion, we propose the following recursive strategy for solving \eqref{eq:lin-sys}:
	\begin{enumerate}
		\item if all the $n_i$s are sufficiently small 
		  then solve \eqref{eq:lin-sys} by diagonalization,
		\item otherwise split
		 the equation along all modes as in 
		 \eqref{eq:update-all} and \eqref{eq:diag-all},
		\item compute $\T X^{(1)}$ by solving the $2^d$ equations in \eqref{eq:diag-all} recursively,
		\item approximate $\delta \T X_t$ 
		  by applying a low-rank matrix Sylvester solver 
		  for $t = 1, \ldots, d$. 
		\item return $\T X = \T X^{(1)}+\delta \T X_1 + \ldots + \delta \T X_d$.
	\end{enumerate} 
	The procedure will be summarized in Algorithm~\ref{alg:dac} of Section~\ref{sec:tensors}, 
	where we will consider the case of tensors in detail. 
	To address point 4.\ we can use any of the available low-rank 
	solvers for Sylvester equations \cite{Simoncini2016}; in this work 
	we consider the fADI and the rational Krylov subspace methods that 
	are discussed in detail in the next sections. In Algorithm~\ref{alg:dac} 
	we refer to the chosen method with \textsc{low\_rank\_sylv}. We remark
	 that both these choices require to have a low-rank factorization of 
	 the mode $j$ unfolding of  $\T X_0\times_j A_j^{\mathrm{off}}$ and to 
	 solve shifted linear systems with a Kronecker sum of $d-1$ matrices $A_t$.
	The latter task is again of the form \eqref{eq:lin-sys} with $d-1$ modes and is performed recursively with Algorithm~\ref{alg:dac}, when $d>2$; this makes our algorithm a nested solver. At the base of the recursion, when \eqref{eq:lin-sys} has only one mode, this is just a shifted linear system. We discuss this in detail in section~\ref{sec:low-rank}.

	\subsection{Notation for Hierarchical matrices}
	\label{sec:hierarchical}
	The HSS matrices $A_t$ ($t = 1, \ldots, d$) can be partitioned as follows:
	\begin{equation} \label{eq:hodlr-split}
		A_t = \begin{bmatrix}
			A_{t, 11}^{(1)} & A_{t, 12}^{(1)} \\ 
			A_{t, 21}^{(1)} & A_{t, 22}^{(1)} \\
		\end{bmatrix} \in \mathbb R^{n_t \times n_t}, 
	\end{equation}
	where $A_{t,12}^{(1)}$ and $A_{t,21}^{(1)}$ have low rank, 
	and $A_{t, ii}^{(1)}$ 
	are HSS matrices. In particular, the diagonal blocks are square and can 
	be recursively partitioned in the same way $\ell_t - 1$ times.
	The depth $\ell_t$ is chosen to ensure that the blocks 
	at the lowest level of the recursion are smaller than 
	a prescribed minimal size $n_{\min} \times n_{\min}$. 
	
	More formally, after one step of recursion 
	we partition $I_1^{(0)} = \{ 1, \ldots, n_t \} = 
	I_1^{(1)} \sqcup I_2^{(1)}$ where $I_1^{(1)}$ and $I_2^{(1)}$ are 
	two sets of contiguous indices; the matrices 
	$A_{t, ij}$ in \eqref{eq:hodlr-split} have 
	$I_i^{(1)}$ and $I_j^{(1)}$ as row and column indices, 
	respectively, for $i,j = 1,2$. 
	
	Similarly, after $h \leq \ell_t$ steps of recursion, one has 
	the partitioning $\{ 1, \ldots, n_t \} = 
	I_1^{(h)} \sqcup \ldots \sqcup I_{2^{h}}^{(h)}$, and we denote 
	by $A_{t,ij}^{(h)}$ with $1 \leq i,j \leq 2^h$ the 
	submatrices of $A_t$ with row indices $I_i^{(h)}$ and 
	column indices $I_j^{(h)}$. Note that $A_{t,ii+1}^{(h)}$ 
	and $A_{t,i+1i}^{(h)}$ indicate the low-rank off-diagonal 
	blocks uncovered at level $h$ (see Figure~\ref{fig:hodlr}).
	
	The quad-tree structure of submatrices of $A_t$, corresponding to the above described splitting of row and column indices, is called the \emph{cluster tree} of $A_t$; see \cite[Definition 1]{massei2022hierarchical} for the rigorous definition. The integer $\ell_t$ is called the \emph{depth of the cluster tree}.  
	
	\begin{figure}
		\centering 
		\begin{tikzpicture}[scale=0.8] \small
			\coordinate (T18) at (0,0);  
			\coordinate (T14) at (-4,-1);
			\coordinate (T58) at (4,-1);
			\coordinate (T12) at (-6,-2);
			\coordinate (T34) at (-2,-2);
			\coordinate (T56) at (2,-2);
			\coordinate (T78) at (6,-2);
			\coordinate (T1) at (-7,-3);
			\coordinate (T2) at (-5,-3);
			\coordinate (T3) at (-3,-3);
			\coordinate (T4) at (-1,-3);
			\coordinate (T5) at (1,-3);
			\coordinate (T6) at (3,-3);
			\coordinate (T7) at (5,-3);
			\coordinate (T8) at (7,-3);
			\node (N18) at (T18) {$I^{(0)}_1$};
			\node (N14) at (T14) {$I_1^{(1)}$};
			\node (N58) at (T58) {$I_2^{(1)}$};
			\node (N12) at (T12) {$I_1^{(2)}$};
			\node (N34) at (T34) {$I_2^{(2)}$};
			\node (N56) at (T56) {$I_3^{(2)}$};
			\node (N78) at (T78) {$I_4^{(2)}$};
			\node (N1) at (T1) {$I_1^{(3)}$};
			\node (N2) at (T2) {$I_2^{(3)}$};
			\node (N3) at (T3) {$I_3^{(3)}$};
			\node (N4) at (T4) {$I_4^{(3)}$};
			\node (N5) at (T5) {$I_5^{(3)}$};
			\node (N6) at (T6) {$I_6^{(3)}$};   
			\node (N7) at (T7) {$I_7^{(3)}$};
			\node (N8) at (T8) {$I_8^{(3)}$};
			\draw[->] (N18.south) -- (N14);
			\draw[->] (N18.south) -- (N58);  
			\draw[->] (N14.south) -- (N12);
			\draw[->] (N14.south) -- (N34);    
			\draw[->] (N58.south) -- (N56);  
			\draw[->] (N58.south) -- (N78);
			\draw[->] (N12.south) -- (N1);
			\draw[->] (N12.south) -- (N2);
			\draw[->] (N34.south) -- (N3);  
			\draw[->] (N34.south) -- (N4);  
			\draw[->] (N56.south) -- (N5);  
			\draw[->] (N56.south) -- (N6);          
			\draw[->] (N78.south) -- (N7);  
			\draw[->] (N78.south) -- (N8);          
		\end{tikzpicture} \\[.5cm]
		\begin{tikzpicture}[scale=0.35]    
			  \begin{scope}
				[xshift=0cm]
				\node [above] at (4,8) {$h=0$};
				\draw (0,0) -- (0,8) -- (8,8) -- (8,0) -- cycle;
				\node at (4,4) {$A_t$};
			  \end{scope}
			  \begin{scope}
				[xshift=9cm]
				\node [above] at (4,8) {$h=1$};
				\draw (0,0) -- (0,8) -- (8,8) -- (8,0) -- cycle;
				\draw[fill=blue!25] (0,0) rectangle (4,4);
				\draw[fill=blue!25] (4,4) rectangle (8,8);
				\draw (0,4) -- (8,4);
				\draw (4,0) -- (4,8);
				\node at (2,6) {$A_{t,11}^{(1)}$};
				\node at (2,2) {$A_{t,21}^{(1)}$};
				\node at (6,6) {$A_{t,12}^{(1)}$};
				\node at (6,2) {$A_{t,22}^{(1)}$};
			  \end{scope}
			  \begin{scope}
				[xshift=18cm]
				\node [above] at (4,8) {$h=2$};
				\draw (0,0) -- (0,8) -- (8,8) -- (8,0) -- cycle;
				\draw[fill=blue!25] (4,0) rectangle (6,2);
				\draw[fill=blue!25] (0,4) rectangle (2,6);
				\draw[fill=blue!25] (6,2) rectangle (8,4);
				\draw[fill=blue!25] (2,6) rectangle (4,8);
				\draw (0,2) -- (8,2);
				\draw (0,4) -- (8,4);
				\draw (0,6) -- (8,6);
				\draw (2,0) -- (2,8);
				\draw (4,0) -- (4,8);
				\draw (6,0) -- (6,8);
				\foreach \i in {1, ..., 4} {
					\foreach \j in {1, ..., 4} {
						\node at (2*\i-1, 8-2*\j+1) {\tiny $A_{t,\j\i}^{(h)}$};
					}
				}
			  \end{scope}
			  \begin{scope}
				[xshift=27cm]
				\node [above] at (4,8) {$h=3$};
				\draw (0,0) -- (0,8) -- (8,8) -- (8,0) -- cycle;
				\draw[fill=blue!25] (6,0) rectangle (7,1);
				\draw[fill=blue!25] (4,2) rectangle (5,3);
				\draw[fill=blue!25] (2,4) rectangle (3,5);
				\draw[fill=blue!25] (0,6) rectangle (1,7);
				\draw[fill=blue!25] (7,1) rectangle (8,2);
				\draw[fill=blue!25] (5,3) rectangle (6,4);
				\draw[fill=blue!25] (3,5) rectangle (4,6);
				\draw[fill=blue!25] (1,7) rectangle (2,8);
				
				\draw (0,1) -- (8,1);
				\draw (0,2) -- (8,2);
				\draw (0,3) -- (8,3);
				\draw (0,4) -- (8,4);
				\draw (0,5) -- (8,5);
				\draw (0,6) -- (8,6);
				\draw (0,7) -- (8,7);
				\draw (1,0) -- (1,8);
				\draw (2,0) -- (2,8);
				\draw (3,0) -- (3,8);
				\draw (4,0) -- (4,8);
				\draw (5,0) -- (5,8);
				\draw (6,0) -- (6,8);
				\draw (7,0) -- (7,8);
			  \end{scope}
		\end{tikzpicture}
		\caption{Example of the hierarchical low-rank structure 
		  obtained with the recursive partitioning in 
		  \eqref{eq:hodlr-split}. The light blue blocks are the 
		  low-rank submatrices identified at each level.}
		\label{fig:hodlr}
	\end{figure}
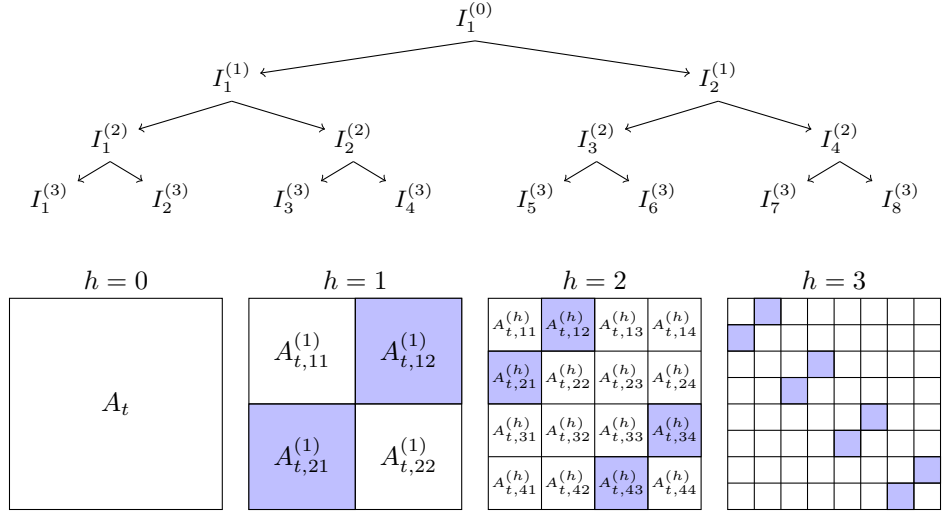
	
	Often, we will need to group together all the diagonal blocks 
	at level $h$; we denote by $A_t^{(h)}$ such matrix, that is:
	\begin{equation} \label{eq:hodlr-level-h}
		A_t = 
		\underbrace{\begin{bmatrix}
			A_{t,11}^{(h)}&  &  &  \\
			 & \ddots &  & \\
			 &  &  \ddots & \\
			 & &  &A_{t,2^{h} 2^{h}}^{(h)}
			\end{bmatrix}}_{A_t^{(h)}} + 
		\begin{bmatrix}
				0 & \star & \dots & \star \\
				\star &\ddots & \ddots & \vdots \\
				\vdots & \ddots & \ddots & \star\\
				\star & \dots & \star & 0
		\end{bmatrix}.
	\end{equation}
	Finally, the maximum rank of the off-diagonal blocks of an HSS matrix is called the \emph{HSS rank}. 
	
	\subsection{Representation and operations with HSS matrices}
	An $n\times n$ matrix in the form described in the previous section, with HSS rank $k$, can be 
	effectively stored in the HSS format \cite{xia2010fast}, using only 
	$\mathcal O(nk)$ memory. Using this structured representation, 
	matrix-vector multiplications and solution 
	of linear systems can be performed with $\mathcal O(nk)$ and 
	$\mathcal O(nk^2)$ flops, respectively. 
	
	Our numerical results leverage the implementation of this format 
	and the related matrix operations 
	available in \texttt{hm-toolbox} \cite{massei2020hm}. 
	
	\section{The divide-and-conquer approach for matrix Sylvester equations}
	\label{sec:2d-case}
	We begin by discussing the case $d=2$, that is 
	the matrix Sylvester equation 
	\begin{equation} \label{eq:sylv2d}
		A_1 X + XA_2 = B, 
		\qquad 
		B \in \mathbb{C}^{n_1 \times n_2},
	\end{equation}
	since there is a
	major difference with respect to $d>2$ that makes the 
	theoretical analysis much simpler. Indeed, the call 
	to \textsc{low\_rank\_sylv} for solving \eqref{eq:reshape-update}
	does not need to recursively call the divide-and-conquer scheme, 
	which is needed when $d > 2$ for generating the 
	right Krylov subspaces. Moreover, we assume that 
	$\ell_1 = \ell_2 =: \ell$, that automatically implies that 
	$n_1$ and $n_2$ are of the same order of magnitude; the 
	algorithm can be easily adjusted for unbalanced dimensions, 
	as we discuss in detail in Section~\ref{sec:unbalanced-2d}. 
	
	We will denote by 
	$B^{(h)} = [B_{ij}^{(h)}]$ 
	the matrix $B$ seen as a block matrix partitioned according 
	to the cluster trees of $A_1$ and $A_2$ at level $h$ for 
	the rows and columns, respectively. 
	
	Splitting both modes at once yields the update equation
	\begin{equation}\label{eq:2dcase}
	A_1\delta X + \delta X A_2 = -A_1^{\mathrm{off}} X^{(1)} - 
	X^{(1)} A_2^{\mathrm{off}},\qquad X^{(1)}=\begin{bmatrix}
	X^{(1)}_{11}& X_{12}^{(1)} \\ 
	X_{21}^{(1)} & X_{22}^{(1)}
	\end{bmatrix}
	\end{equation}
	where $X_{ij}^{(1)}$ is the solution of
	$A_{1,ii}^{(1)} X_{ij}^{(1)} + 
	X_{ij}^{(1)} A_{2,jj}^{(1)} = B^{(1)}_{ij}$ 
	and $B^{(1)}_{ij}$ is the block in position $(i,j)$ in 
	$B^{(1)}$. 
	The right-hand side of \eqref{eq:2dcase} has rank bounded by 
	$\rank(A_1^{\mathrm{off}})+\rank(A_2^{\mathrm{off}})$, 
	therefore we can use \textsc{low\_rank\_sylv} 
	to solve \eqref{eq:2dcase}.  The procedure is summarized in Algorithm~\ref{alg:dac2d-balanced}. 
	
	\begin{algorithm}[H] 
		\small 
		\caption{}\label{alg:dac2d-balanced}
		\begin{algorithmic}[1] 
			\Procedure{lyap2d\_d\&c\_balanced}{$A_1,A_2, B, \epsilon$}
			\If{$\max_i n_i\leq n_{\min}$}
			\State\Return\Call{lyapnd\_diag}{$A_1,A_2, B$}
			\Else
			
					\State $X_{ij}^{(1)} \gets$ \Call{Lyap2D\_D\&C}{$A_{1,ii}^{(1)}, A_{2,jj}^{(1)}, B_{ij}^{(1)}$}, \ for $i,j = 1,2$
					\State $X^{(1)} \gets \left[
						\begin{smallmatrix}
							X_{11}^{(1)} & X_{12}^{(1)} \\ 
							X_{21}^{(1)} & X_{22}^{(1)} \\ 
						\end{smallmatrix}
					\right]$
					\State Retrieve a low-rank factorization of 
				  $A_1^{\mathrm{off}} X^{(1)} + X^{(1)} A_2^{\mathrm{off}} = UV^T$\label{step:fact}
			
			\State $\delta X\gets \Call{low\_rank\_sylv}{A_1, A_2, U, V, \epsilon}$
			\State\Return $X^{(1)}+\delta X$
			\EndIf
			\EndProcedure
		\end{algorithmic}
	\end{algorithm} 
	
	\begin{remark}\label{rem:rhs}
		Efficient solvers of Sylvester equations with low-rank right-hand sides
		need a factorization of the latter; see line~\ref{step:fact} 
		in Algorithm~\ref{alg:dac2d-balanced}. Once the matrix $X^{(1)}$ is 
		computed, the factors $U$ and $V$ are retrieved with explicit formula 
		involving the factorizations of the matrices $A_1^{\mathrm{off}}$ and 
		$A_2^{\mathrm{off}}$ \cite[Section 3.1]{kressner2019low}. The low-rank 
		representation can be cheaply compressed via a QR-SVD based procedure; 
		our implementation always apply this compression step. 
	\end{remark}
	
	\subsection{Analysis of the equations generated in the recursion}
	\label{sec:equations-recursion}
	
	In this section we introduce the notation for all the equations 
	solved in the recursion, as this will be useful in the error and 
	complexity analysis. 
	
	We denote with capital letters (e.g., $X, \delta X$)
	the exact solutions of such equations, and with an additional
	tilde 
	(i.e., $\widetilde{X}$ and $\delta \widetilde{X})$ 
	their inexact counterparts obtained in finite 
	precision computations. 
	
	\subsubsection{Exact arithmetic}
	\label{sec:d=2-exact-equations}
	
	We begin by considering the computation, with Algorithm~\ref{alg:dac2d-balanced}, of the solution of 
	a matrix Sylvester equation, assuming that 
	all the equations generated during the recursion are solved exactly. 
	In this scenario, the 
	solution $X$ admits the additive splitting 
	\[
		X = X^{(\ell)} + \delta X^{(\ell-1)} + \ldots + \delta X^{(0)}, 
	\]
	where $X^{(\ell)}$ or $\delta X^{(h)}$ contains all the 
	solutions determined 
	at depth $\ell$ and $h$ in the recursion, respectively. 
	More precisely, $X^{(\ell)}$ 
	takes the form 
	\begin{equation} \label{eq:xsplitting}
		X^{(\ell)} := \begin{bmatrix}
			X_{1,1}^{(\ell)} & \dots & X_{1, 2^{\ell}}^{(\ell)} \\ 
			\vdots    & & \vdots \\
			\T X_{2^{\ell}, 1}^{(\ell)} & \dots & \T X_{2^{\ell},2^{\ell}}^{(\ell)}. 
		\end{bmatrix},
	\end{equation}
	where $X^{(\ell)}_{i,j}$ solves the Sylvester equation 
	$A_{1, ii}^{(\ell)} X^{(\ell)}_{i,j} + 
	X^{(\ell)}_{i,j} A_{2,jj}^{(\ell)} = B_{i,j}$; for 
	$h < \ell$, we denote $X^{(h)} := X^{(\ell)} + 
	\delta X^{(\ell-1)} + \ldots + \delta X^{(h)}$, that solves $A_1^{(h)} X^{(h)} + X^{(h)} A_2^{(h)} = B^{(h)}$.

	The matrix $\delta X^{(h)} = [\delta X^{(h)}_{i,j}]$ 
	containing the solutions of the
	update equations at level $h < \ell$ is block-partitioned
	analogously 
	to $B^{(h)}$ and $X^{(h)}$. 
	The diagonal blocks $A_{1, ii}^{(h)}$ can be 
	in turn 
	split into their diagonal and off-diagonal parts as follows:
	\[
		A_{1, ii}^{(h)} = \begin{bmatrix}
			A_{1, 2i-1, 2i-1}^{(h + 1)} \\ 
			& A_{1, 2i, 2i}^{(h + 1)} \\ 
		\end{bmatrix} + \begin{bmatrix}
			0 & A_{1,2i-1,2i}^{(h + 1)} \\ 
			A_{1,2i,2i-1}^{(h + 1)} & 0\\ 
		\end{bmatrix}, 
	\]
	and the same holds for $A_{2, jj}^{(h)}$. 
	Then 
	$\delta X^{(h)}_{i,j}$ solves  
	$A_{1,ii}^{(h)} \delta  X^{(h)}_{i,j} + 
	\delta X^{(h)}_{i,j} A_{2,jj}^{(h)} =
	\Xi_{ij}^{(h)}$ where 
	\begin{equation} \label{eq:correction2levels}
	\begin{split}
		\Xi_{ij}^{(h)} := 
		&- \begin{bmatrix}
			0 & A_{1,2i-1,2i}^{(h + 1)} \\ 
			A_{1,2i,2i-1}^{(h + 1)} & 0\\ 
		\end{bmatrix} \begin{bmatrix}
			X_{2i-1, 2i-1}^{(h+1)} & X_{2i-1, 2i}^{(h+1)} \\
			X_{2i, 2i-1}^{(h+1)} & X_{2i, 2i}^{(h+1)} \\
		\end{bmatrix} \\ 
		&- \begin{bmatrix}
			X_{2i-1, 2i-1}^{(h+1)} & X_{2i-1, 2i}^{(h+1)} \\
			X_{2i, 2i-1}^{(h+1)} & X_{2i, 2i}^{(h+1)} \\
		\end{bmatrix} \begin{bmatrix}
			0 & A_{2,2j-1,2j}^{(h + 1)} \\ 
			A_{2,2j,2j-1}^{(h + 1)} & 0\\ 
		\end{bmatrix}.
	\end{split}
	\end{equation}
	Since $X^{(h)}_{ij}$ solves 
	the Sylvester equation 
	\[
		A_{1,ii}^{(h)} X^{(h)}_{ij} + 
		X^{(h)}_{ij} A_{2, jj}^{(h)} = 
		B^{(h)}_{ij}, 
	\]
	then, by rewriting the above equation as a linear system,
	we can bound 
	\[
		\norm{X_{ij}^{(h)}}_F \leq \frac{
			\norm{B_{ij}^{(h)}}_F
		}{\alpha_{1} + \alpha_{2}}. 
	\]
	Applying this relation in \eqref{eq:correction2levels} 
	we get the following bound for the norm of the right-hand side
	$\Xi_{ij}^{(h)}$:
	\begin{align*}
		\norm{\Xi_{ij}^{(h)}}_F &\leq 
		(\beta_{1} + \beta_2) \left\lVert
			\begin{bmatrix}
				X_{2i-1, 2i-1}^{(h+1)} & X_{2i-1, 2i}^{(h+1)} \\
				X_{2i, 2i-1}^{(h+1)} & X_{2i, 2i}^{(h+1)} \\
			\end{bmatrix}
		\right\lVert_F \\ 
		&\leq 
		\frac{\beta_{1} + \beta_2}{\alpha_1 + \alpha_2} 
		\sqrt{
			\norm{B_{2i-1, 2i-1}^{(h+1)}}_F^2 + \norm{B_{2i-1, 2i}^{(h+1)}}_F^2 
			+ \norm{B_{2i, 2i-1}^{(h+1)}}_F^2 + \norm{B_{2i, 2i}^{(h+1)}}_F^2 
		} \\ 
		&= \frac{\beta_{1} + \beta_2}{\alpha_1 + \alpha_2} \norm{B_{ij}^{(h)}}_F.
	\end{align*}
	We define the block matrix $\Xi^{(h)} = [ \Xi_{ij}^{(h)} ]$; collecting all the 
	previous relation as $(i,j)$ varies, we obtain 
	$
	A_{1}^{(h)} \delta X^{(h)} + \delta X^{(h)} A_2^{(h)} = 
	\Xi^{(h)}
	$ and 
	$\norm{\Xi^{(h)}}_F \leq \frac{\beta_{1} + \beta_2}{\alpha_1 + \alpha_2} \norm{B}_F$. 
	
	\subsubsection{Inexact arithmetic}
	
	In a realistic scenario, the Sylvester equations for 
	determining $X^{(\ell)}$ and $\delta X^{(h)}$ are solved
	inexactly. We make the assumption that all Sylvester 
	equations of the form $A_1 X + X A_2 = B$
	are solved with a residual satisfying 
	\begin{equation} \label{eq:sylvester-relative-accuracy}
		A_1 \widetilde{X} + \widetilde{X} A_2 = B + R, \qquad 
		\norm{R}_F \leq \epsilon \norm{B}_F.
	\end{equation}
	Then, the approximate solutions
	computed throughout the recursion verify:
	\begin{align*}
		A_{1, ii}^{(\ell)} \widetilde{X}^{(\ell)}_{i,j} + 
		  \widetilde{X}^{(\ell)}_{i,j} A_{2,jj}^{(\ell)} &= B_{i,j} ^{(\ell)}
		  + R_{ij}^{(\ell)}, \\ 
		  A_{1,ii}^{(h)} \delta \widetilde{X}^{(h)}_{i,j} + 
		  \delta \widetilde{X}^{(h)}_{i,j} A_{2,jj}^{(h)} &=
		  \tilde{\Xi}_{ij}^{(h)} + R_{ij}^{(h)}, 
	\end{align*}
	where $\widetilde{\Xi}_{ij}^{(h)}$ is defined by 
	replacing $X_{ij}^{(h+1)}$ with 
	$\tilde{X}_{ij}^{(h+1)}$ in \eqref{eq:correction2levels}. 
	Thanks to our assumption on the inexact solver, we have that 
	$\norm{R_{ij}^{(\ell)}}_F \leq \epsilon \norm{B_{ij}^{(\ell)}}_F$; 
	bounding $\norm{R_{ij}^{(h)}}_F$ for $h<\ell$, is slightly more challenging, since 
	it depends on the accumulated inexactness. 
	Let us consider the matrices $R^{(h)} = [ R^{(h)}_{ij} ]$
	that correspond to the residuals of the Sylvester equations 
	\begin{align}
		\label{eq:sylv-inexact-0}
		A_{1}^{(\ell)} \widetilde{X}^{(\ell)} + 
		  \widetilde{X}^{(\ell)} A_{2}^{(\ell)} &= B^{(\ell)}
		  + R^{(\ell)}, \\ 
		  \label{eq:sylv-inexact-h}
		  A_{1}^{(h)} \delta \widetilde{X}^{(h)} + 
		  \delta \widetilde{X}^{(h)} A_{2}^{(h)} &=
		  \tilde{\Xi}^{(h)} + R^{(h)}.
	\end{align}
	A bound on $\norm{R^{(h)}}_F$ can be derived
	by controlling the ones of $\tilde{\Xi}^{(h)}$. 
	
	\begin{lemma}
		\label{lem:residuals-rh}
		If the Sylvester equations generated 
		in Algorithm~\ref{alg:dac} are solved with the accuracy 
		prescribed in \eqref{eq:sylvester-relative-accuracy} 
		then $\tilde X^{(h)} := \tilde X^{(\ell)} + 
		\delta \tilde X^{(\ell-1)} + \ldots + \delta \tilde X^{(h)}$ 
		satisfies 
		\begin{equation} \label{eq:tilde-h}
			A_1^{(h)} \tilde X^{(h)} + \tilde X^{(h)}
			A_2^{(h)} = B + R^{(\ell)} + \ldots + R^{(h)}, 
		\end{equation}
		where $R^{(h)}$ are the residuals of \eqref{eq:sylv-inexact-0}
		and \eqref{eq:sylv-inexact-h}.
		In addition, if 
		$\kappa \epsilon < 1$ where $\kappa := \frac{\beta_1 + \beta_2}{\alpha_1 + \alpha_2}$,
		then 
		\[ 
			\norm{R^{(h)}}_F \leq 
			\kappa \epsilon (1 + \epsilon) (1 + \kappa \epsilon)^{\ell-h-1} \norm{B}_F.
		\]
	\end{lemma}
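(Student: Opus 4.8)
The plan is to handle the two assertions separately: first the exact identity \eqref{eq:tilde-h}, by a telescoping induction on the recursion depth $h$, and then the quantitative bound, by extracting from the construction a scalar recursion for the relative residual norms $\norm{R^{(h)}}_F/\norm{B}_F$ and solving it in closed form.

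For the identity I would argue by downward induction on $h$, from $h=\ell$ to $h=0$. Write $A_t^{(h)} = A_t^{(h+1)} + N_t^{(h+1)}$, where $N_t^{(h+1)}$ is the block-diagonal matrix whose $i$-th diagonal block is the off-diagonal part $\left[\begin{smallmatrix} 0 & A_{t,2i-1,2i}^{(h+1)} \\ A_{t,2i,2i-1}^{(h+1)} & 0 \end{smallmatrix}\right]$ of $A_{t,ii}^{(h)}$; with this notation, \eqref{eq:correction2levels} is exactly $\tilde\Xi^{(h)} = -N_1^{(h+1)}\tilde X^{(h+1)} - \tilde X^{(h+1)} N_2^{(h+1)}$. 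The base case $h=\ell$ is \eqref{eq:sylv-inexact-0}, since $B^{(\ell)} = B$ as a matrix. For the inductive step, assume the identity at level $h+1$; using $\tilde X^{(h)} = \tilde X^{(h+1)} + \delta\tilde X^{(h)}$ together with $A_t^{(h)} = A_t^{(h+1)} + N_t^{(h+1)}$, expand
\[
  A_1^{(h)}\tilde X^{(h)} + \tilde X^{(h)} A_2^{(h)}
  = \bigl(A_1^{(h+1)}\tilde X^{(h+1)} + \tilde X^{(h+1)} A_2^{(h+1)}\bigr)
  + \bigl(N_1^{(h+1)}\tilde X^{(h+1)} + \tilde X^{(h+1)} N_2^{(h+1)}\bigr)
  + \bigl(A_1^{(h)}\delta\tilde X^{(h)} + \delta\tilde X^{(h)} A_2^{(h)}\bigr),
\]
then substitute the inductive hypothesis into the first bracket and \eqref{eq:sylv-inexact-h} into the third, and observe that the second bracket cancels against $\tilde\Xi^{(h)}$. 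What survives is $B + R^{(\ell)} + \dots + R^{(h)}$. This cancellation is exactly the ``low-rank update'' mechanism on which the whole scheme rests, and it is really the only conceptual point — everything else is bookkeeping with the block partitions.

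For the residual bound, set $\rho_h := \norm{R^{(h)}}_F / \norm{B}_F$ (the case $B=0$ being trivial). The accuracy assumption \eqref{eq:sylvester-relative-accuracy}, applied blockwise to \eqref{eq:sylv-inexact-0} and summed in squares, gives $\rho_\ell \le \epsilon$. For $h<\ell$, the same assumption applied blockwise to \eqref{eq:sylv-inexact-h} gives $\norm{R^{(h)}}_F \le \epsilon\,\norm{\tilde\Xi^{(h)}}_F$; rerunning the block-by-block Frobenius estimate of Section~\ref{sec:d=2-exact-equations} with $\tilde X^{(h+1)}$ in place of $X^{(h+1)}$ yields $\norm{\tilde\Xi^{(h)}}_F \le (\beta_1+\beta_2)\,\norm{\tilde X^{(h+1)}}_F$; and, by \eqref{eq:tilde-h} at level $h+1$, $\tilde X^{(h+1)}$ solves a Sylvester equation whose coefficients $A_t^{(h+1)}$ are SPD with spectra in $[\alpha_t,\beta_t]$ (principal submatrices of $A_t$), so $\norm{\tilde X^{(h+1)}}_F \le (\alpha_1+\alpha_2)^{-1}\norm{B + R^{(\ell)} + \dots + R^{(h+1)}}_F$. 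Chaining these three inequalities with the triangle inequality produces
\[
  \rho_h \le \kappa\,\epsilon\Bigl(1 + \sum_{k=h+1}^{\ell}\rho_k\Bigr), \qquad 0 \le h < \ell, \qquad \kappa = \frac{\beta_1+\beta_2}{\alpha_1+\alpha_2}.
\]

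It remains to solve this recursion. Setting $\sigma_h := 1 + \sum_{k=h}^{\ell}\rho_k$, one has $\sigma_\ell \le 1 + \epsilon$ and, for $h<\ell$, $\sigma_h = \rho_h + \sigma_{h+1} \le (1+\kappa\epsilon)\,\sigma_{h+1}$, hence $\sigma_h \le (1+\epsilon)(1+\kappa\epsilon)^{\ell-h}$ and therefore $\rho_h \le \kappa\epsilon\,\sigma_{h+1} \le \kappa\epsilon(1+\epsilon)(1+\kappa\epsilon)^{\ell-h-1}$ for every $h<\ell$. For $h=\ell$ the asserted bound reduces to $\epsilon \le \kappa\epsilon(1+\epsilon)/(1+\kappa\epsilon)$, which holds because $\kappa \ge 1$ (as $\beta_t \ge \alpha_t$), so the estimate is valid uniformly in $h$. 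The hypothesis $\kappa\epsilon < 1$ plays no role in the derivation itself; it is what keeps the amplification factor $(1+\kappa\epsilon)^{\ell-h-1}$ from growing geometrically, i.e.\ what makes the bound useful in the sequel. I do not foresee any serious obstacle: the delicate point is simply matching up the block partitions of $A_t^{(h)}$, $\tilde X^{(h)}$ and $\tilde\Xi^{(h)}$ so that the cancellation in the identity is genuinely exact, after which the scalar recursion is routine.
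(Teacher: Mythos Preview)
Your proposal is correct and follows essentially the same approach as the paper: downward induction for the telescoping identity, then the recursion $\rho_h \le \kappa\epsilon\bigl(1+\sum_{k=h+1}^{\ell}\rho_k\bigr)$ solved in closed form. Your derivation is in fact slightly more streamlined in two places---you bound $\norm{\tilde X^{(h+1)}}_F$ directly from the just-established identity \eqref{eq:tilde-h} rather than splitting into exact-plus-error as the paper does, and you solve the recursion via the auxiliary sequence $\sigma_h$ instead of the paper's explicit geometric-series induction---but these are stylistic tidyings of the same argument, not a different route.
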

	
	\begin{proof}
		We start with the proof of \eqref{eq:tilde-h} by induction over $h$. 
		For $h = \ell$, the claim follows by \eqref{eq:sylv-inexact-0}. 
		If $h < \ell$, we decompose $\tilde X^{(h)} = \tilde X^{(h+1)} + 
		\delta \tilde X^{(h)}$ to obtain 
		\begin{align*}
			A_1^{(h)} \tilde X^{(h)} + \tilde X^{(h)} A_2^{(h)}
			&= 
			A_1^{(h)} \tilde X^{(h+1)} + \tilde X^{(h+1)} A_2^{(h)}
			+ A_1^{(h)} \delta \tilde X^{(h)} + \delta \tilde X^{(h)} A_2^{(h)} \\ 
			&= \underbrace{ ( A_1^{(h)} - A_1^{(h+1)} ) \tilde X^{(h+1)} + 
			\tilde X^{(h+1)} ( A_2^{(h)} - A_2^{(h+1)} )}_{-\widetilde{\Xi}^{(h)}} \\ 
			&+ A_1^{(h+1)} \tilde X^{(h+1)} + \tilde X^{(h+1)} A_2^{(h+1)}  
			+ \widetilde{\Xi}^{(h)} + R^{(h)}, 
		\end{align*}
		and the claim follows by the induction step. 
	
		We now show the second claim, once again, by induction. For $h = \ell$, we obtain 
		the result by collecting all the residuals together in a block 
		matrix:
		\[
			\norm{R_{ij}^{(\ell)}}_F \leq \epsilon \norm{B_{ij}^{(\ell)}}_F
			\implies 
			\norm{R^{(\ell)}}_F \leq \epsilon \norm{B^{(\ell)}}_F. 
		\]
		Since $\kappa \geq 1$, 
		we have $\epsilon \leq \kappa \epsilon (1 + \epsilon) (1 + \kappa \epsilon)^{-1}$, 
		so that the bound is satisfied. 
		For $h < \ell$ we have 
		\begin{align*}
			\norm{R^{(h)}}_F &\leq \epsilon \norm{\tilde \Xi^{(h)}}_F  
			\leq \epsilon (\beta_1 + \beta_2) \norm{\tilde {X}^{(h+1)}}_F\\
			&\leq \epsilon (\beta_1 + \beta_2) \norm{{X}^{(h+1)}}_F + 
			\epsilon (\beta_1 + \beta_2) \norm{\tilde {X}^{(h+1)} - X^{(h+1)}}_F. 
		\end{align*}
		By subtracting $A_1^{(h+1)} X^{(h+1)} + X^{(h+1)} A_2^{(h+1)} = B$ from 
		\eqref{eq:tilde-h} we obtain 
		\[
			A_1^{(h+1)} (\tilde {X}^{(h+1)} - X^{(h+1)}) + 
			(\tilde {X}^{(h+1)} - X^{(h+1)}) A_2^{(h+1)} = 
			R^{(\ell)} + \ldots + R^{(h+1)}. 
		\]
		Bounding the norm of the solution of this Sylvester equations by 
		$\frac{1}{\alpha_1 + \alpha_2}$ times the norm of the right-hand side yields 
		\[
			\norm{R^{(h)}}_F \leq \kappa \epsilon \left(
				\norm{B}_F + \sum_{j = h+1}^{\ell} \norm{R^{(j)}}_F
			\right). 
		\]
		For $h < \ell$, by the induction step,  we have 
		\begin{align*}
			\norm{R^{(h)}}_F &\leq \kappa \epsilon \left(
				\norm{B}_F + \norm{R^{(\ell)}}_F + \sum_{j = h+1}^{\ell-1} \norm{R^{(j)}}_F
			\right)  \\ 
			&\leq \kappa \epsilon \left(
				1 + \epsilon + 
				  \kappa \epsilon (1 + \epsilon) \sum_{j = h+1}^{\ell-1} 
				  (1 + \kappa \epsilon)^{\ell-j-1}
			\right) \norm{B}_F \\
			&= \kappa \epsilon (1 + \epsilon) \left(1 - \kappa \epsilon \frac{1 - (1 + \kappa \epsilon)^{\ell-h-1}}{\kappa \epsilon}\right) \norm{B}_F \\ 
			&= \kappa \epsilon (1 + \epsilon) (1 + \kappa \epsilon)^{\ell-h-1} \norm{B}_F. 
		\end{align*}
	\end{proof}
	
	We can leverage the previous result to bound the residual of the 
	approximate solution $\tilde X$ returned by 
	Algorithm~\ref{alg:dac2d-balanced}.
	
	\begin{lemma} \label{lem:residual}
		Under the assumptions of Lemma~\ref{lem:residuals-rh}, 
		with the additional constraint $\kappa \epsilon < \frac{2}{\ell}$
		the 
		approximate solution $\tilde{X} := \tilde{X}^{(0)}$ returned by 
		Algorithm~\ref{alg:dac} satisfies 
		\[
			\norm{A_1 \tilde{X} + \tilde{X} A_2 - B}_F 
			\leq (\ell + 1)^2 \kappa \epsilon \norm{B}_F.
		\]
	\end{lemma}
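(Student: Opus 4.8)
The plan is to combine the structural identity \eqref{eq:tilde-h} with $h = 0$ together with the geometric bound on $\norm{R^{(h)}}_F$ from Lemma~\ref{lem:residuals-rh}. Recall that $\tilde X = \tilde X^{(0)}$ solves $A_1^{(0)} \tilde X + \tilde X A_2^{(0)} = B + R^{(\ell)} + \ldots + R^{(0)}$, and that $A_t^{(0)} = A_t$ by definition, so the residual of the returned approximation is exactly $R^{(\ell)} + \ldots + R^{(0)}$. Hence by the triangle inequality
\[
	\norm{A_1 \tilde X + \tilde X A_2 - B}_F \leq \sum_{h=0}^{\ell} \norm{R^{(h)}}_F
	\leq \kappa \epsilon (1 + \epsilon) \sum_{h=0}^{\ell} (1 + \kappa \epsilon)^{\ell - h - 1} \norm{B}_F,
\]
where for $h = \ell$ the exponent is $-1$; since $\kappa \epsilon < 1$ we have $(1+\kappa\epsilon)^{-1} < 1$, so the $h=\ell$ term is at most $\kappa\epsilon(1+\epsilon)\norm B_F$, and we may safely replace all exponents by the range $0, 1, \ldots, \ell-1$ at the cost of an extra term, giving a sum of $\ell+1$ terms each bounded by $\kappa\epsilon(1+\epsilon)(1+\kappa\epsilon)^{\ell-1}\norm B_F$.

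The remaining task is to show that $(\ell+1)\,\kappa\epsilon(1+\epsilon)(1+\kappa\epsilon)^{\ell-1} \leq (\ell+1)^2 \kappa \epsilon$, i.e. that $(1+\epsilon)(1+\kappa\epsilon)^{\ell-1} \leq \ell+1$. This is where the extra hypothesis $\kappa\epsilon < 2/\ell$ enters. Since $\epsilon \leq \kappa\epsilon < 2/\ell \leq 2$, I would bound $(1+\epsilon)(1+\kappa\epsilon)^{\ell-1} \leq (1+\kappa\epsilon)^{\ell} \leq e^{\kappa\epsilon\,\ell} < e^{2}$, and then check $e^2 < \ell + 1$ only for $\ell \geq 7$ or so; for the finitely many small values of $\ell$ one verifies the inequality $(1+\kappa\epsilon)^{\ell} \leq \ell+1$ directly using $\kappa\epsilon < 2/\ell$ (for instance via $(1 + 2/\ell)^\ell$ being increasing in $\ell$ and bounded, or via Bernoulli-type estimates). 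A cleaner uniform route is: $(1+\kappa\epsilon)^{\ell-1} \le (1 + 2/\ell)^{\ell-1} \le (1+2/\ell)^{\ell}$, and one shows $(1+2/\ell)^{\ell} \le \ell+1$ for all $\ell \ge 1$ by a short monotonicity/induction argument, since the left side is bounded (by $e^2 \approx 7.39$) while the right side grows without bound, and one checks the base cases by hand.

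The main obstacle is purely the elementary estimate $(1+\epsilon)(1+\kappa\epsilon)^{\ell-1}\le \ell+1$: the exponential factor $(1+\kappa\epsilon)^{\ell-1}$ coming from the accumulated inexactness across the $\ell$ recursion levels must be tamed by the constraint $\kappa\epsilon < 2/\ell$, which keeps the exponent $\ell\cdot\kappa\epsilon$ bounded by a constant. Everything else — the additive splitting of the residual into the $R^{(h)}$, and the per-level bound — is already supplied by Lemma~\ref{lem:residuals-rh}, so the proof is a short assembly followed by this one scalar inequality.
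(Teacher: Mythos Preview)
Your overall strategy---invoke \eqref{eq:tilde-h} at $h=0$ and sum the bounds on $\norm{R^{(h)}}_F$ from Lemma~\ref{lem:residuals-rh}---matches the paper exactly. The gap is in the final scalar estimate. By replacing the geometric sum with $(\ell+1)$ copies of its largest term you are left needing $(1+\epsilon)(1+\kappa\epsilon)^{\ell-1}\le \ell+1$, and this inequality is \emph{false} for small $\ell$ under the stated hypotheses. For instance, take $\ell=2$, $\kappa=1.01$, $\epsilon=0.95$: then $\kappa\epsilon\approx 0.96<1=2/\ell$, yet $(1+\epsilon)(1+\kappa\epsilon)=1.95\times 1.96\approx 3.82>3$. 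Your fallback ``show $(1+2/\ell)^{\ell}\le \ell+1$ by hand'' also fails: for $\ell=2$ this reads $4\le 3$. So the over-bounding step genuinely loses too much.

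The paper avoids this by summing the geometric series exactly. Writing
\[
\sum_{h=0}^{\ell}\norm{R^{(h)}}_F\;\le\;\frac{\kappa\epsilon(1+\epsilon)}{1+\kappa\epsilon}\sum_{h=0}^{\ell}(1+\kappa\epsilon)^{\ell-h}\norm{B}_F\;\le\;\bigl[(1+\kappa\epsilon)^{\ell+1}-1\bigr]\norm{B}_F,
\]
(using $(1+\epsilon)/(1+\kappa\epsilon)\le 1$), and then expanding $(1+\kappa\epsilon)^{\ell+1}-1=\sum_{h=1}^{\ell+1}\binom{\ell+1}{h}(\kappa\epsilon)^{h}$. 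The constraint $\kappa\epsilon<2/\ell$ is precisely what makes the ratio of consecutive terms $\frac{\ell+1-h}{h+1}\kappa\epsilon<1$ for all $h\ge 1$, so every term is bounded by the first one, $(\ell+1)\kappa\epsilon$, and there are $\ell+1$ of them. This gives $(\ell+1)^{2}\kappa\epsilon$ directly, with no case analysis on $\ell$. Replacing your max-term bound with this exact summation and binomial argument closes the gap.
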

	
	\begin{proof}
		In view of Lemma~\ref{lem:residuals-rh} the residual 
		associated with $\tilde X = \tilde{X}^{(0)}$ satisfies
	 $$
	 \norm{A_1 \widetilde{X}^{(0)} + \widetilde{X}^{(0)} A_2 - B}_F 
	 \leq 
	 \norm{R^{(0)}}_F+ \norm{R^{(1)}}_F+\dots+\norm{R^{(\ell)}}_F. 
	 $$
	 Hence, summing the upper bounds for $ \norm{R^{(h)}}_F$ given 
	 in Lemma~\ref{lem:residuals-rh} we obtain 
	 \begin{align*}
		\norm{A_1 \widetilde{X}^{(0)} + 
		\widetilde{X}^{(0)} A_2 - B}_F 
		&\leq \left( \frac{\kappa \epsilon (1 + \epsilon)}{1 + \kappa \epsilon} 
		  \sum_{h = 0}^{\ell} (1 + \kappa \epsilon)^{\ell-h} \right) \norm{B}_F \\ 
		&\leq 
		  \left[ (1 + \kappa \epsilon)^{\ell + 1} - 1 \right] 
		  \norm{B}_F = 
		\left[ 
			\sum_{h = 1}^{\ell + 1} \binom{\ell + 1}{h} (\kappa \epsilon)^h
		\right] \norm{B}_F. 
	 \end{align*}
	 The assumption $\kappa \epsilon < \frac{2}{\ell}$ guarantees that the 
	 dominant term in the sum occurs for $h = 1$, and therefore we have 
	 \[
		\norm{A_1 \widetilde{X}^{(\ell)} + 
		\widetilde{X}^{(\ell)} A_2 - B}_F 
		\leq
		(\ell + 1)^2 \kappa \epsilon \norm{B}_F. 
	 \]
	\end{proof}
	
	\subsection{Unbalanced dimensions}
	\label{sec:unbalanced-2d}
	
	In the general case, when $n_1 \gg n_2$ and we employ the same 
	$n_{\min}$ for both cluster trees of $A_1$ and $A_2$, we end up 
	with $\ell_1 > \ell_2$. We can artificially obtain $\ell_1 = \ell_2$ 
	by adding $\ell_1 - \ell_2$ auxiliary levels on top of the cluster tree
	of $A_2$. In all these new levels, we consider the trivial 
	partitioning $\{ 1, \ldots, n_2 \} = \{ 1, \ldots, n_2 \} \cup \emptyset$. 
	
	This choice implies that for the first $\ell_1 - \ell_2$ 
	levels of the recursion only the first dimension is split, so that only $2$ 
	matrix equations are generated by these recursive calls. This allows us to 
	extend all the results in Section~\ref{sec:error-analysis-2d} by setting 
	$\ell = \max\{ \ell_1, \ell_2 \}$.
	
	In the practical implementation, for $n_1 \geq n_2$, 
	this approach is encoded in the following steps:
	\begin{itemize}
		\item As far as $n_1$ is significantly larger than $n_2$, e.g. $n_1\geq 2n_2$, 
		apply the splitting on the first mode only. 
		\item When $n_1$ and $n_2$ are almost equal, 
		  apply Algorithm~\ref{alg:dac2d-balanced}. 
	\end{itemize}
	The pseudocode describing this approach is given in Algorithm~\ref{alg:dac2d}.
		
	\begin{algorithm}[h] 
		\small 
		\caption{}\label{alg:dac2d}
		\begin{algorithmic}[1] 
			\Procedure{lyap2d\_d\&c}{$A_1,A_2, B, \epsilon$}
			\If{$\max_i n_i\leq n_{\min}$}
			\State\Return\Call{lyapnd\_diag}{$A_1,A_2, B$}
			\ElsIf{$n_1 \leq 2n_2$ and $n_2 \leq 2n_1$}
				\State \Return \Call{lyap2d\_d\&c\_balanced}{$A_1$, $A_2$, $B$, $\epsilon$}
				\Comment{Algorithm~\ref{alg:dac2d-balanced}}
			\Else 
			\If{$n_1 > 2n_2$}
				\State Partition $B$ as $\left[ \begin{smallmatrix}
					B_1 \\ B_2 
				\end{smallmatrix} \right]$, according to the 
				partitioning in $A_1^{(1)}$
				\State $X_1 \gets$ \Call{lyap2d\_d\&c}{$A_{1,11}^{(1)}, A_2, B_1$},
				 \ $X_2 \gets$ \Call{lyap2d\_d\&c}{$A_{1,22}^{(1)}, A_2, B_2$} 
				\State $X^{(1)} \gets \left[
					\begin{smallmatrix}
						X_1 \\ X_2 
					\end{smallmatrix}
				\right]$
				\State Retrieve a low-rank factorization of 
				  $A_1^{\mathrm{off}} X^{(1)} = UV^T$ \label{lin:mult2d}
			\ElsIf{$n_2 > 2n_1$}
				\State Partition $B$ as $\left[ \begin{smallmatrix}
					B_1 & B_2 
				\end{smallmatrix} \right]$, according to the 
				partitioning in $A_2^{(1)}$
				\State $X_1 \gets$ \Call{lyap2d\_d\&c}{$A_{1}, A_{2,11}^{(1)}, B_1$}, \ 
				$X_2 \gets$ \Call{lyap2d\_d\&c}{$A_1, A_{2,22}^{(1)}, B_2$} 
				\State $X^{(1)} \gets \left[
					\begin{smallmatrix}
						X_1 & X_2 
					\end{smallmatrix}
				\right]$
				\State Retrieve a low-rank factorization of 
				  $X^{(1)} A_2^{\mathrm{off}} = UV^T$
			\EndIf
			\State $\delta X\gets \Call{low\_rank\_sylv}{A_1, A_2, U, V, \epsilon}$
			\State\Return $X^{(1)}+\delta X$
			\EndIf
			\EndProcedure
		\end{algorithmic}
	\end{algorithm}

	\subsection{Solving the update equation}\label{sec:low-rank}
	The update equation \eqref{eq:2dcase} is 
	of the form 
	\begin{equation}\label{eq:lowrank-sylv}
	A_1 \delta X+\delta XA_2 =UV^*
	\end{equation}
	where $A_1, A_2$ are positive definite matrices with spectra
	contained in $[\alpha_1,\beta_1]$ and $[\alpha_2, \beta_2]$, respectively, 
	$U\in\mathbb R^{m\times k}$, $V\in\mathbb R^{n\times k}$ with 
	$k\ll \min\{m,n\}$. Under these assumptions, the singular values
	$\sigma_j(\delta X)$ of the solution $\delta X$ of  
	\eqref{eq:lowrank-sylv} decay rapidly to zero \cite{beckermann2017,penzl2000eigenvalue}. 
	More specifically, it holds
	$$
	\sigma_{1+jk}(\delta X)\leq \sigma_1(\delta X)Z_j([\alpha_1,\beta_2], [-\beta_2, -\alpha_2]),\qquad Z_j(E, F):=\min_{r(z)\in\mathcal R_{j,j}}\frac{\max_E |r(z)|}{\min_F|r(z)|},
	$$
	where $\mathcal R_{j,j}$ is the set of rational functions of the form
	$r(z)=p(z)/q(z)$ having both numerator and denominator of degree at most $j$. 
	The optimization problem associated with $Z_j(E,F)$ is known in the literature as 
	\emph{third Zolotarev problem} and explicit estimates for the decay rate  of $Z_j(E,F)$, as $j$ increases, 
	are available when $E$ and $F$ are disjoint real intervals \cite{beckermann2017}. 
	In particular, 
	we will make use of the following result. 
	\begin{lemma}[\protect{\cite[Corollary 4.2]{beckermann2017}}] \label{lem:zol}
		Let $E= [\alpha_1,\beta_1]\subset \mathbb R^+$, $F=[-\beta_2, -\alpha_2]\subset\mathbb R^-$ be non-empty 
		real intervals, then
		\begin{equation}\label{eq:zol}
		Z_j(E,F)\leq 4\exp\left(\frac{\pi^2}{2\log(16\gamma)}
		\right)^{-2j},\qquad \gamma:=\frac{
			(\alpha_1+\beta_2)(\alpha_2+\beta_1)
		}{
			(\alpha_1+\alpha_2)(\beta_1+\beta_2)
		}.
		\end{equation}
		\end{lemma}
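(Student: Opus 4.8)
The plan is to deduce \eqref{eq:zol} from Zolotarev's classical closed-form solution of the third problem on two symmetric real intervals, after reducing the general configuration to that model case via the Möbius invariance of $Z_j$.

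First I would record the invariance: for a Möbius transformation $\phi$ of $\widehat{\mathbb C}$, the substitution $r\mapsto r\circ\phi$ is a bijection of $\mathcal R_{j,j}$ onto itself — since $\phi$ has degree one, composition preserves the degrees of numerator and denominator — and it satisfies $\max_{\phi(E)}|r|=\max_E|r\circ\phi|$, $\min_{\phi(F)}|r|=\min_F|r\circ\phi|$; hence $Z_j(\phi(E),\phi(F))=Z_j(E,F)$, and replacing $r$ by $1/r$ gives $Z_j(E,F)=Z_j(F,E)$. Consequently $Z_j(E,F)$ depends only on the cross-ratio of the four endpoints. A short expansion gives
\[
  (\alpha_1+\beta_2)(\alpha_2+\beta_1)-(\alpha_1+\alpha_2)(\beta_1+\beta_2)=(\beta_1-\alpha_1)(\beta_2-\alpha_2)\ge 0,
\]
so $\gamma\ge 1$, and in fact $\gamma$ is exactly the cross-ratio of the ordered endpoints $-\beta_2<-\alpha_2<\alpha_1<\beta_1$. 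Since the cross-ratio of $-1<-\ell<\ell<1$ equals $(1+\ell)^2/(4\ell)$, which decreases from $+\infty$ to $1$ as $\ell$ ranges over $(0,1]$, I would take the unique $\ell\in(0,1]$ with $(1+\ell)^2/(4\ell)=\gamma$, i.e.\ $\ell=2\gamma-1-2\sqrt{\gamma(\gamma-1)}$ (so $\ell\sim(4\gamma)^{-1}$ as $\gamma\to\infty$), and conclude $Z_j(E,F)=Z_j([-1,-\ell],[\ell,1])$.

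For the symmetric model case I would invoke Zolotarev's theorem: the extremal $r\in\mathcal R_{j,j}$ is given explicitly through Jacobi elliptic functions of modulus $\ell$, and its closed form (a ratio of theta-type products) yields, using monotonicity of $Z_j$ in $j$ to absorb the lower-order factors into the constant,
\[
  Z_j\bigl([-1,-\ell],[\ell,1]\bigr)\le 4\,\rho^{-2j},\qquad
  \rho=\exp\!\left(\frac{\pi\,K(\ell)}{K(\ell')}\right)=\exp\!\left(\frac{\pi^2}{2\,\mu(\ell)}\right),
\]
where $K$ is the complete elliptic integral of the first kind, $\ell'=\sqrt{1-\ell^2}$, and $\mu(\ell)=\tfrac{\pi}{2}K(\ell')/K(\ell)$ is the Grötzsch modulus function. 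Finally I would apply the classical estimate $\mu(\ell)<\log(4/\ell)$ together with $4/\ell\le 4(1+\ell)^2/\ell=16\gamma$ to get $\rho\ge\exp\bigl(\pi^2/(2\log(16\gamma))\bigr)$, and substitute into the displayed bound to obtain \eqref{eq:zol}.

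I expect the real work to be concentrated in the third step: correctly recalling Zolotarev's elliptic-function description of the extremal rational function on two symmetric intervals and extracting from it the sharp rate $\rho$ with the stated constant. This is classical — it goes back to Zolotarev, with modern treatments phrased through the conformal modulus of the ring domain $\widehat{\mathbb C}\setminus(E\cup F)$ — but it is delicate because of the competing normalizations for elliptic integrals and for the modulus function; by contrast, the cross-ratio bookkeeping of the second step and the elementary bound $\mu(\ell)<\log(4/\ell)$ used at the end are routine.
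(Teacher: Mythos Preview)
The paper does not supply a proof of this lemma: it is quoted verbatim as \cite[Corollary~4.2]{beckermann2017} and used as a black box, so there is no ``paper's own proof'' to compare against. Your outline is the standard route to that corollary --- reduce to the symmetric configuration $[-1,-\ell]\cup[\ell,1]$ via M\"obius invariance of $Z_j$, invoke Zolotarev's explicit elliptic-function solution to get the rate $\rho=\exp(\pi K(\ell)/K(\ell'))$, and then relax $\mu(\ell)$ to $\log(16\gamma)$ using $\mu(\ell)<\log(4/\ell)$ and $4/\ell\le 16\gamma$. The algebra you display (the cross-ratio identity and the check that $\gamma\ge 1$) is correct. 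The only genuinely nontrivial ingredient you leave as a citation is the inequality $Z_j([-1,-\ell],[\ell,1])\le 4\rho^{-2j}$ with this precise constant and exponent; that is exactly what Beckermann and Townsend extract from Zolotarev's formula, so your sketch faithfully reconstructs the argument behind the cited result rather than offering an alternative.
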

	Lemma~\ref{lem:zol} guarantees the existence of accurate low-rank approximations 
	of $\delta X$. In this setting, the extremal rational function for 
	$Z_j(E,F)$ is explicitly known and close form expressions for its zeros and 
	poles are available.\footnote{Given $E= [\alpha_1,\beta_1]\subset \mathbb R^+$, 
		$F=[-\beta_2, -\alpha_2]\subset\mathbb R^-$
		an expression in terms of elliptic functions 
		for the zeros and poles 
		of the extremal rational function is given  
		in \cite[Eq. (12)]{beckermann2017}. Our implementation is based on 
		the latter.} We will see in the next sections that this enables 
	us to design approximation methods whose convergence rate matches the one
	 in \eqref{eq:zol}.

	\subsubsection{A further property of Zolotarev functions}
	\label{sec:zolotarev-properties}
	We now make an observation that will be relevant for the 
	error analysis in Section~\ref{sec:tensors}. 
	
	Consider the Zolotarev problem associated with
	the symmetric configuration $[\alpha, \beta] \cup [-\beta, -\alpha]$, 
	and let us indicate with $p_1, \ldots, p_s$ and $q_1, \ldots, q_s$ the 
	zeros and poles of the optimal Zolotarev rational function. The symmetry 
	of the configuration yields $p_i = -q_i$, and in turn 
	the bound $|z - p_i| / |z - q_i| \leq 1$ for all 
	$z \in [\alpha, \beta]$. 
	
	The last inequality also holds for nonsymmetric spectra configurations 
	$[\alpha_1, \beta_1] \cup [-\beta_2, -\alpha_2]$. Indeed, the 
	minimax problem is invariant under M\"obius transformations, and 
	the property holds for the evaluations of rational functions on any 
	transformed domains. Since the optimal rational Zolotarev 
	function on $[\alpha_1, \beta_1] \cup [-\beta_2, -\alpha_2]$ can be 
	obtained by remapping the configuration into a symmetric one, 
	the inequality holds on $[\alpha_1,\beta_1]$.

	\subsubsection{Alternating direction implicit method}
	The ADI method iteratively approximates the solution
	of \eqref{eq:lowrank-sylv} with the following two-steps scheme, 
	for given scalar parameters $p_j, q_j \in \mathbb C$:
	\begin{align*}
	(A_1-q_{s+1}I)\delta X_{s+\frac 12}&= UV^*-\delta X_s(A_2+q_{s+1}I),\\
	\delta X_{s+1}(A_2+p_{s+1}I)&=UV^*-(A_1-p_{j+1}I)\delta X_{s+\frac 12}.
	\end{align*}
	The initial guess is $\delta X_0=0$, and it is easy to see that
	$\rank(\delta X_s)\leq sk$. 
	This property is exploited in a specialized version of ADI, which is 
	called \emph{factored ADI} (fADI) \cite{benner2009adi}; the latter 
	computes a factorized form of the update
	$\Delta_s := \delta X_s-\delta X_{s-1} = (q_s - p_s) W_s Y_s^*$ 
	with the following recursion: 
	\begin{align} \label{eq:adi-def}
		\begin{cases}
		  W_{1} = (A_1 - q_1 I)^{-1} U \\ 
		  W_{j+1} = (A_1 - q_{j+1})^{-1} (A_1 - p_{j}) W_j \\ 
		\end{cases} \quad 
		\begin{cases}
			Y_{1} = -(A_2 + p_1 I)^{-1} V \\ 
			Y_{j+1} = (A_2 + p_{j+1})^{-1} (A_2 + q_{j}) Y_j \\ 
		  \end{cases}.
	\end{align}
	The approximate solution $\delta X_s$ after $s$ steps of fADI takes 
	the form 
	\[
		\delta X_s = \Delta_1 + \ldots + \Delta_s = \sum_{j = 1}^s (q_j - p_j) W_j Y_j^*.
	\]
	Observe that, the most expensive operations when executing $s$ steps of fADI are the solution of 
	$s$ shifted linear systems with the matrix $A_1$ and the same 
	amount with the matrix $A_2$. Moreover, the two sequences 
	can be generated independently. 
	
	The choice of the parameters $p_j, q_j$ is crucial to control the 
	convergence of the method, as highlighted by the explicit expressions 
	of the residual and the approximation error after $s$ steps
	\cite{benner2009adi}:
	\begin{align}
		\label{eq:err-adi-full}
		\delta X - \delta X_s &= r_s(A_1) \delta X r_s(-A_2)^{-1}, \\ 
		\label{eq:res-adi-full}
		A_1\delta X_s + \delta X_s A_2 - UV^* &= 
		-r_s(A_1) UV^* r_s(-A_2)^{-1}, 
	\end{align}
	where $r_s(z) = \prod_{j=1}^s\frac{z-p_j}{z-q_j}$
	and the second identity is obtained by applying the operator 
	$X \mapsto A_1 X + XA_2$ to $\delta X_s - \delta X$.
	Taking norms yields the following upper bound for the approximation error:
	\begin{align}
		\label{eq:err-adi}
	\norm{\delta X_s - \delta X}_F &\leq \frac{
		\max_{z\in[\alpha_1,\beta_1]}|r(z)|
	}{
		\min_{z\in[-\beta_2,-\alpha_2]}|r(z)|
	}
	\norm{\delta X}_F,  
	\\
	\label{eq:res-adi}
	\norm{A_1 \delta X_s - \delta X A_2 - UV^*}_F &
	  \leq \frac{\max_{z\in[\alpha_1,\beta_1]}|r(z)|}{\min_{z\in[-\beta_2,-\alpha_2]}|r(z)|}\norm{UV^*}_F, 
	\end{align}
	Inequalities \eqref{eq:err-adi} and 
	\eqref{eq:res-adi} guarantee
	that if the shift parameters $p_j,q_j$ are chosen as the zeros and poles
	of the extremal rational function for 
	$Z_s([\alpha_1,\beta_1], [-\beta_2,-\alpha_2])$, 
	then the approximation error and the residual norm decay 
	(at least) as prescribed by Lemma~\ref{lem:zol}. This allows us to 
	give an a priori bound to the number of steps required to
	achieve a target accuracy $\epsilon$. 
	\begin{lemma} \label{lem:adi-res}
		Let $\epsilon> 0$, $\delta X$ be the solution of \eqref{eq:lowrank-sylv} 
		and $\delta X_s$ the solution returned by applying the 
		fADI method to \eqref{eq:lowrank-sylv} with parameters 
		chosen as the zeros and poles of the extremal rational 
		function for $Z_s([\alpha_1,\beta_1],[-\beta_2,-\alpha_2])$. If 
		\begin{equation}\label{eq:adi-s}
		s\geq \frac{1}{\pi^2}\log\left(\frac{4}{\epsilon}\right)\log\left(
			16\frac{(\alpha_1+\beta_2)(\alpha_2+\beta_1)}{(\alpha_1+\alpha_2)(\beta_1+\beta_2)}
		\right),
		\end{equation}
		then
		\begin{align*}
					 \norm{A_1\delta X_s+\delta X_s A_2-UV^*}_F&\leq \epsilon \norm{UV^*}_F.
		\end{align*}
		\end{lemma}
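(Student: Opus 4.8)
The plan is to combine the residual identity \eqref{eq:res-adi} with the Zolotarev estimate of Lemma~\ref{lem:zol}. First I would recall that, by \eqref{eq:res-adi}, with $r = r_s$ the rational function whose zeros and poles are exactly the $p_j, q_j$, one has
\[
  \norm{A_1 \delta X_s + \delta X_s A_2 - UV^*}_F \leq \frac{\max_{z\in[\alpha_1,\beta_1]}|r_s(z)|}{\min_{z\in[-\beta_2,-\alpha_2]}|r_s(z)|}\, \norm{UV^*}_F.
\]
Since we have chosen $p_j, q_j$ to be the zeros and poles of the extremal rational function for $Z_s([\alpha_1,\beta_1],[-\beta_2,-\alpha_2])$, the ratio on the right is exactly $Z_s([\alpha_1,\beta_1],[-\beta_2,-\alpha_2])$ by definition of the third Zolotarev problem. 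Hence the residual is bounded by $Z_s([\alpha_1,\beta_1],[-\beta_2,-\alpha_2])\,\norm{UV^*}_F$.

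Next I would invoke Lemma~\ref{lem:zol} with $E = [\alpha_1,\beta_1]$ and $F = [-\beta_2,-\alpha_2]$, which gives
\[
  Z_s(E,F) \leq 4 \exp\left(\frac{\pi^2}{2\log(16\gamma)}\right)^{-2s}, \qquad \gamma = \frac{(\alpha_1+\beta_2)(\alpha_2+\beta_1)}{(\alpha_1+\alpha_2)(\beta_1+\beta_2)}.
\]
It then suffices to check that the stated lower bound on $s$ forces the right-hand side to be at most $\epsilon$. Writing the bound as $4\exp\left(-\tfrac{\pi^2 s}{\log(16\gamma)}\right)$, the condition $4\exp\left(-\tfrac{\pi^2 s}{\log(16\gamma)}\right) \leq \epsilon$ is equivalent to $\tfrac{\pi^2 s}{\log(16\gamma)} \geq \log(4/\epsilon)$, i.e. $s \geq \tfrac{1}{\pi^2}\log(4/\epsilon)\log(16\gamma)$, which is precisely \eqref{eq:adi-s}. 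Substituting back gives $Z_s(E,F)\leq \epsilon$ and therefore the claimed residual bound.

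There is essentially no hard step here: the proof is a two-line chain once the identification of the shift ratio with $Z_s$ is made explicit. The only point requiring a word of care is that the parameters are real (the configuration is a pair of real intervals), so the extremal rational function is real with real zeros and poles, and the min/max in \eqref{eq:res-adi} are genuinely attained and equal to $Z_s(E,F)$; this is guaranteed by the theory of the third Zolotarev problem referenced before Lemma~\ref{lem:zol}. I would also note in passing that the same argument applied to \eqref{eq:err-adi} yields the analogous bound $\norm{\delta X_s - \delta X}_F \leq \epsilon \norm{\delta X}_F$ under the same condition on $s$, should it be needed later.
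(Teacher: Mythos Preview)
Your proof is correct and follows exactly the same approach as the paper: combine the residual bound \eqref{eq:res-adi} with the Zolotarev estimate of Lemma~\ref{lem:zol}, then solve for the threshold on $s$. The paper states this in a single sentence, while you spell out the algebra, but the argument is identical.
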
 
	\begin{proof}
		Combining Lemma~\ref{lem:zol} and inequality \eqref{eq:res-adi} yields the claim. 
	\end{proof}
	\subsubsection{Rational Krylov (RK) method}
	Another approach for solving \eqref{eq:lowrank-sylv} is
	 to look for a solution in a well-chosen tensorization 
	 of low-dimensional subspaces. Common choices are rational Krylov
	  subspaces of the form 
	  $\mathcal K_{s,A_1}:=\Span\{U, (A_1-p_1I)^{-1}U, \dots, (A_1-p_sI)^{-1}U\}$, 
	  $\mathcal K_{s,A_2}:=\Span\{V, (A_2^T+q_1I)^{-1}V, \dots, (A_2^T+q_sI)^{-1}V\}$; 
	  more specifically, one consider an approximate solution 
	  $\delta X_s= Q_{s,A_1}\delta YQ_{s,A_2}^*$ where 
	  $Q_{s, A_1},Q_{s,A_2}$ are orthonormal bases of  
	  $\mathcal K_{s,A_1},\mathcal K_{s,A_2}$, 
	  respectively, and $\delta Y $ solves the projected equation
	\begin{equation}\label{eq:proj-sylv}
	(Q_{s,A_1}^*A_1 Q_{s,A_1}) \delta Y +\delta Y(Q_{s,A_2}^* A_2 Q_{s,A_2}) = Q_{s,A_1}^*UV^*Q_{s,A_2}. 
	\end{equation}
	Similarly to fADI, when the parameters $q_j,p_j$ are chosen as the zeros 
	and poles of the extremal rational function for 
	$Z_s([\alpha_1,\beta_1],[-\beta_2,-\alpha_2])$
	then the residual of the approximation can be related to
	\eqref{eq:zol} \cite[Theorem 2.1]{beckermann2011}:
	\begin{equation}\label{eq:residual-sylv}
	\norm{A_1 \delta X_s+\delta X_sA_2-UV^*}_F
	  \leq 2
	\left(1+\frac{\beta_1+\beta_2}{\alpha_1+\alpha_2}\right)
	Z_s([\alpha_1,\beta_1], [-\beta_2, -\alpha_2]) \norm{UV^*}_F.
	\end{equation}
	
	Based on \eqref{eq:residual-sylv} we can state 
	the analogue of Lemma~\ref{lem:adi-res} for 
	rational Krylov. 
	
	\begin{lemma} \label{lem:rk-res}
		Let $\epsilon> 0$, $\delta X$ be the solution of \eqref{eq:lowrank-sylv} 
		and $\delta X_s$ the solution returned by applying the 
		rational Krylov method to \eqref{eq:lowrank-sylv} with shifts 
		chosen as the zeros and poles of the extremal rational function 
		for $Z_s([\alpha_1,\beta_1],[-\beta_2 ,-\alpha_2])$. If 
		\begin{equation}\label{eq:rk-s}
		s\geq \frac{1}{\pi^2}\log\left(\frac{
			 8(\alpha_1+\alpha_2+\beta_1+\beta_2)
		 }{\epsilon(\alpha_1+\alpha_2)}\right)
		 \log\left(16\frac{(\alpha_1+\beta_2)(\alpha_2+\beta_1)}{(\alpha_1+\alpha_2)(\beta_1+\beta_2)}\right),
		\end{equation}
		then
		\begin{align*}
		\norm{A_1 \delta X_s+\delta X_s A_2-UV^*}_F&\leq \epsilon \norm{UV^*}_F.
		\end{align*}
	\end{lemma}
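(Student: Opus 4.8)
The plan is to mirror the proof of Lemma~\ref{lem:adi-res}, simply replacing the residual bound \eqref{eq:res-adi} for fADI with the corresponding bound \eqref{eq:residual-sylv} for rational Krylov, and then solving for the number of shifts $s$ that forces the right-hand side below $\epsilon \norm{UV^*}_F$.

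First I would recall that, by the same reasoning used for fADI, choosing the shifts $p_j,q_j$ in the rational Krylov method to be the zeros and poles of the extremal rational function for $Z_s([\alpha_1,\beta_1],[-\beta_2,-\alpha_2])$ makes the residual bound \eqref{eq:residual-sylv} read
\[
\norm{A_1 \delta X_s + \delta X_s A_2 - UV^*}_F
\leq 2\left(1 + \frac{\beta_1+\beta_2}{\alpha_1+\alpha_2}\right) Z_s([\alpha_1,\beta_1],[-\beta_2,-\alpha_2])\,\norm{UV^*}_F.
\]
Then I would invoke Lemma~\ref{lem:zol} to replace $Z_s$ by its explicit upper bound $4\exp\!\left(\tfrac{\pi^2}{2\log(16\gamma)}\right)^{-2s}$ with $\gamma = \tfrac{(\alpha_1+\beta_2)(\alpha_2+\beta_1)}{(\alpha_1+\alpha_2)(\beta_1+\beta_2)}$, yielding an overall prefactor of $8\bigl(1 + \tfrac{\beta_1+\beta_2}{\alpha_1+\alpha_2}\bigr) = \tfrac{8(\alpha_1+\alpha_2+\beta_1+\beta_2)}{\alpha_1+\alpha_2}$ times the exponential decay term.

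Next I would require this product to be at most $\epsilon$, i.e.\
\[
\frac{8(\alpha_1+\alpha_2+\beta_1+\beta_2)}{\alpha_1+\alpha_2}\exp\!\left(\frac{\pi^2}{2\log(16\gamma)}\right)^{-2s} \leq \epsilon,
\]
and take logarithms. Since $\exp\!\left(\tfrac{\pi^2}{2\log(16\gamma)}\right)^{-2s} = \exp\!\left(-\tfrac{\pi^2 s}{\log(16\gamma)}\right)$, the inequality rearranges to
\[
s \geq \frac{\log(16\gamma)}{\pi^2}\log\!\left(\frac{8(\alpha_1+\alpha_2+\beta_1+\beta_2)}{\epsilon(\alpha_1+\alpha_2)}\right),
\]
which is exactly condition \eqref{eq:rk-s}. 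Hence under \eqref{eq:rk-s} the residual satisfies the claimed bound.

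I do not expect any genuine obstacle here: the argument is a one-line substitution into the already-established residual estimate \eqref{eq:residual-sylv} combined with the Zolotarev decay rate of Lemma~\ref{lem:zol}, followed by elementary algebra to isolate $s$. The only point requiring a little care is bookkeeping the constant — making sure the factor $2(1 + \kappa_{12})$ from \eqref{eq:residual-sylv}, with $\kappa_{12} = (\beta_1+\beta_2)/(\alpha_1+\alpha_2)$, combines correctly with the factor $4$ from Lemma~\ref{lem:zol} to give the $8(\alpha_1+\alpha_2+\beta_1+\beta_2)/(\alpha_1+\alpha_2)$ appearing inside the logarithm in \eqref{eq:rk-s}; and noting that $16\gamma$ in Lemma~\ref{lem:zol} is precisely $16\frac{(\alpha_1+\beta_2)(\alpha_2+\beta_1)}{(\alpha_1+\alpha_2)(\beta_1+\beta_2)}$ as written in \eqref{eq:rk-s}.
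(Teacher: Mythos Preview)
Your proposal is correct and follows exactly the approach the paper intends: the paper does not spell out a proof for this lemma but prefaces it with ``Based on \eqref{eq:residual-sylv} we can state the analogous of Lemma~\ref{lem:adi-res} for rational Krylov,'' and your argument is precisely that analogue---combining \eqref{eq:residual-sylv} with Lemma~\ref{lem:zol} and solving for $s$. The bookkeeping of the constant $8(\alpha_1+\alpha_2+\beta_1+\beta_2)/(\alpha_1+\alpha_2)$ is handled correctly.
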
 
	
	\subsection{Error analysis for Algorithm~\ref{alg:dac2d}}
	\label{sec:error-analysis-2d}
	
	In Section~\ref{sec:low-rank} we have discussed two possible 
	implementations of \textsc{low\_rank\_sylv} in Algorithm~\ref{alg:dac2d}
	that return an approximate solution of the update equation, with the 
	residual norm controlled by the choice of the parameter $s$. 
	This allows us to use Lemma~\ref{lem:residual} to bound 
	the error in Algorithm~\ref{alg:dac2d}. 
	
	\begin{theorem}\label{thm:2d-accuracy}
		Let $A_1, A_2$ be symmetric positive definite HSS matrices
		with cluster trees of depth at most $\ell$, 
		and spectra 
		contained in $[\alpha_1, \beta_1]$ and $[\alpha_2, \beta_2]$, 
		respectively. 
		Let $\tilde X$ be the approximate solution 
		of \eqref{eq:sylv2d} returned by Algorithm~\ref{alg:dac2d}
		where \textsc{low\_rank\_sylv} is either fADI or RK 
		and uses the $s$ zeros and poles of the extremal rational function
		for $Z_s([\alpha_{1},\beta_{1}],[-\beta_{2}, -\alpha_{2}])$ as input
		parameters.
		If $\epsilon > 0$ and $s$ is
		such that $s \geq s_{\mathrm{ADI}}$ (when 
		\textsc{low\_rank\_sylv} is fADI) or 
		$s \geq s_{\mathrm{RK}}$ (when \textsc{low\_rank\_sylv} is RK)
		where \\ 
		\resizebox{.99\textwidth}{!}{
			\vbox{
		\begin{align*} 
		s_{\mathrm{ADI}} &= 
		\frac{1}{\pi^2}\log\left(4\frac{(\beta_{1}+\beta_{2})}{\epsilon(\alpha_{1}+\alpha_{2})}\right)\log\left(16\frac{(\alpha_{1}+\beta_{2})(\alpha_{2}+\beta_{1})}{(\alpha_{1}+\alpha_{2})(\beta_{1}+\beta_{2})}\right), \\ 
		s_{\mathrm{RK}} &= \frac{1}{\pi^2}\log\left(8\frac{
			(\beta_{1}+ \beta_{2})(\alpha_{1}+\alpha_{2}+\beta_{1}+\beta_{2})
		}{\epsilon(\alpha_{1}+\alpha_{2})^2}\right)\log\left(16\frac{(\alpha_{1}+\beta_{2})(\alpha_{{2}}+\beta_{1})}{(\alpha_{1}+\alpha_{2})(\beta_{1}+\beta_{2})}\right),
		\end{align*}}} \\ 
		and 
		Algorithm~\ref{alg:diag} solves the Sylvester equations 
		at  the base of the recursion with a residual norm bounded 
		by $\epsilon$ times the norm of the right-hand side, then 
		the computed solution satisfies:
		\begin{align}\label{eq:2d-res}
		\norm{A_1 \widetilde{X} + \widetilde{X} A_2- B}_F&\leq
			(\ell + 1)^2 \frac{\beta_1+\beta_2}{\alpha_1+\alpha_2} \epsilon\norm{B}_F. 
		\end{align}
	\end{theorem}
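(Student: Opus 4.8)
The plan is to reduce the statement to Lemma~\ref{lem:residual}, which already yields \eqref{eq:2d-res} as soon as every Sylvester equation solved inside Algorithm~\ref{alg:dac2d} meets the relative-residual requirement \eqref{eq:sylvester-relative-accuracy}, i.e.\ has residual Frobenius norm at most $\epsilon$ times that of its right-hand side (we also need the mild technical constraint $\kappa\epsilon<2/\ell$ of Lemma~\ref{lem:residual}, harmless since $\epsilon$ is a small target tolerance). For the equations at the bottom of the recursion, solved by Algorithm~\ref{alg:diag}, this is precisely the hypothesis we have assumed. Hence the whole argument reduces to controlling the residual of the update equations handled by \textsc{low\_rank\_sylv}.

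First I would observe that the coefficient matrices of \emph{every} update equation produced by Algorithm~\ref{alg:dac2d} and Algorithm~\ref{alg:dac2d-balanced} are principal submatrices of $A_1$ and $A_2$ — the diagonal blocks $A_{1,ii}^{(h)}$, $A_{2,jj}^{(h)}$ created by the recursive splitting, or $A_1$, $A_2$ themselves during the unbalanced phase. Since a principal submatrix $A_{ii}$ of an SPD matrix $A$ obeys $\lambda_{\min}(A)\le\lambda_{\min}(A_{ii})\le\lambda_{\max}(A_{ii})\le\lambda_{\max}(A)$ (immediate from the Rayleigh-quotient characterization after padding with zeros), each update equation is a positive definite Sylvester equation whose coefficient spectra lie in subintervals $[\alpha_1',\beta_1']\subseteq[\alpha_1,\beta_1]$ and $[\alpha_2',\beta_2']\subseteq[\alpha_2,\beta_2]$; in particular the shifted solves inside \textsc{low\_rank\_sylv} stay well posed at every level.

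I would then argue that running \textsc{low\_rank\_sylv} with the zeros and poles of the extremal rational function $r$ for the \emph{global} problem $Z_s([\alpha_1,\beta_1],[-\beta_2,-\alpha_2])$ is still effective on these smaller equations. The residual bounds \eqref{eq:res-adi} (fADI) and \eqref{eq:residual-sylv} (RK) depend on the spectra only through $\max_{[\alpha_1',\beta_1']}|r|\big/\min_{[-\beta_2',-\alpha_2']}|r|$ and, for RK, through the prefactor $2\bigl(1+(\beta_1'+\beta_2')/(\alpha_1'+\alpha_2')\bigr)$. Both quantities are monotone under shrinking of the intervals: restricting the domain can only decrease a maximum of $|r|$ and increase a minimum of $|r|$, while $(\beta_1'+\beta_2')/(\alpha_1'+\alpha_2')\le\kappa$. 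Therefore the residual of each update equation is bounded by $Z_s([\alpha_1,\beta_1],[-\beta_2,-\alpha_2])$ (times $2(1+\kappa)$ in the RK case) times the norm of its right-hand side, exactly as for an equation posed on the full intervals. Feeding in the threshold $s\ge s_{\mathrm{ADI}}$ (resp.\ $s\ge s_{\mathrm{RK}}$) and invoking Lemma~\ref{lem:zol} as in the proof of Lemma~\ref{lem:adi-res} (resp.\ Lemma~\ref{lem:rk-res}), but with target accuracy $\epsilon(\alpha_1+\alpha_2)/(\beta_1+\beta_2)=\epsilon/\kappa$ in place of $\epsilon$ — which is exactly what turns the thresholds of those lemmas into $s_{\mathrm{ADI}}$ and $s_{\mathrm{RK}}$ — shows that each update equation is solved with relative residual at most $\epsilon/\kappa\le\epsilon$. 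Consequently all Sylvester equations generated by Algorithm~\ref{alg:dac2d} satisfy \eqref{eq:sylvester-relative-accuracy} with parameter $\epsilon$, and Lemma~\ref{lem:residual} delivers \eqref{eq:2d-res}. The unbalanced case is handled as in Section~\ref{sec:unbalanced-2d}: the auxiliary levels added on top of the shallower cluster tree split a single mode and generate two subproblems instead of four, but otherwise fit the same framework, so the bound holds with $\ell=\max\{\ell_1,\ell_2\}$.

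The step requiring the most care is this last passage — ensuring that the \emph{a priori} residual guarantees for fADI and RK, and the fact that a single shift set and a single value of $s$ work everywhere, survive the descent to the (possibly tiny) update equations deep in the recursion. This hinges on the principal-submatrix structure of their coefficients, so that all subproblem condition numbers are dominated by $\kappa$ and all subproblem spectra by $[\alpha_t,\beta_t]$, and on the monotonicity of $\max_E|r|/\min_F|r|$ when $E$ and $F$ shrink; everything else is routine bookkeeping that funnels into Lemma~\ref{lem:residual}.
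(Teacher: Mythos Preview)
Your proposal is correct and follows the same route as the paper: verify that every Sylvester equation generated in the recursion satisfies the relative-residual condition \eqref{eq:sylvester-relative-accuracy}, then invoke Lemma~\ref{lem:residual}. You actually supply more detail than the paper's own proof, which simply cites Lemmas~\ref{lem:adi-res} and~\ref{lem:rk-res} without spelling out the principal-submatrix and monotonicity arguments that justify using the \emph{global} Zolotarev shifts on the smaller update equations at every level.
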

	\begin{proof}
	In view of Lemma~\ref{lem:adi-res} (for fADI)
	and Lemma~\ref{lem:rk-res} (for RK), the assumption 
	on $s$ guarantees that
	the norm of the residual of each update 
	equation is bounded by $\epsilon$ times the norm of its right-hand side. 
	In addition, the equations at the base of the recursion are assumed to be solved 
	at least as accurately as the update equations, and the claim 
	follows by applying Lemma~\ref{lem:residual}. 
	\end{proof}
	
	We remark that, usually, the cluster trees for $A_1$ and $A_2$ 
	are chosen by splitting the indices in a balanced way; this implies 
	that their depths verify $\ell_i \sim \mathcal \mathcal O(\log(n_i / n_{\min}))$. 
	
	\begin{remark}
		We note that $s_{\mathrm{RK}}$ in Theorem~\ref{thm:2d-accuracy}
		is larger than $s_{\mathrm{ADI}}$; under the assumption 
		$\alpha_{1}+\alpha_{2}+\beta_{1}+\beta_{2}\approx \beta_{1}+\beta_{2}$, 
		we have that 
		\[ 
		s_{\mathrm{RK}} - s_{\mathrm{ADI}} \approx \frac{2}{\pi^2} 
		\log \left(2 \frac{\beta_{1} + \beta_{2}}{\alpha_{1} + \alpha_{2}}\right) 
		\log\left(16\frac{(\alpha_{1}+\beta_{2})(\alpha_{2}+\beta_{1})}{(\alpha_{1}+\alpha_{2})(\beta_{1}+\beta_{2})}\right). 
		\]
		In practice, it is often observed that the rational Krylov method requires fewer shifts than fADI 
		to attain a certain accuracy, because its convergence is linked 
		to a rational function which is automatically optimized over a larger 
		space \cite{beckermann2011}. 
	\end{remark}
	
	\subsubsection{The case of M-matrices}
	The term $\kappa$ in the bound of Theorem~\ref{thm:2d-accuracy} 
	arises from controlling the norms of $\Xi^{(h)}$
	and $\tilde \Xi^{(h)} - \Xi^{(h)}$. In the special
	case where $A_j$ are positive definite M-matrices, i.e. $A_j = c_j I-N_j$ with $N_j\geq 0$ and $c_j$ at least as large as the spectral radius of $N_j$, this can be reduced 
	to $\sqrt{\kappa}$ as shown in the following result.
	
	\begin{lemma} \label{lem:m-matrices}
		Let $A_1, A_2$ be symmetric positive definite $M$-matrices. Then, 
		the right-hand sides $\Xi^{(h)}$ of the intermediate Sylvester 
		equations solved in exact arithmetic in Algorithm~\ref{alg:dac2d} 
		satisfy 
		\[
			\norm{\Xi^{(h)}}_F \leq \sqrt{\kappa} \norm{B}_F,  
			\qquad 
			\norm{\tilde \Xi^{(h)} - \Xi^{(h)}}_F \leq \sqrt{\kappa} 
			\norm{R^{(\ell)} + \ldots + R^{(h+1)}}_F,  
		\]
		where $\kappa = (\beta_1 + \beta_2) / (\alpha_1 + \alpha_2)$. 
	\end{lemma}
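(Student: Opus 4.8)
The plan is to express the right-hand side $\Xi^{(h)}$ as the image of $B$ under an explicit linear operator, and then to bound the norm of that operator by $\sqrt{\kappa}$ exploiting the M-matrix structure. Collecting the blocks in \eqref{eq:correction2levels}, the exact right-hand side at level $h$ is $\Xi^{(h)} = C_1 X^{(h+1)} + X^{(h+1)} C_2$, where $C_t := A_t^{(h+1)} - A_t^{(h)}$ gathers the (negated) off-diagonal blocks uncovered between levels $h$ and $h+1$, and $X^{(h+1)}$ solves $A_1^{(h+1)} X^{(h+1)} + X^{(h+1)} A_2^{(h+1)} = B$. Vectorizing, and setting $\mathbf C := I \otimes C_1 + C_2 \otimes I$ and $\mathbf D := I \otimes A_1^{(h+1)} + A_2^{(h+1)} \otimes I$, this reads $\vect(\Xi^{(h)}) = \mathbf C \mathbf D^{-1} \vect(B)$, so it suffices to show $\norm{\mathbf C \mathbf D^{-1}}_2 \leq \sqrt{\kappa}$. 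Since $A_t^{(h+1)}$ is a direct sum of principal submatrices of $A_t$, Cauchy interlacing and the Kronecker-sum eigenvalue formula show that $\mathbf D$ is symmetric positive definite with spectrum in $[\alpha_1 + \alpha_2, \beta_1 + \beta_2]$.

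The core of the argument, and its main obstacle, is an observation on M-matrices. The matrix $A_t^{(h)}$, being a direct sum of principal submatrices of the symmetric M-matrix $A_t$, is itself a symmetric M-matrix, and $2 A_t^{(h+1)} - A_t^{(h)}$ is obtained from $A_t^{(h)}$ by flipping the sign of exactly the off-diagonal entries contained in the blocks $A_t^{(h)} - A_t^{(h+1)}$. Writing $A_t^{(h)} = sI - P$ with $P \geq 0$ entrywise, such a sign flip replaces $P$ by a symmetric matrix $\widehat P$ with $|\widehat P| = P$, so Perron--Frobenius gives $\rho(\widehat P) \leq \rho(|\widehat P|) = \rho(P) = \lambda_{\max}(P)$, whence $\lambda_{\min}(2 A_t^{(h+1)} - A_t^{(h)}) = s - \lambda_{\max}(\widehat P) \geq s - \lambda_{\max}(P) = \lambda_{\min}(A_t^{(h)}) > 0$. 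Therefore $2 A_t^{(h+1)} - A_t^{(h)} \succ 0$, i.e. $-A_t^{(h+1)} \prec C_t$; combining this with $C_t \prec A_t^{(h+1)}$ (which is just $A_t^{(h)} = A_t^{(h+1)} - C_t \succ 0$) yields $-A_t^{(h+1)} \preceq C_t \preceq A_t^{(h+1)}$ for $t = 1, 2$.

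Since the Loewner order is preserved when forming Kronecker sums, these bounds imply $-\mathbf D \preceq \mathbf C \preceq \mathbf D$, that is $\norm{\mathbf D^{-1/2} \mathbf C \mathbf D^{-1/2}}_2 \leq 1$. Hence $\norm{\mathbf C \mathbf D^{-1}}_2 = \norm{\mathbf D^{1/2} (\mathbf D^{-1/2} \mathbf C \mathbf D^{-1/2}) \mathbf D^{-1/2}}_2 \leq \norm{\mathbf D^{1/2}}_2 \norm{\mathbf D^{-1/2}}_2 \leq \sqrt{(\beta_1 + \beta_2)/(\alpha_1 + \alpha_2)} = \sqrt{\kappa}$, which gives the first inequality once one notes $\norm{\vect(B)}_2 = \norm{B}_F$. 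For the second inequality I would use that $\tilde{\Xi}^{(h)} - \Xi^{(h)} = C_1 (\tilde X^{(h+1)} - X^{(h+1)}) + (\tilde X^{(h+1)} - X^{(h+1)}) C_2$, while the proof of Lemma~\ref{lem:residuals-rh} shows that $\tilde X^{(h+1)} - X^{(h+1)}$ solves the Sylvester equation with coefficients $A_1^{(h+1)}, A_2^{(h+1)}$ and right-hand side $R^{(\ell)} + \dots + R^{(h+1)}$; thus $\vect(\tilde{\Xi}^{(h)} - \Xi^{(h)}) = \mathbf C \mathbf D^{-1} \vect(R^{(\ell)} + \dots + R^{(h+1)})$ and the same operator-norm estimate applies. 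Finally, in the unbalanced recursion steps of Algorithm~\ref{alg:dac2d} where only one mode is split, the corresponding $C_t$ vanishes and the whole argument goes through verbatim.
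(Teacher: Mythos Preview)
Your proof is correct and follows the same overall architecture as the paper: write $\Xi^{(h)}$ as $\mathbf C\mathbf D^{-1}$ applied to $B$ (the paper's $\mathcal N\mathcal M^{-1}$), establish $\norm{\mathbf D^{-1/2}\mathbf C\mathbf D^{-1/2}}_2\le 1$, and then sandwich with $\mathbf D^{\pm 1/2}$ to extract the factor $\sqrt{\kappa}$. The second inequality is handled identically in both.

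The only genuine difference is in how the bound $\norm{\mathbf D^{-1/2}\mathbf C\mathbf D^{-1/2}}_2\le 1$ is obtained. The paper observes that $\mathbf D-\mathbf C$ is the Kronecker-sum Sylvester operator at level $h$, hence a symmetric positive definite $M$-matrix, so $\mathbf D,\mathbf C$ form a regular splitting and $\rho(\mathbf D^{-1}\mathbf C)<1$; symmetry then turns this spectral-radius bound into a spectral-norm bound. You instead work mode by mode: from the $M$-matrix structure of $A_t^{(h)}$ and a Perron--Frobenius comparison you derive the Loewner sandwich $-A_t^{(h+1)}\preceq C_t\preceq A_t^{(h+1)}$, and then pass to the Kronecker sum. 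Both arguments rest on the same Perron--Frobenius principle; the paper's is a one-line appeal to regular-splitting theory, while yours is more self-contained and has the mild advantage that the mode-wise Loewner bounds $-A_t^{(h+1)}\preceq C_t\preceq A_t^{(h+1)}$ transfer directly to the $d$-dimensional setting without re-proving anything about Kronecker sums of $M$-matrices.
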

	
	\begin{proof}
		We note that $\Xi_{ij}^{(h)}$ can be written 
		as $\Xi_{ij}^{(h)} = \mathcal N \mathcal M^{-1} \T \T B_{ij}^{(h)}$, 
		where $\mathcal N$ and $\mathcal M$ are the operators
		\begin{align*}
			\mathcal M Y &:= \begin{bmatrix}
				A_{1, 2i-1, 2i-1}^{(h + 1)} \\ 
				& A_{1, 2i, 2i}^{(h + 1)} \\ 
			\end{bmatrix} Y + Y \begin{bmatrix}
				A_{2, 2i-1, 2i-1}^{(h + 1)} \\ 
				& A_{2, 2i, 2i}^{(h + 1)} \\ 
			\end{bmatrix},  \\ 
		\mathcal N Y &:= -\begin{bmatrix}
			& A_{1, 2i-1, 2i}^{(h + 1)} \\ 
			A_{1, 2i, 2i-1}^{(h + 1)} \\ 
		\end{bmatrix} Y - Y \begin{bmatrix}
			& A_{2, 2i-1, 2i}^{(h + 1)} \\ 
			A_{2, 2i, 2i-1}^{(h + 1)} \\ 
		\end{bmatrix}.
		\end{align*}
		In addition,  $\mathcal M - \mathcal N = 
		I \otimes A^{(h)}_{1,ii} + A^{(h)}_{2,ii} \otimes I$ is a
		positive definite M-matrix, and so is $\mathcal M$. In particular, 
		$\mathcal M - \mathcal N$ is a regular splitting, and therefore 
		$\rho(\mathcal M^{-1} \mathcal N) < 1$. Hence, 
		\begin{align*}
			\norm{\Xi_{ij}^{(h)}}_F &\leq 
			\norm{\mathcal N \mathcal M^{-1}}_2 \norm{\T B_{ij}}_F \\ 
			&\leq 
			\norm{\mathcal M^{\frac 12}}_2
			\norm{\mathcal M^{-\frac 12} \mathcal N \mathcal M^{-\frac 12}}_2
			\norm{\mathcal M^{-\frac 12}}_2
			\norm{\T B_{ij}}_F 
			\leq \sqrt{\kappa} \norm{\T B_{ij}}_F, 
		\end{align*}
		where we have used that the matrix $\mathcal M^{-\frac 12} \mathcal N \mathcal M^{-\frac 12}$ 
		is symmetric and similar to $\mathcal N M^{-1}$,  therefore has both 
		spectral radius and spectral norm bounded by $1$, and that the condition number of $\mathcal M$ is bounded by the one of $I\otimes A_1+A_2\otimes I$. 
	
		For the second relation we have 
		\[
			\tilde \Xi^{(h)} - \Xi^{(h)} = 
			  -(A_1^{(h)} - A_1^{(h+1)}) (\tilde X^{(h+1)} - X^{(h+1)}) - 
			  (\tilde X^{(h+1)} - X^{(h+1)}) (A_2^{(h)} - A_2^{(h+1)}),
		\]
		and $\tilde X^{(h+1)} - X^{(h+1)}$ solves the Sylvester equation 
		(see Lemma~\ref{lem:residuals-rh})
		\[
			A_1^{(h+1)} (\tilde X^{(h+1)} - X^{(h+1)})
			+ (\tilde X^{(h+1)} - X^{(h+1)}) A_2^{(h+1)} = 
			R^{(\ell)} + \ldots + R^{(h+1)}.
		\]
		Following the same argument used for the first point we obtain 
		the claim. 
	\end{proof}
	
	\begin{corollary}\label{cor:accuracy-mmatrices}
		Under the same hypotheses and settings of Theorem~\ref{thm:2d-accuracy}, 
		with the additional assumption of $A_1, A_2$ being M-matrices, 
		Algorithm~\ref{alg:diag} computes an approximate solution 
		$\tilde X$ that satisfies
		\begin{align}\label{eq:2d-res-m}
		\norm{A_1 \widetilde{X} + \widetilde{X} A_2- B}_F&\leq
			(\ell + 1)^2 \sqrt{ \frac{\beta_1+\beta_2}{\alpha_1+\alpha_2} } \epsilon\norm{B}_F. 
		\end{align}
	\end{corollary}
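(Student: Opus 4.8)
The proof will reuse the two-step mechanism behind Theorem~\ref{thm:2d-accuracy} — first Lemma~\ref{lem:residuals-rh}, then Lemma~\ref{lem:residual} — and simply feed it the sharper estimates provided by Lemma~\ref{lem:m-matrices}. The starting observation is that the factor $\kappa=(\beta_1+\beta_2)/(\alpha_1+\alpha_2)$ enters the analysis of Lemma~\ref{lem:residuals-rh} in exactly two spots: when the right-hand side $\Xi^{(h)}$ of an exact-arithmetic update equation is bounded as $\norm{\Xi^{(h)}}_F\le\kappa\norm{B}_F$, and when the perturbation $\tilde\Xi^{(h)}-\Xi^{(h)}$ is bounded as $\norm{\tilde\Xi^{(h)}-\Xi^{(h)}}_F\le\kappa\,\norm{R^{(\ell)}+\dots+R^{(h+1)}}_F$; in the general case both come from pairing $\norm{\tilde\Xi^{(h)}}_F\le(\beta_1+\beta_2)\norm{\tilde X^{(h+1)}}_F$ with the $1/(\alpha_1+\alpha_2)$ bound on the Sylvester solution operator. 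Under the M-matrix hypothesis, Lemma~\ref{lem:m-matrices} gives precisely these two inequalities with $\kappa$ replaced by $\sqrt{\kappa}$.

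First I would re-run the induction of Lemma~\ref{lem:residuals-rh}. Writing $\tilde\Xi^{(h)}=\Xi^{(h)}+(\tilde\Xi^{(h)}-\Xi^{(h)})$, the triangle inequality together with Lemma~\ref{lem:m-matrices} turns the recursion into
\[
\norm{R^{(h)}}_F\le\epsilon\,\norm{\tilde\Xi^{(h)}}_F\le\sqrt{\kappa}\,\epsilon\left(\norm{B}_F+\sum_{j=h+1}^{\ell}\norm{R^{(j)}}_F\right),
\]
which coincides verbatim with the recursion in the proof of Lemma~\ref{lem:residuals-rh} after the substitution $\kappa\mapsto\sqrt{\kappa}$. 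The base case $h=\ell$ is unaffected, since it only uses $\norm{R^{(\ell)}}_F\le\epsilon\norm{B}_F$ and the inequality $\epsilon\le\sqrt{\kappa}\,\epsilon(1+\epsilon)(1+\sqrt{\kappa}\,\epsilon)^{-1}$, which holds because $\sqrt{\kappa}\ge1$. Solving the recursion by the same telescoping manipulation yields $\norm{R^{(h)}}_F\le\sqrt{\kappa}\,\epsilon(1+\epsilon)(1+\sqrt{\kappa}\,\epsilon)^{\ell-h-1}\norm{B}_F$. Then I would repeat the proof of Lemma~\ref{lem:residual} unchanged with $\kappa\mapsto\sqrt{\kappa}$: summing the residual bounds over $h=0,\dots,\ell$ gives $\norm{A_1\tilde X+\tilde X A_2-B}_F\le\bigl[(1+\sqrt{\kappa}\,\epsilon)^{\ell+1}-1\bigr]\norm{B}_F$, and since the settings of Theorem~\ref{thm:2d-accuracy} carry the constraint $\kappa\epsilon<2/\ell$ — hence, a fortiori, $\sqrt{\kappa}\,\epsilon<2/\ell$ because $\sqrt{\kappa}\le\kappa$ — the $h=1$ term of the binomial expansion dominates, producing the stated bound $(\ell+1)^2\sqrt{\kappa}\,\epsilon\,\norm{B}_F$. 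Nothing else changes: the choice $s\ge s_{\mathrm{ADI}}$ (resp. $s\ge s_{\mathrm{RK}}$) still guarantees, via Lemma~\ref{lem:adi-res} (resp. Lemma~\ref{lem:rk-res}), that every update equation is solved with a residual at most $\epsilon$ times the norm of its right-hand side, and the diagonalization at the base of the recursion is at least this accurate by assumption, exactly as in Theorem~\ref{thm:2d-accuracy}.

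The only real point requiring care is the bookkeeping claim that $\kappa$ does not creep into the argument anywhere beyond the two estimates above — in particular, that the $1/(\alpha_1+\alpha_2)$ bound used to control $\tilde X^{(h+1)}-X^{(h+1)}$ in the proof of Lemma~\ref{lem:residuals-rh} is already built into the second inequality of Lemma~\ref{lem:m-matrices} and must not be applied a second time. Once that is verified, the corollary follows from the proofs of Lemma~\ref{lem:residuals-rh}, Lemma~\ref{lem:residual} and Theorem~\ref{thm:2d-accuracy} by the single substitution $\kappa\mapsto\sqrt{\kappa}$.
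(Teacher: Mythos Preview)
Your proposal is correct and follows essentially the same approach as the paper: the corollary is stated without proof precisely because it is obtained by plugging the two $\sqrt{\kappa}$ estimates of Lemma~\ref{lem:m-matrices} into the chain Lemma~\ref{lem:residuals-rh} $\to$ Lemma~\ref{lem:residual} $\to$ Theorem~\ref{thm:2d-accuracy} and re-running the induction with $\kappa\mapsto\sqrt{\kappa}$, exactly as you describe (and as the paper does explicitly in the tensor analogue, Corollary~4.5). Your bookkeeping remark that the $1/(\alpha_1+\alpha_2)$ bound is already absorbed into the second inequality of Lemma~\ref{lem:m-matrices}, and the observation that $\kappa\epsilon<2/\ell$ implies $\sqrt{\kappa}\,\epsilon<2/\ell$, are the only points needing attention, and you handle both correctly.
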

	\subsubsection{Validation of the bounds}
	We now verify the dependency of the final residual norm on the condition number of the problem to inspect the tightness of the behavior predicted by Theorem~\ref{thm:2d-accuracy} and Corollary~\ref{cor:accuracy-mmatrices}. We consider two numerical tests concerning the solution of Lyapunov equations of the form $A_{(i)}X+XA_{(i)}=C$, where the $n\times n$ matrices $A_{(i)}$ have size $n=256$ and increasing condition numbers with respect to $i$; the minimal block size is set to $n_{\min}=32$.
	
	In the first example we generate $A_{(i)}= QD_{(i)}Q^*$ where 
	\begin{itemize}
		\item $D_{(i)}= D^{p_i}$ is the $p_i$th power of the diagonal matrix 
		  $D$ containing the eigenvalues of the 1D discrete Laplacian, i.e. 
		  $2+2\cos(\pi j /(n+1))$, $j=1,\dots, n$. The $p_i$ are chosen with 
		  a uniform sampling of $[1, 2.15]$, so that the condition numbers range 
		  approximately between  $10^4$ and $10^9$.
		\item $Q$ is the Q factor of the QR factorization of a randomly generated matrix with lower bandwidth equal 
		  to $8$, obtained with the MATLAB command \texttt{Q = orth(triu(randn(n), -8), 0)}. This choice 
		  ensures that $A_{(i)}$ is SPD and HSS with HSS rank bounded by $8$~\cite{vandebril2007matrix}.
		\item $C=QSQ^*$, where $Q$ is the matrix defined in the previous point, and $S$ is diagonal with entries 
		  $S_{ii}= \left(\frac{i-1}{n-1}\right)^{10}$. The latter choice aims at giving more importance to the component 
		  of the right-hand side aligned with the smallest eigenvectors of the Sylvester operator. We note that this 
		  is helpful to trigger the worst case behavior of the residual norm.
	\end{itemize} 
	For each value of $i$ we have performed $100$ runs of Algorithm~\ref{alg:dac2d}, using fADI as
	\textsc{low\_rank\_sylv} with threshold $\epsilon=10^{-6}$. The measured residual norms 
	$\norm{A_{(i)}\widetilde X+\widetilde XA_{(i)}-C}_F/\norm{C}_F$ are reported in the left part of 
	Figure~\ref{fig:accuracy}. We remark that the growth of the residual norm appears to be proportional 
	to $\sqrt{\kappa}$ suggesting that the bound from Theorem~\ref{thm:2d-accuracy} is pessimistic.
	
	\begin{figure}
		\centering
		\begin{tikzpicture}
			\begin{loglogaxis}[legend pos = north west, xlabel = $\kappa$, width=.5\linewidth, ymax=1e-1, ymin=2e-8, title={General SPD}]
				\addplot[only marks, mark=x, red] table {test_accuracy.dat};
				\addplot[domain = 1e4:4e9, dashed, blue]{1e-7 * sqrt(x)};
				\legend{Res.\ norm, $\mathcal O(\sqrt{\kappa})$}
			\end{loglogaxis}
		\end{tikzpicture}~\begin{tikzpicture}
		\begin{loglogaxis}[legend pos = north west, xlabel = $\kappa$, width=.5\linewidth, ymax=1e-1, ymin=2e-8, title={SPD M-matrix}]
			\addplot[only marks, mark=x, red] table {test_accuracy_mmatrix.dat};			
			\legend{Res.\ norm}
		\end{loglogaxis}
	\end{tikzpicture}
	\caption{Residual norm behavior for Algorithm~\ref{alg:dac2d}, with respect to the conditioning of problem. On the left, the coefficients of the matrix equation are generic HSS SPD matrices; On the right, the coefficients have the additional property of being M-matrices.}\label{fig:accuracy}
	\end{figure}
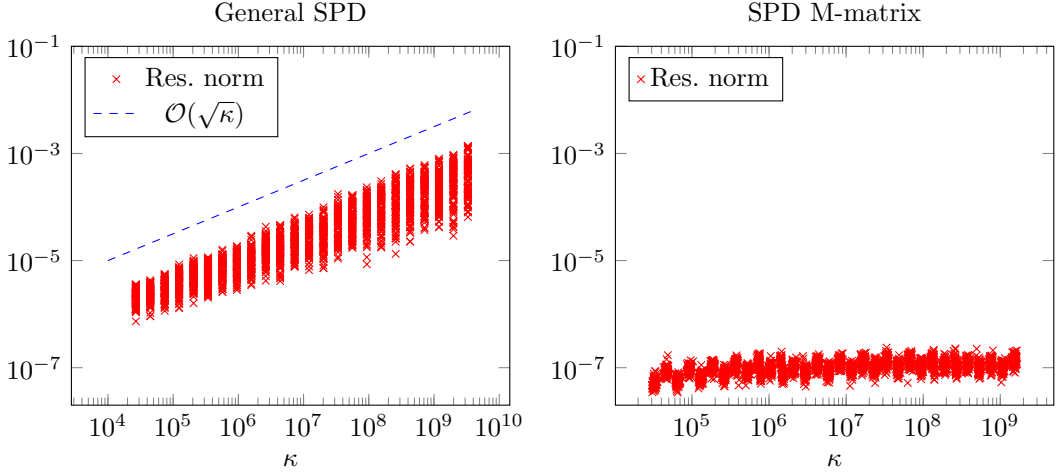
	
	In the second test, the matrices $A_{(i)}$ are chosen as shifted 1D discrete Laplacian, where the shift is selected to match the same range of condition number of the first experiment; note that, the matrices $A_{(i)}$ are SPD M-matrices, with HSS rank 1 (they are tridiagonal). The right-hand side $C$ is chosen as before, by replacing $Q$ with the eigenvector matrix of the 1D discrete Laplacian. Corollary~\ref{cor:accuracy-mmatrices} would predict an $\mathcal O(\sqrt{\kappa})$ growth for the residual norms; however, as shown in the right part of Figure~\ref{fig:accuracy}, the residual norms seems not influenced by the condition number of the problem.
	\begin{remark}
	The examples reported in this section have been chosen to display the worst case behavior of the residual norms; in the typical case, for instance by choosing \texttt{C = randn(n)}, the influence of the condition number on the residual norms is hardly visible also in the case of non M-matrix coefficients.	
	\end{remark}
	\subsection{Complexity of Algorithm~\ref{alg:dac2d}}
	
	 In order to simplify the complexity analysis of Algorithm~\ref{alg:dac2d}
	 we assume that $n_1=2^{\ell_1} n_{\min}$ 
	 and $n_2=2^{\ell_2} n_{\min}$ and 
	 that $A_1,A_2$ are HSS matrices of HSS rank $k$, with a partitioning 
	 of depth $\ell_1, \ell_2$ obtained by halving the dimension at every level. 
	
	We start by considering the simpler case $n := n_1 = n_2$ and therefore 
		$\ell = \ell_1 = \ell_2$. 
	   We remark that given $s\in\mathbb N$, executing $s$ steps 
	   of fADI or RK for solving \eqref{eq:reshape-update}
	   with size $n \times n$, requires
	   \begin{align}\label{eq:sylv-cost}
	   \mathcal C_{ADI}(n, s, k) &= \mathcal O(k^2sn),&
	   \mathcal C_{RK}(n, s, k) &=\mathcal O(k^2s^2n),
	   \end{align}
	   flops, respectively. Indeed, each iteration of fADI involves the solution 
	   of two block linear systems with $2k$ columns each and an HSS matrix 
	   coefficient; this is performed by computing two ULV factorizations
	   ($\mathcal O(k^2n)$, see \cite{xia2010fast}) and solving linear 
	   systems with triangular and unitary HSS matrices ($\mathcal O(kn)$ 
	   for each of the $2k$ columns). The cost analysis of rational Krylov 
	   is analogous, but it is dominated by reorthogonalizing 
	   the basis at each iteration; this requires $\mathcal O(k^2jn)$ flops at 
	   iteration $j$, for $j=1,\dots,s$.  
	   
	   Combining these ingredients yields the following.
	   \begin{theorem}\label{thm:2d-complexity}
	   Let $A_1,A_2\in\mathbb R^{n\times n}$, $n=2^\ell n_{\min}$, and assume
	   that $A_1,A_2$ are HSS matrices of HSS rank $k$, with a partitioning
	   of depth $\ell$ obtained by halving the dimension at every level. 
	   Then, Algorithm~\ref{alg:dac2d-balanced} requires:
	   \begin{itemize}
		   \item[$(i)$] $\mathcal O((k\log(n)+n_{\min}+ sk^2)n^2)$ flops 
			if \textsc{low\_rank\_sylv} implements $s$ steps of the fADI method,
		   \item[$(ii)$] $\mathcal O((k\log(n)+n_{\min}+ s^2k^2)n^2)$ flops if \textsc{low\_rank\_sylv} implements $s$ steps of the rational Krylov method.
	   \end{itemize}
	   \end{theorem}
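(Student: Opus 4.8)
The plan is to set up the recursion tree of Algorithm~\ref{alg:dac2d-balanced} and sum the work over all nodes, organizing the count level by level. At each of the $\ell$ levels of recursion, the tree has $4^h$ nodes at depth $h$ (since we split both modes, generating $4$ subproblems), and the subproblem at depth $h$ has coefficients of size roughly $(n/2^h)\times(n/2^h)$. The total work at a node splits into three pieces: (a) forming the right-hand sides $B_{ij}^{(1)}$ and the low-rank factors $U,V$ of the update right-hand side in line~\ref{step:fact}; (b) running $s$ steps of \textsc{low\_rank\_sylv} on the update equation; and (c) the cost at the leaves, where \textsc{lyapnd\_diag} is called on $n_{\min}\times n_{\min}$ blocks. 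First I would handle (c): there are $4^\ell \sim (n/n_{\min})^2$ leaves, each costing $\mathcal O(n_{\min}^3)$ by dense diagonalization, for a total of $\mathcal O(n^2 n_{\min})$ flops, which accounts for the $n_{\min} n^2$ term.

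Next I would bound (b). At depth $h$, a node runs $s$ steps of fADI (resp.\ RK) on an equation of size $n_h := n/2^h$ with HSS rank $k$ (the HSS rank of the submatrices $A_{t,ii}^{(h)}$ is still bounded by $k$, and the off-diagonal rank that determines the rank of $UV^*$ is also $\mathcal O(k)$). By \eqref{eq:sylv-cost} this costs $\mathcal C_{ADI}(n_h,s,k)=\mathcal O(k^2 s\, n_h)$ (resp.\ $\mathcal O(k^2 s^2 n_h)$). Summing over the $4^h$ nodes at depth $h$ gives $\mathcal O(4^h k^2 s\, n/2^h)=\mathcal O(2^h k^2 s\, n)$ per level; summing the geometric series over $h=0,\dots,\ell$ is dominated by $h=\ell$, yielding $\mathcal O(2^\ell k^2 s\, n)=\mathcal O(k^2 s\, n^2)$ for fADI, and $\mathcal O(k^2 s^2 n^2)$ for RK. This produces the $sk^2 n^2$ (resp.\ $s^2 k^2 n^2$) terms.

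Finally I would handle (a), the overhead not counted above: partitioning $B$, the HSS arithmetic needed to build the low-rank factors of $A_1^{\mathrm{off}}X^{(1)}+X^{(1)}A_2^{\mathrm{off}}$ via the explicit formulas of Remark~\ref{rem:rhs}, and the QR--SVD recompression. Forming these factors requires HSS matrix–vector products with $\mathcal O(k)$ columns, costing $\mathcal O(k^2 n_h)$ at a node of size $n_h$, and the recompression of an $\mathcal O(k)$-rank factor costs $\mathcal O(k^2 n_h)$ as well; the dense full matrix $X^{(1)}$ of size $n_h\times n_h$ must also be assembled and stored, which already costs $\mathcal O(n_h^2)$ per node and $\mathcal O(4^h n_h^2)=\mathcal O(n^2)$ per level, hence $\mathcal O(\ell n^2)=\mathcal O(n^2\log n)$ overall. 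More precisely, the dominant logarithmic overhead comes from the operations on the dense iterates propagated through the $\ell$ levels; tracking these carefully gives the $k\log(n)\,n^2$ contribution (the $\log n$ being $\ell\sim\log(n/n_{\min})$). Adding the three contributions $(a)+(b)+(c)$ gives $\mathcal O((k\log n + n_{\min} + sk^2)n^2)$ for fADI and the analogue with $s^2k^2$ for RK.

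The main obstacle is the bookkeeping in step (a): one must verify that the ranks of all the low-rank right-hand sides stay $\mathcal O(k)$ down the recursion (so that the fADI/RK cost at every node genuinely uses rank $k$, not a growing rank), and that the dense blocks $X^{(1)}$ that need to be formed and recompressed contribute only a $\log n$ factor rather than a larger power. Both follow from the HSS structure — nestedness of the off-diagonal row/column spaces keeps the off-diagonal ranks bounded by $k$ at every level — but spelling this out is the delicate part; the geometric-series summations in (b) and (c) are routine.
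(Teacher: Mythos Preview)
Your approach is essentially the paper's: a level-by-level summation over the recursion tree, splitting the work into the leaf cost, the \textsc{low\_rank\_sylv} calls, and the overhead of forming the low-rank right-hand sides. Parts (b) and (c) are handled correctly and match the paper exactly.

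The treatment of part (a), however, is muddled. You write that forming the factors ``requires HSS matrix--vector products with $\mathcal O(k)$ columns, costing $\mathcal O(k^2 n_h)$ at a node of size $n_h$''; this is not the right operation or the right cost. What is actually needed (see line~\ref{step:fact} and Remark~\ref{rem:rhs}) is to multiply the rank-$k$ factor $\widetilde V^T$ of $A_t^{\mathrm{off}}$ (size $k\times n_h$) by the \emph{dense} matrix $X^{(1)}$ (size $n_h\times n_h$), which costs $\mathcal O(k n_h^2)$ per node. Summing over the $4^h$ nodes at level $h$ gives $\mathcal O(4^h k (n/2^h)^2)=\mathcal O(k n^2)$ per level, and then over the $\ell$ levels $\mathcal O(\ell k n^2)=\mathcal O(k\log(n)\,n^2)$. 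This is exactly how the paper derives the $k\log(n)\,n^2$ term, and it is what you gesture at with ``tracking these carefully'', but your stated per-node cost $\mathcal O(k^2 n_h)$ would instead sum to $\mathcal O(k^2 n^2)$ and would not produce the logarithmic factor.

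Finally, your ``main obstacle'' about nestedness keeping the off-diagonal ranks bounded is unnecessary: the assumption that $A_1,A_2$ have HSS rank $k$ already means, by definition, that every off-diagonal block at every level has rank at most $k$, so the rank of $UV^*$ is at most $2k$ throughout the recursion without invoking nestedness.
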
  
	\begin{proof}
	We only prove $(i)$	since $(ii)$ is obtained with an analogous argument. 
	Let us analyze the cost of Algorithm~\ref{alg:dac} at level $j$ of the
	recursion. For $j=\ell$ it solves $2^\ell \cdot 2^\ell$ Sylvester equations 
	of size $n_{\min}\times n_{\min}$ by means of Algorithm~\ref{alg:diag}; 
	this requires $\mathcal O(4^\ell\cdot n_{\min}^3)=\mathcal O(n_{\min}n^2)$ 
	flops. At level $j<\ell$, $4^j$ Sylvester equations of size 
	$\frac{n}{2^j}\times \frac{n}{2^j}$ and right-hand side of rank 
	(at most) $2k$ are solved via fADI. According to \eqref{eq:sylv-cost}, 
	the overall cost of these is $\mathcal O(2^jk^2sn)$. Finally, we need
	 to evaluate $4^j$ multiplications at line~\ref{lin:mult2d} which yields
	  a cost $\mathcal O(4^j k(n/2^j)^2)= \mathcal O(kn^2)$. Summing over 
	  all levels $j=0,\dots,\ell-1$ we get 
	  $\mathcal O(\sum_{j=0}^{\ell-1} (kn^2+2^jk^2sn))= \mathcal O(\ell kn^2 +2^{\ell}k^2s n)$. Noting that $\ell=\mathcal O(\log(n))$ provides the claim. 
	\end{proof}
	
	We now consider the cost of Algorithm~\ref{alg:dac2d} in the more general case $n_1 \neq n_2$. Without loss of generality, we assume
	$n_1=2^{\ell_1}n_{\min}> 2^{\ell_2}n_{\min}=n_2$; 
	Algorithm~\ref{alg:dac2d} begins with $\ell_1-\ell_2$ splittings
	 on the first mode. This generates $2^{\ell_1-\ell_2}$ subproblems 
	 of size $n_2\times n_2$, $2^{\ell_1-\ell_2}-1$ calls to 
	 \textsc{low\_rank\_sylv} and $2^{\ell_1-\ell_2}-1$ multiplications 
	 with a block vector at line~\ref{lin:mult}. In particular, we have 
	 $2^j$ calls  to \textsc{low\_rank\_sylv} for problems of size 
	 $n_1/2^j\times n_2$, $j=0,\dots, \ell_1-\ell_2-1$; this yields
	 an overall cost of $\mathcal O((\ell_1-\ell_2)sk^2n_1)$ 
	 and $\mathcal O((\ell_1-\ell_2)s^2k^2n_1)$ when fADI and rational Krylov 
	 are employed, respectively. Analogously, the multiplications
	 at line~\ref{lin:mult2d} of this initial phase require 
	 $\mathcal O((\ell_1-\ell_2)kn_1n_2)$ flops.
	Summing these costs to the estimate provided by Theorem~\ref{thm:2d-complexity}, multiplied by $2^{\ell_1-\ell_2}$, yields the following corollary.
	\begin{corollary}\label{cor:2d-complexity-diff}
		Under the assumptions of Theorem~\ref{thm:2d-complexity}, apart
		from $n_1=2^{\ell_1}n_{\min}\geq n_2=2^{\ell_2}n_{\min}$ with 
		$n_2\geq \log(n_1/n_2)$, we have that Algorithm~\ref{alg:dac2d} costs:
		   \begin{itemize}
			\item[$(i)$] $\mathcal O((k\log(n_1)+n_{\min}+ sk^2)n_1n_2)$ if \textsc{low\_rank\_sylv} implements $s$ steps of the fADI method,
			\item[$(ii)$] $\mathcal O((k\log(n_1)+n_{\min}+ s^2k^2)n_1n_2)$ if \textsc{low\_rank\_sylv} implements $s$ steps of the rational Krylov method.
		\end{itemize}
		\end{corollary}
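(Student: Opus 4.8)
The plan is to decompose the run of Algorithm~\ref{alg:dac2d} (for $n_1 \ge n_2$) into two stages: an \emph{unbalancing stage}, consisting of the $\ell_1 - \ell_2$ levels of recursion during which only the first mode is split because $n_1 > 2 n_2$, and a \emph{balanced stage}, consisting of the $2^{\ell_1 - \ell_2}$ independent calls to Algorithm~\ref{alg:dac2d-balanced} that it spawns, each on a subproblem of size (at most) $n_2 \times n_2$. For the balanced stage I would simply invoke Theorem~\ref{thm:2d-complexity} with $n = n_2$, which gives $\mathcal O((k\log(n_2) + n_{\min} + s k^2) n_2^2)$ per call for fADI and $\mathcal O((k\log(n_2) + n_{\min} + s^2 k^2) n_2^2)$ for rational Krylov, and then multiply by $2^{\ell_1 - \ell_2}$; using $2^{\ell_1 - \ell_2} n_2 = n_1$ and $\log(n_2) \le \log(n_1)$ this already has the form claimed in $(i)$ and $(ii)$.

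It then remains to check that the unbalancing stage is not more expensive. Here I would count costs level by level: at level $j$, for $j = 0, \ldots, \ell_1 - \ell_2 - 1$, the recursion holds $2^j$ subproblems of size $(n_1/2^j) \times n_2$, and each performs one multiplication $A_1^{\mathrm{off}} X^{(1)}$ (line~\ref{lin:mult2d}) and one call to \textsc{low\_rank\_sylv} on a Sylvester equation with a rank-$\le k$ right-hand side. Using the rectangular version of \eqref{eq:sylv-cost} --- where, as in the proof of Theorem~\ref{thm:2d-complexity}, the ULV factorizations and triangular/unitary HSS solves contribute $\mathcal O(k^2(n_1/2^j + n_2))$ and $\mathcal O(k(n_1/2^j + n_2))$ per column by the estimates of \cite{xia2010fast} --- a single fADI call costs $\mathcal O(k^2 s (n_1/2^j + n_2))$, so level $j$ contributes $\mathcal O(k^2 s n_1 + 2^j k^2 s n_2)$; summing over $j$ and using the geometric identity $\sum_j 2^j n_2 = \mathcal O(2^{\ell_1-\ell_2} n_2) = \mathcal O(n_1)$ gives $\mathcal O((\ell_1 - \ell_2) k^2 s n_1)$ for the whole stage. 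Each multiplication at level $j$ costs $\mathcal O(k (n_1/2^j) n_2)$ and there are $2^j$ of them, hence $\mathcal O(k n_1 n_2)$ per level and $\mathcal O((\ell_1 - \ell_2) k n_1 n_2)$ in total. For rational Krylov the only change is that a call to \textsc{low\_rank\_sylv} now costs $\mathcal O(k^2 s^2 (n_1/2^j + n_2))$, giving $\mathcal O((\ell_1 - \ell_2) k^2 s^2 n_1)$.

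Finally I would collapse the $\ell_1 - \ell_2$ factors. Since $\ell_1 - \ell_2 = \log_2(n_1/n_2)$, the hypothesis $n_2 \ge \log(n_1/n_2)$ gives $\ell_1 - \ell_2 = \mathcal O(n_2)$, so the fADI term $(\ell_1 - \ell_2) k^2 s n_1$ is $\mathcal O(k^2 s n_1 n_2)$ (and $(\ell_1 - \ell_2) k^2 s^2 n_1 = \mathcal O(k^2 s^2 n_1 n_2)$ for RK), which is absorbed into the $s k^2$ (resp.\ $s^2 k^2$) term of the target estimate; meanwhile $\ell_1 - \ell_2 \le \ell_1 = \mathcal O(\log n_1)$ turns the multiplication cost into $\mathcal O(k \log(n_1) n_1 n_2)$, absorbed into the $k\log(n_1)$ term. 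Adding the two stages then yields $(i)$ and $(ii)$. The argument is essentially bookkeeping of geometric sums; the only point requiring a small amount of care is the rectangular adaptation of \eqref{eq:sylv-cost}, and there the needed estimates are the same HSS building blocks already used for Theorem~\ref{thm:2d-complexity}.
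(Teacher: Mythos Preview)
Your proof is correct and follows essentially the same route as the paper: decompose into an unbalancing stage of $\ell_1-\ell_2$ levels (splitting only the first mode) and a balanced stage of $2^{\ell_1-\ell_2}$ calls to Algorithm~\ref{alg:dac2d-balanced}, invoke Theorem~\ref{thm:2d-complexity} for the latter, and count the low-rank solves and multiplications level by level for the former, finally absorbing the $\ell_1-\ell_2$ factors via $n_2\ge\log(n_1/n_2)$ and $\ell_1-\ell_2\le\log n_1$. The only cosmetic difference is that you track the $n_1/2^j$ and $n_2$ contributions to the rectangular fADI cost separately, whereas the paper tacitly uses $n_1/2^j\ge n_2$ at every level of the unbalancing stage to drop the $n_2$ term immediately.
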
 
	
	\section{Tensor Sylvester equations} \label{sec:tensors}
	We now proceed to describe the procedure sketched in the 
	introduction for $d > 2$. Initially, we assume that all
	dimensions
	are equal, 
	so that the splitting is applied to all modes at every
	step of the recursion.  
	
	We identify two major differences with respect to the 
	case $d = 2$, both concerning the update equation \eqref{eq:update-all}:
	\begin{equation*} \label{eq:update-all2}
		\delta \T X\times_1 A_1+\delta\T X\times_2 A_2+\dots+\delta\T X\times_d A_d 
		=- \sum_{t = 1}^d  \T X^{(1)} \times_t A_t^{\mathrm{off}}.
	\end{equation*}
	First, the latter equation cannot be 
	solved directly with a single call to 
	\textsc{low\_rank\_sylv}, since the right-hand side
	does not have a low-rank matricization. However, 
	the $t$-mode matricization of 
	the term $\T X^{(1)} \times_t A_t^{\mathrm{off}}$ 
	has rank bounded by $k$ for $t = 1, \ldots, d$. 
	Therefore,
	the update equation can be addressed by 
	$d$ separate calls to the low-rank solver,
	by splitting the right-hand side, and matricizing the $t$th term 
	in a way that separates the mode $t$ from the rest, as follows:
	\begin{equation} \label{eq:matr-t}
		A_t \delta X_t + 
		\delta X_t \sum_{j \neq t}
		  I \otimes \cdots \otimes I \otimes A_j \otimes I \otimes \cdots \otimes I 
		= - A_t^{\mathrm{off}} X^{(1)}, 
	\end{equation}
	where, in this case, $X^{(1)}$ denotes 
	the $t$-mode matricization of $\T X^{(1)}$. 
	
	Second, the solution of \eqref{eq:matr-t}
	by means of \textsc{low\_rank\_sylv} requires 
	solving shifted linear systems with a Kronecker sum of 
	$d - 1$ matrices, that is performed by recursively 
	calling the divide-and-conquer scheme. This generates 
	an additional layer of inexactness, that we have to take 
	into account in the error analysis. This makes the analysis 
	non-trivial, and requires results on the error propagation
	in \textsc{low\_rank\_sylv}; in the next section we address 
	this point in detail for fADI, that will be the method 
	of choice in our implementation. 
	
	In the case where the dimensions $n_i$ are unbalanced, 
	we follow a similar approach to the case $d = 2$. More precisely, 
	at each level 
	we only split on the $r$ dominant modes which
	satisfy $2n_i \geq \max_j  n_j $, and which are 
	larger than $n_{\min}$. This generates $2^r$ recursive 
	calls and $r$ update equations.
	
	We summarize the procedure in Algorithm~\ref{alg:dac}. 
\begin{remark}
Other recursive solvers for tensor and matrix equations (such as \textsc{recsy}~\cite{chen2020recursive}) adopt a different strategy and always split one mode at a time. Here we pursue the simultaneous splitting approach because it comes with a few computational advantages. In particular, the low-rank Sylvester solvers for the different modes can be run in
parallel and the depth of the recursion tree is reduced. We only resort on splitting one (or some) mode at a time when the dimensions are unbalanced because it facilitates the analysis of the computational complexity. 
\end{remark}
	
	\begin{algorithm}[H] 
		\small 
		\caption{}\label{alg:dac}
		\begin{algorithmic}[1]
			\Procedure{lyapnd\_d\&c}{$A_1,A_2,\dots, A_d,\T B, \epsilon$}
			\If{$\max_i n_i\leq n_{\min}$}
			\State\Return\Call{lyapnd\_diag}{$A_1,A_2,\dots, A_d,\T B$}
			\Else
			\State Permute the modes to have 
			  $2n_i \geq \max_j \{ n_j \}$
			  and $n_i \geq n_{\min}$, for $i \leq r$. 
			\For{$j_1, \ldots, j_r = 1,2$}
				\State $\T Y^{(j_1, \ldots, j_r)} \gets $\Call{lyapnd\_d\&c}{$A_1^{(j_1 j_1)},\dots, A_s^{(j_r j_r)},A_{r+1},\dots, A_d,\T B^{(j_1, \ldots, j_r)}, \epsilon$}
				\State Copy $\T Y^{(j_1, \ldots, j_r)}$ in the corresponding 
				  entries of $\T X^{(1)}$. 
			\EndFor
			\For{$j = 1, \ldots, r$}
				\State Retrieve a rank $k$ factorization $A_j^{\mathrm{off}}=U\widetilde V^T$
				\State $V\gets \Call{reshape}{-\T X^{(1)} \times_j \widetilde V, \prod_{i\neq j}n_i, k}$ \label{lin:mult}
				\State $\delta X_j \gets \Call{low\_rank\_sylv}{A_j, \sum_{i\neq j} I\otimes\dots \otimes I\otimes A_i\otimes I\otimes\dots\otimes I , U, V, \epsilon}$ \label{lin:lr-sylv}
				\State $\delta \T X_j \gets $ tensorize $\delta X_j$, inverting the $j$-mode matricization. 
			\EndFor
			\State \Return $\T X^{(1)} + \delta\T X_1 + \ldots + \delta \T X_r$.
			\EndIf
			\EndProcedure
		\end{algorithmic}
	\end{algorithm}
	
	\subsection{Error analysis}
	The use of an inexact solver for tensor Sylvester equations with 
	$d - 1$ modes in \textsc{low\_rank\_sylv} has an impact on the
	achievable accuracy of the method. Under the assumption that all the 
	update equations at line \ref{lin:lr-sylv} are solved with a relative 
	accuracy $\epsilon$ we can easily generalize the analysis
	performed in Section~\ref{sec:d=2-exact-equations}. 
	
	More specifically, we consider the 
	additive decomposition of $\tilde{\T X} = \tilde{\T X}^{(0)} = 
	\tilde{\T X}^{(\ell)} + \delta \tilde{\T X}^{(\ell-1)} + \ldots + 
	\delta \tilde{\T X}^{(0)}$, 
	where the equations solved by the algorithm are the following:
	\begin{align}
		\label{eq:sylvd-inexact-0}
		\sum_{t = 1}^d \widetilde{\T X}^{(\ell)} \times_{t} A_t^{(\ell)} &= \T B
		  + \T R^{(\ell)} \\ 
		\label{eq:sylvd-inexact-h}
		  \sum_{t = 1}^d \delta \widetilde{\T X}^{(h,j)} \times_t A_t^{(h)}
		  &= \tilde{\Xi}^{(h,j)} + \T R^{(h,j)}, 
	\end{align}
	with $A_t^{(h)}$ are the block-diagonal matrices defined 
	in \eqref{eq:hodlr-level-h}, 
	and $\delta \tilde {\T X}^{(h)} = 
	\delta \tilde {\T X}^{(h,1)} + \ldots + \delta \tilde {\T X}^{(h,d)}$, 
	and $\tilde \Xi^{(h,j)}$ is defined as 
	follows:
	\[
		\tilde \Xi^{(h,j)} := 
		- \tilde{\T X}^{(h -1)} \times_j (A_j^{(h)} - A_j^{(h+1)}). 
	\]
	We remark that 
	the matrix $A_j^{(h)} - A_j^{(h+1)}$ contains the off-diagonal blocks 
	that are present in $A_j^{(h)}$ but not in $A_j^{(h+1)}$. 
	When the dimensions $n_i$ are unbalanced we artificially 
	introduce some levels in the HSS partitioning (see Section~\ref{sec:unbalanced-2d}), 
	some of these may be the zero matrix. 
	Then, we state the higher-dimensional version of 
	Lemma~\ref{lem:residual}. 
	
	\begin{lemma}	\label{lem:residual-d>2}
		If the tensor Sylvester equations \eqref{eq:sylvd-inexact-0} 
		and \eqref{eq:sylvd-inexact-h} are solved 
		with the relative accuracies
		$\norm{\T{R^{(\ell)}}}_F \leq \epsilon \norm{\T B}_F$ and 
		$\norm{\T{R}^{(h,j)}}_F \leq \epsilon \norm{\tilde \Xi^{(h,j)}}_F$ and 
		$\kappa \epsilon < 1$,
		with $\kappa := \frac{\beta_1 + \ldots + \beta_d}{\alpha_1 + \ldots + \alpha_d}$,
		then the approximate solution $\tilde{\T X}$ returned by 
		Algorithm~\ref{alg:dac}
		satisfies
		\[
			\Big\lVert \sum_{i = 1}^d \tilde{\T X} 
			\times_i A_i - \T B \Big\rVert_F \leq 
			(\ell + 1)^2 \kappa \epsilon \norm{\T B}_F.
		\]
	\end{lemma}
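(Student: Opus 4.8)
The plan is to mirror the argument used for $d=2$ in Lemma~\ref{lem:residuals-rh} and Lemma~\ref{lem:residual}: setting $\T R^{(h)} := \T R^{(h,1)} + \ldots + \T R^{(h,d)}$ for $h < \ell$, I will show that the quantities $\norm{\T R^{(h)}}_F$ satisfy exactly the same scalar recurrence as in the matrix case, with the number of modes $d$ entering only through $\kappa = (\beta_1 + \ldots + \beta_d)/(\alpha_1 + \ldots + \alpha_d)$.

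First I would establish, by downward induction on $h$, the analogue of \eqref{eq:tilde-h}: that $\tilde{\T X}^{(h)} := \tilde{\T X}^{(\ell)} + \delta\tilde{\T X}^{(\ell-1)} + \ldots + \delta\tilde{\T X}^{(h)}$ satisfies
\[
  \sum_{t=1}^d \tilde{\T X}^{(h)} \times_t A_t^{(h)} = \T B + \T R^{(\ell)} + \T R^{(\ell-1)} + \ldots + \T R^{(h)}.
\]
The base case $h=\ell$ is \eqref{eq:sylvd-inexact-0}. For the inductive step I would write $\tilde{\T X}^{(h)} = \tilde{\T X}^{(h+1)} + \sum_{j=1}^d \delta\tilde{\T X}^{(h,j)}$, split $A_t^{(h)} = A_t^{(h+1)} + (A_t^{(h)} - A_t^{(h+1)})$ in the $\tilde{\T X}^{(h+1)}$ term, observe that by the definition of $\tilde\Xi^{(h,j)}$ one has $\sum_{t} \tilde{\T X}^{(h+1)} \times_t (A_t^{(h)} - A_t^{(h+1)}) = -\sum_{j} \tilde\Xi^{(h,j)}$, and let this cancel the $\tilde\Xi^{(h,j)}$ appearing on the right of \eqref{eq:sylvd-inexact-h}; the induction hypothesis applied to $\tilde{\T X}^{(h+1)}$ then closes the step. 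Since $A_t^{(0)} = A_t$, the case $h=0$ shows that the residual of $\tilde{\T X} = \tilde{\T X}^{(0)}$ equals $\T R^{(\ell)} + \ldots + \T R^{(0)}$.

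Next I would bound $\norm{\T R^{(h)}}_F$ for $h < \ell$. Matricizing $\tilde\Xi^{(h,j)}$ along mode $j$ gives $\norm{\tilde\Xi^{(h,j)}}_F \leq \norm{A_j^{(h)} - A_j^{(h+1)}}_2 \norm{\tilde{\T X}^{(h+1)}}_F \leq \beta_j \norm{\tilde{\T X}^{(h+1)}}_F$, using that $A_j^{(h)} - A_j^{(h+1)}$ is block diagonal whose blocks are the anti-block-diagonal parts of the level-$h$ diagonal blocks of $A_j$, each of spectral norm at most $\norm{A_j}_2 \leq \beta_j$ (a symmetric positive definite matrix dominates in norm each of its off-diagonal blocks). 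Hence $\norm{\T R^{(h)}}_F \leq \sum_{j} \epsilon \norm{\tilde\Xi^{(h,j)}}_F \leq \epsilon(\beta_1 + \ldots + \beta_d)\norm{\tilde{\T X}^{(h+1)}}_F$. Writing $\tilde{\T X}^{(h+1)} = \T X^{(h+1)} + (\tilde{\T X}^{(h+1)} - \T X^{(h+1)})$, the exact part obeys $\norm{\T X^{(h+1)}}_F \leq \norm{\T B}_F/(\alpha_1 + \ldots + \alpha_d)$, while subtracting the exact equation from the identity above shows $\tilde{\T X}^{(h+1)} - \T X^{(h+1)}$ solves a Kronecker-sum equation with right-hand side $\T R^{(\ell)} + \ldots + \T R^{(h+1)}$, hence has norm at most $(\norm{\T R^{(\ell)}}_F + \ldots + \norm{\T R^{(h+1)}}_F)/(\alpha_1 + \ldots + \alpha_d)$. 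Combining yields
\[
  \norm{\T R^{(h)}}_F \leq \kappa\epsilon\Big(\norm{\T B}_F + \textstyle\sum_{m=h+1}^{\ell} \norm{\T R^{(m)}}_F\Big),
\]
which, together with $\norm{\T R^{(\ell)}}_F \leq \epsilon\norm{\T B}_F$, is exactly the recurrence solved in Lemma~\ref{lem:residuals-rh}, so $\norm{\T R^{(h)}}_F \leq \kappa\epsilon(1+\epsilon)(1+\kappa\epsilon)^{\ell-h-1}\norm{\T B}_F$; summing over $h = 0, \ldots, \ell$ then reproduces the final computation in the proof of Lemma~\ref{lem:residual} and gives $(\ell+1)^2\kappa\epsilon\norm{\T B}_F$. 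The levels introduced artificially for unbalanced dimensions have $A_t^{(h)} - A_t^{(h+1)} = 0$, hence $\tilde\Xi^{(h,j)} = 0$ and $\T R^{(h)} = 0$, so taking $\ell = \max_i \ell_i$ is harmless, exactly as for $d=2$.

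The main obstacle is the bookkeeping in the inductive identity of the first step: the level-$h$ correction is itself a sum of $d$ tensors $\delta\tilde{\T X}^{(h,j)}$, each produced by a \emph{separate} inexact call to \textsc{low\_rank\_sylv} (which, for $d>2$, recursively invokes the whole divide-and-conquer scheme), and one must verify that the telescoping of the $\tilde\Xi^{(h,j)}$ against the $(A_t^{(h)} - A_t^{(h+1)})$ still closes cleanly when all $d$ modes are split simultaneously. Once this identity is in place, everything reduces to the scalar recurrence already solved for $d=2$, with $d$ absorbed into $\kappa$.
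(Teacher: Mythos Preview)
Your proposal is correct and follows essentially the same approach as the paper: establish the telescoping identity $\sum_t \tilde{\T X}^{(h)} \times_t A_t^{(h)} = \T B + \T R^{(\ell)} + \ldots + \T R^{(h)}$ by induction, bound $\norm{\T R^{(h)}}_F$ via $\norm{\tilde\Xi^{(h,j)}}_F \leq \beta_j \norm{\tilde{\T X}^{(h+1)}}_F$ and the decomposition $\tilde{\T X}^{(h+1)} = \T X^{(h+1)} + (\tilde{\T X}^{(h+1)} - \T X^{(h+1)})$, and then invoke the scalar recurrence already solved in Lemmas~\ref{lem:residuals-rh} and~\ref{lem:residual}. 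Your write-up is in fact more explicit than the paper's on the telescoping step and on why $\norm{A_j^{(h)} - A_j^{(h+1)}}_2 \leq \beta_j$, which the paper leaves implicit.
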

	
	\begin{proof}
		Let us consider $\tilde{\T X}^{(h)} = \tilde{\T X}^{(h)}
		+ \delta \tilde{\T X}^{(\ell-1)} + \ldots + \delta \tilde{\T X}^{(h)}$. 
		Following the same argument in the proof of Lemma~\ref{lem:residuals-rh}, 
		we obtain 
		\[
			\sum_{i = 1}^d \tilde{\T X}^{(h)} 
			\times_i A_i = \T B + 
			\T R^{(\ell)} + \ldots + \T R^{(h)}, 
		\]
		where $\T R^{(h)} := \sum_{j = 1}^d \T R^{(h,j)}$. Hence, we 
		bound 
		\begin{align*}
			\norm{\T R^{(h)}}_F &\leq \sum_{j = 1}^d \norm{\T R^{(h,j)}}_F
			\leq \epsilon \sum_{j = 1}^d \norm{\tilde \Xi^{(h,j)}}_F\\ 
			&\leq \epsilon \sum_{j = 1}^d \beta_j (\norm{\T X^{(h+1)}}_F
			+ \norm{\T X^{(h+1)} - \tilde {\T X}^{(h+1)}}_F) \\
			&\leq \epsilon \kappa (
				\norm{\T B}_F + \norm{\T R^{(\ell)}}_F + \ldots + \norm{\T R^{(h+1)}}_F
			). 
		\end{align*}
		By induction one can show that 
		\[
			\norm{\T R^{(h)}}_F \leq \kappa \epsilon (1 + \epsilon) 
			  (1 + \kappa \epsilon)^{\ell-h-1} \norm{\T B}_F,
		\]
		and the claim follows applying the same reasoning as in the proof 
		of Lemma~\ref{lem:residual}. 
	\end{proof}
	
	Lemma~\ref{lem:residual-d>2} ensures that if the update equations are 
	solved with uniform accuracy the growth in the residual is controlled 
	by a moderate factor, as in the matrix case. 
	
	We now investigate what can be ensured if we perform a constant number 
	of fADI steps throughout all levels of recursions (including the use of 
	the nested Sylvester solvers). This requires the development of technical
	tools for the analysis of factored ADI with inexact solves.

	\subsubsection{Factored ADI with inexact solves}
	Algorithm~\ref{alg:dac} solves update equations 
	that can be matricized as $A_1 \delta X + \delta X A_2 = UV^*$, where the factors $U$ and $V$ are 
	retrieved analogously to the matrix case, see Remark~\ref{rem:rhs}, and
	$A_1$ is the Kronecker sum of $d - 1$ matrices. In particular, 
	linear systems with $A_1$ are solved inexactly by a nested call to 
	Algorithm~\ref{alg:dac}. 
	In this section we investigate how the inexactness affects the 
	residual of the solution computed with $s$ steps of fADI. 
	
	We begin by recalling some properties of fADI. 
	Assume to have carried out $j+1$ steps 
	of fADI applied to the equation $A_1 \delta X + \delta X A_2 = UV^*$. 
	Then, the factors 
	$W_{j+1}, Y_{j+1}$ can be obtained by running a single 
	fADI iteration for the equation 
	\[
		A_1 \delta X + \delta X A_2 = -r_j(A_1) UV^* r_j(-A_2)^{-1}, 
		\quad 
		r_j(z):=\prod_{h=1}^j\frac{z-p_h}{z-q_h}, 
	\]
	using parameters $p_{j+1}, q_{j+1}$. 
	
	We now consider the sequence $\tilde W_j$ obtained by solving 
	the linear systems with $A_1$ inexactly:
	\begin{equation} \label{eq:Wj}
		(A_1 - q_{j+1}I) \widetilde W_{j+1} = 
			  (A_1 - p_{j} I) \widetilde W_j + \eta_{j+1}, 
			\quad 
			 (A_1 - q_1I) \widetilde W_1 = U + \eta_1,
	\end{equation}
	where $\norm{\eta_{j+1}}_2 \leq \epsilon \norm{U}$. 
	
	The following lemma quantifies the distance between 
	$\tilde W_j$ and $W_j$. Note that we make 
	the assumption $|z - p_j| \leq |z - q_j|$ over 
	$[\alpha_{1}, \beta_{1}]$, that is satisfied by the
	parameters of the Zolotarev functions, 
	as discussed in Section~\ref{sec:zolotarev-properties}. 
	
	\begin{lemma} \label{lem:tildeWjsmall}
		Let $A_1$ be positive definite, and 
		$p_j, q_j$ satisfying 
		$|z - p_j| \leq |z - q_j|$ for any
		$z \in [\alpha_{1}, \beta_{1}]$. 
		Let $\widetilde W_j$ be the sequence defined as in 
		\eqref{eq:Wj}
		Then, it holds 
		\[
			(A - p_{j} I) \widetilde W_j = r_j(A_1) U + M_j, \qquad 
			\norm{M_j}_F \leq j\epsilon \norm{U}_F, \qquad 
			r_j(z) := \prod_{i = 1}^j \frac{z - p_i}{z - q_i}.
		\]
	\end{lemma}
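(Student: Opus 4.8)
The plan is to prove the displayed identity together with the norm bound $\norm{M_j}_F \le j\epsilon\norm{U}_F$ simultaneously, by induction on $j$ (here and below $A_1$ stands for the coefficient on the left, the $A$ in the statement being a typo). The single structural fact driving the argument is that each correction factor $(A_1 - p_iI)(A_1 - q_iI)^{-1}$ is the rational function $z \mapsto (z-p_i)/(z-q_i)$ evaluated at $A_1$. Since $A_1$ is symmetric positive definite (being a Kronecker sum of SPD matrices) with spectrum contained in $[\alpha_1,\beta_1]$, the spectral theorem gives
$\norm{(A_1 - p_iI)(A_1 - q_iI)^{-1}}_2 = \max_{\lambda \in \mathrm{spec}(A_1)} |\lambda - p_i|/|\lambda - q_i| \le \max_{z\in[\alpha_1,\beta_1]} |z-p_i|/|z-q_i| \le 1$,
where the last inequality is exactly the hypothesis on $p_i,q_i$. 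One also observes that $A_1 - q_iI$ is invertible, since the poles $q_i$ lie in the negative interval $[-\beta_2,-\alpha_2]$ while $A_1$ has positive eigenvalues.

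For the base case $j=1$ I would left-multiply the defining relation $(A_1 - q_1I)\widetilde W_1 = U + \eta_1$ from \eqref{eq:Wj} by $(A_1 - p_1I)(A_1 - q_1I)^{-1} = r_1(A_1)$, obtaining $(A_1 - p_1I)\widetilde W_1 = r_1(A_1)U + r_1(A_1)\eta_1$. Hence $M_1 = r_1(A_1)\eta_1$, and $\norm{M_1}_F \le \norm{r_1(A_1)}_2\,\norm{\eta_1}_F \le \norm{\eta_1}_F \le \epsilon\norm{U}_F$ by the estimate above (using the perturbation bound on $\eta_1$ in Frobenius norm, consistent with the conclusion and with the downstream use).

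For the inductive step, assume $(A_1 - p_jI)\widetilde W_j = r_j(A_1)U + M_j$ with $\norm{M_j}_F \le j\epsilon\norm{U}_F$. Substituting this into the recursion $(A_1 - q_{j+1}I)\widetilde W_{j+1} = (A_1 - p_jI)\widetilde W_j + \eta_{j+1}$ and left-multiplying by $(A_1 - p_{j+1}I)(A_1 - q_{j+1}I)^{-1}$, I would use that all factors are rational functions of the single matrix $A_1$ and therefore commute, so that $(A_1 - p_{j+1}I)(A_1 - q_{j+1}I)^{-1} r_j(A_1) = r_{j+1}(A_1)$. This yields $(A_1 - p_{j+1}I)\widetilde W_{j+1} = r_{j+1}(A_1)U + M_{j+1}$ with $M_{j+1} := (A_1 - p_{j+1}I)(A_1 - q_{j+1}I)^{-1}(M_j + \eta_{j+1})$. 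Bounding the prefactor's spectral norm by $1$ and using $\norm{CX}_F \le \norm{C}_2\norm{X}_F$ gives $\norm{M_{j+1}}_F \le \norm{M_j}_F + \norm{\eta_{j+1}}_F \le j\epsilon\norm{U}_F + \epsilon\norm{U}_F = (j+1)\epsilon\norm{U}_F$, closing the induction.

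I do not anticipate a genuine obstacle here: once the non-amplification estimate $\norm{(A_1-p_iI)(A_1-q_iI)^{-1}}_2 \le 1$ is available, the argument is a short two-line induction. The only points deserving care are making the commutation $(A_1 - p_{j+1}I)(A_1 - q_{j+1}I)^{-1} r_j(A_1) = r_{j+1}(A_1)$ explicit, and keeping the norm bookkeeping consistent between the spectral norm of the rational matrix factors and the Frobenius norm of $U$, $\eta_i$, and $M_j$; in particular one should read the perturbation bound of \eqref{eq:Wj} in the Frobenius norm, as is implicitly done in the conclusion.
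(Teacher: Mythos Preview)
Your proposal is correct and follows essentially the same approach as the paper: an induction on $j$ that defines $M_{j+1} = (A_1 - p_{j+1}I)(A_1 - q_{j+1}I)^{-1}(M_j + \eta_{j+1})$ and bounds it using $\norm{(A_1-p_iI)(A_1-q_iI)^{-1}}_2 \le 1$ from the hypothesis on the shifts. Your treatment of the base case is in fact more explicit than the paper's, and your observations about the typo $A \mapsto A_1$ and the Frobenius-norm reading of the perturbation bound are accurate.
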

	
	\begin{proof}
		For $j = 1$, the claim follows directly from the assumptions. 
		For $j > 1$, we have 
		\begin{align*}
			(A_1 - p_{j+1}I) \widetilde W_{j+1} &= 
			(A_1 - p_{j+1}I) (A_1 - q_{j+1}I)^{-1}
			\left[ 
			  (A_1 - p_{j} I) \widetilde W_j + 
			  \eta_{j+1} \right] \\ 
		&= \frac{r_{j+1}}{r_{j}}(A_1)
		\left[ 
		  r_j(A_1) U + M_j + 
		  \eta_{j+1} \right] \\
		&= r_{j+1}(A_1) U + \frac{r_{j+1}}{r_{j}}(A_1) \left( M_{j} + \eta_{j+1} \right). 
		\end{align*}
		The claim follows by setting 
		$M_{j+1} := 
		\frac{r_{j+1}}{r_{j}}(A_1) \left(M_{j} + \eta_{j+1}\right)$, 
		and using the property 
		$r_{j+1}(z) / r_{j}(z) = |z - p_{j+1}| / |z - q_{j+1}| \leq 1$ 
		over the spectrum of $A_1$. 
	\end{proof}
	
	\begin{remark}
		We remark that the level of accuracy required in \eqref{eq:Wj} is guaranteed up to second 
		order terms in $\epsilon$, whenever the residual norm for the linear systems 
		$(A_1 - q_{j+1}I) \widetilde W_{j+1}= (A_1 - p_{j} I) \widetilde W_j + \eta_{j+1}$ 
		satisfies  $\norm{\eta_{j+1}}\leq \epsilon\norm{(A_1 - p_{j} I) \widetilde W_j}_F$. 
		Indeed, the argument used in the proof of Lemma~\ref{lem:tildeWjsmall} can be easily modified to get 
		$$\norm{M_j}_F\leq [(1+\epsilon)^j-1]\norm{U}_F=j\epsilon\norm{U}_F+\mathcal O(\epsilon^2).$$
		The slightly more restrictive choice made in \eqref{eq:Wj}, allows us to obtain more readable results.
	\end{remark}

	We can then quantify the impact of carrying out
	the fADI iteration with inexactness in one of the two sequences on the residual norm. 
	
	\begin{theorem}\label{thm:adi-res-inex}
		Let us consider the solution of \eqref{eq:lowrank-sylv}
		by means of the fADI method using shifts 
		satisfying the property $|z - p_j| \leq |z - q_j|$ 
		for any	$z \in [\alpha_{1}, \beta_{1}]$, 
		and let $\epsilon >0$ such that 
		the linear systems defining $\widetilde W_j$ are 
		solved inexactly as in \eqref{eq:Wj}. 
		Then, the computed solution 
		$\delta \widetilde X_s$ verifies:
		\[
			\norm{A_1 \delta \widetilde X_s + \delta \widetilde X_s A_2 - UV^*}_F \leq 
			  \epsilon_{s,\mathrm{ADI}}
			  + 
			  2 s \epsilon\norm{U}_F \norm{V}_2, 
		\]
		where 
		$\epsilon_{s,\mathrm{ADI}} := \norm{A_1 \delta X_s + \delta X_s A_2 - UV^*}_F$
		is the norm of the residual 
		after $s$ steps 
		of the fADI method in exact arithmetic.
		\end{theorem}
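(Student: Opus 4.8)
The plan is to isolate the inexactness of the $W$-sequence from an otherwise exact fADI run, and then show that, once the Sylvester operator is applied to the resulting error, it collapses into a short sum that does not involve the condition number. Since only $\widetilde W_j$ is perturbed while the $Y_j$ are exact, I would write $\delta\widetilde X_s = \delta X_s + E_s$ with $E_s := \sum_{j=1}^s (q_j - p_j)(\widetilde W_j - W_j)Y_j^*$, where $\delta X_s$ denotes the exact fADI iterate. The computed residual is then $R_s + A_1 E_s + E_s A_2$ with $R_s := A_1\delta X_s + \delta X_s A_2 - UV^*$, and $\norm{R_s}_F = \epsilon_{s,\mathrm{ADI}}$ by \eqref{eq:res-adi-full}; hence it suffices to prove $\norm{A_1 E_s + E_s A_2}_F \le 2s\epsilon\norm{U}_F\norm{V}_2$.

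For this, set $\Delta W_j := \widetilde W_j - W_j$. Lemma~\ref{lem:tildeWjsmall} gives $(A_1 - p_j I)\Delta W_j = M_j$, and the $Y$-recursion in \eqref{eq:adi-def} unrolls to $(A_2 + p_j I)Y_j = -r_{j-1}(-A_2)^{-1}V$, so that $A_1\Delta W_j = M_j + p_j\Delta W_j$ and $Y_j^*A_2 = -V^*r_{j-1}(-A_2)^{-1} - p_j Y_j^*$; substituting, the $p_j\Delta W_j Y_j^*$ terms cancel and
\[
  A_1 E_s + E_s A_2 = \sum_{j=1}^s (q_j - p_j)\left( M_j Y_j^* - \Delta W_j\, V^* r_{j-1}(-A_2)^{-1} \right).
\]
Unrolling the recursions exactly as in the proof of Lemma~\ref{lem:tildeWjsmall} gives $M_j = \sum_{i=1}^j \frac{r_j}{r_{i-1}}(A_1)\eta_i$ and $\Delta W_j = \sum_{i=1}^j \frac{r_{j-1}}{r_{i-1}}(A_1)(A_1 - q_j I)^{-1}\eta_i$. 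I would then exchange the order of summation and, for each fixed $i$, collapse the sum over $j \ge i$ by an Abel summation based on the two telescoping identities $(q_j - p_j)\frac{r_{j-1}(z)}{z - q_j} = r_j(z) - r_{j-1}(z)$ and $r_j(-A_2)^{-1} - r_{j-1}(-A_2)^{-1} = (q_j - p_j)r_{j-1}(-A_2)^{-1}(A_2 + p_j I)^{-1}$, together with $Y_j^* = -V^*r_{j-1}(-A_2)^{-1}(A_2 + p_j I)^{-1}$. The outcome is
\[
  A_1 E_s + E_s A_2 = \sum_{i=1}^s \eta_i\, V^* r_{i-1}(-A_2)^{-1} \;-\; \sum_{i=1}^s \Big(\prod_{h=i}^s \frac{A_1 - p_h}{A_1 - q_h}\Big)\eta_i\, V^* r_s(-A_2)^{-1}.
\]

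It then remains to take norms. The hypothesis $|z - p_j| \le |z - q_j|$ on $[\alpha_1,\beta_1]$ makes each factor $\frac{A_1 - p_h}{A_1 - q_h}$, hence each product of them, a contraction in the spectral norm; and $\norm{r_m(-A_2)^{-1}}_2 \le 1$ for every $m$ by the reflected Zolotarev property discussed in Section~\ref{sec:zolotarev-properties}, applied to the configuration $[\alpha_2,\beta_2]\cup[-\beta_1,-\alpha_1]$, whose optimal rational function has zeros $-q_h$ and poles $-p_h$. Combined with $\norm{\eta_i} \le \epsilon\norm{U}$ from the hypothesis, each of the two sums is bounded by $s\epsilon\norm{U}_F\norm{V}_2$, which gives the claim. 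The hard part is the telescoping step: a term-by-term estimate of $A_1 E_s + E_s A_2$ picks up a factor $q_j - p_j$ (which can be as large as $\beta_1 + \beta_2$) and a factor $\norm{Y_j}$, which would spoil the bound with powers of the condition number; it is only after inserting the closed forms of $\widetilde W_j$ and $Y_j$ and reorganizing the double sum over $(i,j)$ that these factors cancel and the clean $\mathcal O(s\epsilon\norm{U}_F\norm{V}_2)$ estimate survives. A secondary point is to keep the spectral/Frobenius norm bookkeeping consistent with the mixed norm $\norm{U}_F\norm{V}_2$ of the statement.
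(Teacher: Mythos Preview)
Your proof is correct and lands on exactly the same closed-form expression for the inexact residual as the paper does (once the paper expands $(A_1-p_sI)\widetilde W_s = r_s(A_1)U + M_s$ via Lemma~\ref{lem:tildeWjsmall}). The organization, however, is different. The paper keeps $\widetilde W_j$ as the primary object: it interprets each inexact step as one \emph{exact} fADI iteration applied to a perturbed right-hand side, writes a residual identity for every increment $\Delta_j = \delta\widetilde X_j - \delta\widetilde X_{j-1}$, and then telescopes $\sum_j (A_1\Delta_j+\Delta_jA_2)$ directly through the recursion $(A_1-p_{j+1}I)\widetilde W_{j+1} = \frac{r_{j+1}}{r_j}(A_1)\big[(A_1-p_jI)\widetilde W_j+\eta_{j+1}\big]$. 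You instead split off $E_s=\delta\widetilde X_s-\delta X_s$ at the outset, unroll $M_j$ and $\Delta W_j$ into explicit sums over the perturbations $\eta_i$, exchange the order of summation, and telescope over $j$ for each fixed $i$ using the two rational-function identities you state; indeed one checks that the $j$th summand equals $\phi_{j-1}\eta_iV^*\psi_{j-1}-\phi_j\eta_iV^*\psi_j$ with $\phi_j=\frac{r_j}{r_{i-1}}(A_1)$ and $\psi_j=r_j(-A_2)^{-1}$, so the telescope is exact. The paper's route is a little shorter because it avoids the double sum, while yours makes the contribution of each individual $\eta_i$ fully explicit and gives a slightly more transparent reason why the $(q_j-p_j)$ factors disappear.

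One side remark that applies equally to both arguments: the bound $\norm{r_m(-A_2)^{-1}}_2\le 1$, needed in the final norm estimate, is not a consequence of the stated hypothesis $|z-p_j|\le|z-q_j|$ on $[\alpha_1,\beta_1]$ alone; it requires the companion inequality $|z-p_j|\ge|z-q_j|$ on $[-\beta_2,-\alpha_2]$. This holds for Zolotarev shifts (your justification via the reflected configuration is correct), and the paper relies on it implicitly as well.
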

	\begin{proof}
		We indicate with $\delta \widetilde X_j$
		the inexact solution computed at step $j$ of fADI. 	
		We note that 
		$\delta \widetilde X_1$ corresponds to the 
		outcome of one exact 
		fADI iteration for the slightly modified right-hand side 
		$(U + \eta_1)V^*$; hence, by  \eqref{eq:res-adi-full}
		$\delta \widetilde X_1$ satisfies 
		the residual equation:
		\begin{align*}
			A_1 \delta \widetilde X_1 + \delta \widetilde X_1 A_2 - (U + \eta_1) V^* 
			&= 
			  - r_1(A_1) (U + \eta_1) V^* r_1(-A_2)^{-1},
		\end{align*}
		which allows to express the residual on the original equation 
		as follows:
		\begin{align*} 
			A_1 \delta \widetilde X_1 + \delta \widetilde X_1 A_2 - U V^*  
			&= -r_1(A_1) (U + \eta_1) V^* r_1(-A_2)^{-1} + \eta_1 V^*. 
		\end{align*}
		We now derive an analogous result for the 
		update $\Delta_{j+1} := \delta \widetilde X_{j+1} - \delta \widetilde X_j = 
		(q_{j+1} - p_{j+1}) \widetilde W_{j+1} Y_{j+1}^*$
		where $\Delta_1 =  \delta \widetilde  X_1$ by setting $\delta \widetilde X_0 = 0$. 
		We prove that for any $j$ the correction $\Delta_{j+1}$ satisfies 
		\begin{align*}
			A_1 \Delta_{j+1} + 
			\Delta_{j+1} A_2 &= 
			\left[
				(A_1 - p_jI) \widetilde{W}_j + \eta_{j+1}
			\right] V^* r_{j}(-A_2)^{-1} \\
			& -
			\frac{r_{j+1}}{r_j}(A_1) \left[
				(A_1 - p_{j} I) \widetilde{W}_{j} + \eta_{j+1}
			\right] V^* r_{j+1}(-A_2)^{-1}, \qquad 
			j \geq 1. 
		\end{align*}
		We verify the claim for $j = 1$; $\widetilde{W}_2$ is defined 
		by the relation 
		\[
			(A_1 - q_2 I) \widetilde{W}_2 = 
			  (A_1 - p_1 I) \widetilde W_1 + \eta_2 
			  = r_1(A_1) (U + \eta_1) + \eta_2.
		\]
		Hence, as discussed in the beginning of this proof, 
		$\widetilde{W}_2$ is the outcome of one 
		exact iteration of 
		fADI applied to the equation 
		$A_1 \delta X + \delta X A_2 - (r_1(A_1) (U + \eta_1) + \eta_2 )V^* r_1(-A_2)^{-1} = 0$. 
		Hence, thanks to \eqref{eq:res-adi-full} 
		the residual of the computed update 
		$\Delta_2$ verifies 
		\begin{align*}
			A_1 \Delta_2 + 
			\Delta_2 A_2 &- (r_1(A_1) (U + \eta_1) + \eta_2 )V^*r_1(-A_2)^{-1}\\
			&= -\frac{r_2}{r_1}(A_1) 
			  \left[
				r_1(A_1) (U + \eta_1) + \eta_2 )V^*
			  \right]
			  r_2(-A_2)^{-1} 
		\end{align*}
	
		Then, 
		the claim follows for $j = 1$ noting that  
		$(A - p_1 I) \widetilde W_1 = r_1(A_1) (U + \eta_1)$ 
		using the first relation in 
		\eqref{eq:Wj}.
		For $j > 1$ we have 
		$(A_1 - q_{j+1} I) \widetilde{W}_{j+1} = 
		(A - p_j I) \widetilde W_j + \eta_{j+1}$
		and with a similar argument $\Delta_{j+1}$ 
		is obtained as a single fADI iteration in exact arithmetic
		for the equation 
		$A_1 \delta X + \delta X A_2 - ((A - p_j I) \widetilde W_j + \eta_{j+1}) V^* r_j(-A_2)^{-1} = 0$, 
		and in view of the residual equation~\eqref{eq:res-adi-full}
		\begin{align*}
			A_1 \Delta_{j+1} + \Delta_{j+1} A_2 &- 
			  \left[
				(A_1 - p_j I) \widetilde W_j + \eta_{j+1}
			  \right] V^* r_j(-A_2)^{-1} \\
			  &= -
			  \frac{r_{j+1}}{r_j}(A_1) \left[
				(A_1 - p_j I) \widetilde W_j + \eta_{j+1}
			  \right] V^* r_{j+1}(-A_2)^{-1}. 
		\end{align*}
		We now write $\delta \widetilde X_s = \sum_{j = 1}^{s} \Delta_j$, 
		so that by linearity 
		the residual associated with $\delta \widetilde X_s$ satisfies 
		\begin{align*}
			A_1 \delta \widetilde X_s + \delta \widetilde X_s A_2 - UV^* &= 
			A_1 \Delta_1 + \Delta_1 A_2 - UV^* + 
			\sum_{j = 1}^{s-1} (A_1 \Delta_{j+1} + \Delta_{j+1} A_2) \\ 
			&= -r_1(A_1) (U + \eta_1) V^* r_1(-A_2)^{-1} + \eta_1 V^* \\ 
			&+ \sum_{j = 1}^{s-1} \Bigg\{ -\frac{r_{j+1}}{r_j}(A_1) \left[
				(A_1 - p_j I) \widetilde W_j + \eta_{j+1}
			  \right] V^* r_{j+1}(-A_2)^{-1} \\
			  &+ \left[
				(A_1 - p_j I) \widetilde W_j + \eta_{j+1}
			  \right] V^* r_j(-A_2)^{-1} \Bigg\}. 
		\end{align*}
		We now observe that, thanks to \eqref{eq:Wj} 
		\[
			\frac{r_{j+1}}{r_j}(A_1) 
			\left[  
				(A_1 - p_j I) \widetilde W_j - 
				\eta_{j+1} \right] = 
				(A_1 - p_{j+1} I) \widetilde W_{j+1}.
		\]
		Plugging this identity in the summation yields 
		\begin{align*} 
			& \sum_{j = 1}^{s-1} \resizebox{.95\linewidth}{!}{$\Bigg\{
				\left[
				(A_1 - p_j I) \widetilde W_j + \eta_{j+1}
			  \right] V^* r_j(-A_2)^{-1}  -\frac{r_{j+1}}{r_j}(A_1) \left[
				(A_1 - p_j I) \widetilde W_j + \eta_{j+1}
			  \right] V^* r_{j+1}(-A_2)^{-1}  
			  \Bigg\}$} \\ 
			&= \sum_{j = 1}^{s-1} \Bigg\{\left[
				(A_1 - p_j I) \widetilde W_j + \eta_{j+1}
			  \right] V^* r_j(-A_2)^{-1} -(A - p_{j+1} I) \widetilde W_{j+1} V^* r_{j+1}(-A_2)^{-1}  
			  \Bigg\} \\ 
			&=(A - p_1 I) \widetilde W_1 V^* r_1(-A_2)^{-1} -(A_1 - p_sI) \widetilde W_s V^* r_s(-A_2)^{-1}
			 + \sum_{j=1}^{s-1} \eta_{j+1} V^* r_{j}(-A_2)^{-1}
		\end{align*}
		Summing this with the residual yields 
		\[
			A_1 \delta \widetilde X_s + \delta \widetilde X_s A_2 - UV^* = 
			-(A - p_sI) \widetilde W_s V^* r_s(-A_2)^{-1} +
			\sum_{j = 0}^{s-1} \eta_{j+1} V^* r_{j}(-A_2)^{-1}, 
		\]
		where we have used the relation 
		$r_1(A_1)(U + \eta_1) = (A - p_1 I) \widetilde W_1$
		from \eqref{eq:Wj}. In view Lemma~\ref{lem:tildeWjsmall}
		we can write $(A - p_sI) \widetilde W_s = r_s(A_1) U + M_s$, 
		where $\norm{M_s}_F \leq s\epsilon$. Therefore, 
		\begin{align*}
			\norm{ A_1 \delta \widetilde X_s + \delta \widetilde X_s A_2 + UV^* }_F &\leq 
			\norm{r_s(A_1) UV^* r_s(-A_2)^{-1}}_F + s\epsilon \norm{U}_F \norm{V}_2 \\
			&+ 
			  \sum_{j = 0}^{s-1} \norm{\eta_{j+1} V^* r_{j}(-A_2)^{-1}}_F \\ 
			&\leq 
			\epsilon_{ADI, s} + 2 s \epsilon \norm{U}_F \norm{V}_2. 
		\end{align*}
		\end{proof}
	
		\begin{remark} \label{rem:normUV}
			We note that, the error in Theorem~\ref{thm:adi-res-inex} depends on 
			$\norm{U}_F \norm{V}_2$; this may be larger than $\norm{UV^*}_F$, 
			which is what we need to ensure the relative accuracy of the 
			algorithm. However, under the additional assumption 
			that $V$ has orthogonal columns, we have 
			$\norm{UV^*}_F = \norm{U}_F \norm{V}_2$. We can always 
			ensure that this condition is satisfied by computing a thin 
			QR factorization of $V$, and right-multiplying $U$ by $R^*$. This 
			does not increase the complexity of the algorithm.
		\end{remark}
	
	\subsubsection{Residual bounds with inexactness}
	
	We can now exploit the results on inexact fADI to control 
	the residual norm of the approximate solution returned by 
	Algorithm~\ref{alg:dac}, assuming that all the update equations 
	are solved with a fixed number $s$ of fADI steps with optimal shifts. 
	
	\begin{theorem} \label{thm:accuracy-d}
	Let $A_1,\dots, A_d\in\mathbb R^{n\times n}$, symmetric positive
	definite with spectrum contained in $[\alpha, \beta]$, and 
	$\kappa := \frac{\beta}{\alpha}$. Moreover, assume 
	that the $A_j$s are HSS matrices of HSS rank $k$, with a partitioning of 
	depth $\ell$.	 
	Let $\epsilon>0$ and suppose that \textsc{low\_rank\_sylv} uses fADI with
	the $s$ zeros and poles of the extremal rational function for
	$Z_s([\alpha,\beta],[-(d-1)\beta, -\alpha])$ as input parameters, 
	with right-hand side reorthogonalized as described in Remark~\ref{rem:normUV}.
	If
	$$
	s\geq \frac{1}{\pi^2}\log\left(2\frac{d\kappa}{\epsilon}\right)\log\left(8\frac{(\alpha+(d-1)\beta)(\alpha+\beta)}{d\alpha\beta}\right)
	$$ and Algorithm~\ref{alg:diag} solves the Sylvester equations 
	at  the base of the recursion with residual bounded 
	by $\epsilon$ times the norm of the right-hand side,
	then the solutions $\widetilde{\mathcal X}$ computed by 
	Algorithm~\ref{alg:dac} satisfies:
	\begin{align*}
	\norm{\widetilde{\mathcal X}\times_1 A_1+\dots+ \widetilde{\mathcal X}\times_d A_d-\T B}_F\leq
	  \left(
		  \left[ 
			 \kappa (\ell + 1)^{2} \right]^{d-1} (1 + 2s)^{d-2}\epsilon\right) \norm{\T B}_F.
	\end{align*}
	\end{theorem}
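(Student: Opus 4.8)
The plan is to prove the bound by induction on the number of modes $d$, using the two-mode analysis of Theorem~\ref{thm:2d-accuracy} (or rather Lemma~\ref{lem:residual-d>2}) as the base case, and the inexact-fADI analysis of Theorem~\ref{thm:adi-res-inex} to control the extra layer of inexactness introduced by the nested calls. The key observation is that Algorithm~\ref{alg:dac} for a $d$-mode problem is structurally identical to Algorithm~\ref{alg:dac2d}: at each level of recursion we split all modes, solve $2^r$ subproblems recursively, and then perform $r$ calls to \textsc{low\_rank\_sylv} on update equations of the matricized form $A_j \delta X_j + \delta X_j \hat A_j = U V^*$, where $\hat A_j$ is the Kronecker sum of the remaining $d-1$ matrices. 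Lemma~\ref{lem:residual-d>2} already tells us that, \emph{if} each such update equation is solved with relative residual accuracy $\epsilon'$, then the final residual is bounded by $(\ell+1)^2 \kappa \epsilon' \norm{\T B}_F$. So the entire task reduces to: determine what relative accuracy $\epsilon'$ is actually achieved on each update equation, given that (a) fADI is run with $s$ optimal Zolotarev shifts, and (b) the shifted linear systems inside fADI are solved only inexactly, by a nested call to the $(d-1)$-mode algorithm.

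First I would set up the induction hypothesis: for a $(d-1)$-mode problem with the stated shift count, Algorithm~\ref{alg:dac} returns a solution with relative residual at most $\theta_{d-1} := [\kappa(\ell+1)^2]^{d-2}(1+2s)^{d-3}\epsilon$. Then I would analyze a single update equation in the $d$-mode problem. The fADI iteration for it requires solving $s$ shifted systems with $A_j$ (cheap, direct, essentially exact) and $s$ shifted systems with $\hat A_j$ — and \emph{these} are the nested $(d-1)$-mode solves, performed with relative accuracy $\theta_{d-1}$ by the induction hypothesis. This is precisely the setting of Theorem~\ref{thm:adi-res-inex}: one of the two fADI sequences (the $W_j$ sequence associated with $A_j$, say, after swapping roles if needed so that the inexact coefficient plays the role of $A_1$) is computed with residual error $\eta_j$ satisfying $\norm{\eta_j} \le \theta_{d-1}\norm{U}$ — here I must invoke Remark~\ref{rem:normUV} to replace $\norm{U}_F\norm{V}_2$ by $\norm{UV^*}_F$, by reorthogonalizing the right-hand side. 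Theorem~\ref{thm:adi-res-inex} then gives the residual of the inexact fADI solution as $\epsilon_{s,\mathrm{ADI}} + 2s\,\theta_{d-1}\norm{UV^*}_F$. The exact-fADI term $\epsilon_{s,\mathrm{ADI}}$ is controlled by Lemma~\ref{lem:adi-res} applied with the spectral intervals $[\alpha,\beta]$ and $[-(d-1)\beta,-\alpha]$ (since $\hat A_j$ has spectrum in $[(d-1)\alpha,(d-1)\beta]$, hence $[\alpha,(d-1)\beta]$ suffices); the hypothesis on $s$ is exactly the one from Lemma~\ref{lem:adi-res} with the target accuracy tuned to $\epsilon/(d\kappa)$ or thereabouts, so that $\epsilon_{s,\mathrm{ADI}} \le \frac{\epsilon}{?}\norm{UV^*}_F$. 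Combining, each update equation is solved with relative accuracy $\epsilon' \lesssim (1+2s)\theta_{d-1}$, up to the constant absorbed by the choice of $s$.

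The final step is bookkeeping: feed $\epsilon' = (1+2s)\theta_{d-1}$ (modulo constants) into Lemma~\ref{lem:residual-d>2}, obtaining a final residual bounded by $(\ell+1)^2\kappa\cdot(1+2s)\theta_{d-1}\norm{\T B}_F = [\kappa(\ell+1)^2]^{d-1}(1+2s)^{d-2}\epsilon\,\norm{\T B}_F$, which closes the induction. I would also need to check the base case $d=2$ separately (where there is no nested inexactness, so $\theta_1 = \epsilon$ and the factor $(1+2s)^{d-2}=(1+2s)^0=1$ reduces the statement to Theorem~\ref{thm:2d-accuracy}), and verify that the shift count hypothesis propagates correctly through the recursion — the spectral interval $[-(d-1)\beta,-\alpha]$ for the matricized operator stays fixed in $d$, so the same $s$ works at every level, which is what makes the clean $(1+2s)^{d-2}$ bound possible. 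The main obstacle I anticipate is the careful matching of the Zolotarev accuracy estimate in Lemma~\ref{lem:adi-res} with the target: one must verify that the $\log\bigl(2 d\kappa/\epsilon\bigr)$ and $\log\bigl(8(\alpha+(d-1)\beta)(\alpha+\beta)/(d\alpha\beta)\bigr)$ factors in the hypothesis genuinely yield $\epsilon_{s,\mathrm{ADI}} + 2s\theta_{d-1}\norm{UV^*}_F \le (1+2s)\kappa^{-1}(\ell+1)^{-2}\cdot[\kappa(\ell+1)^2]^{d-1}(1+2s)^{d-2}\epsilon\,\norm{UV^*}_F$ after Lemma~\ref{lem:residual-d>2} is applied — i.e. tracking precisely which constants get absorbed where, and confirming the $\gamma$-parameter in Lemma~\ref{lem:zol} for the configuration $[\alpha,\beta]\cup[-(d-1)\beta,-\alpha]$ evaluates to $(\alpha+(d-1)\beta)(\alpha+\beta)/\bigl(d\alpha\beta\bigr)$ (using $\alpha_1=\alpha$, $\beta_1=\beta$, $\alpha_2=\alpha$, $\beta_2=(d-1)\beta$, giving denominator $(\alpha+\alpha)(\beta+(d-1)\beta)=2\alpha\cdot d\beta$, numerator $(\alpha+(d-1)\beta)(\alpha+\beta)$ — so the factor $16\gamma$ becomes $8(\alpha+(d-1)\beta)(\alpha+\beta)/(d\alpha\beta)$, matching the hypothesis).
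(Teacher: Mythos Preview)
Your proposal is correct and follows essentially the same route as the paper's proof: both argue by induction on $d$, using Lemma~\ref{lem:residual-d>2} to pass from the accuracy $\epsilon'$ on the update equations to the final residual $(\ell+1)^2\kappa\epsilon'\norm{\T B}_F$, and using Theorem~\ref{thm:adi-res-inex} (together with the reorthogonalization of Remark~\ref{rem:normUV}) to bound $\epsilon'$ by $(1+2s)$ times the $(d-1)$-dimensional accuracy plus the exact-fADI residual, the latter being absorbed since $Z_s\le\epsilon\le\theta_{d-1}$. Your explicit verification of the $\gamma$-parameter and of the role-swap needed to place the inexact Kronecker-sum coefficient in the $W_j$ sequence are details the paper glosses over, so your write-up is in fact slightly more careful than the original.
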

	\begin{proof}
		Let $\epsilon_{\mathrm{lr}, d}$ be the relative residual at which the
		low-rank update equations with $d$ modes 
		are solved in the recursion and let $\epsilon_d:= (\ell+1)^2 \kappa \epsilon_{\mathrm{lr}, d}$.
		Note that, thanks to Lemma~\ref{lem:residual-d>2}, we have
		\[
			\norm{\widetilde{\mathcal X}\times_1 A_1+\dots+ 
			\widetilde{\mathcal X}\times_d A_d-\T B}_F 
			\leq 
			\epsilon_d \norm{\T B}_F , 
		\]
		so that $\epsilon_d$ is an upper bound for the relative
		residual of the target equation. Moreover, using the error bound for inexact fADI 
		of Theorem~\ref{thm:adi-res-inex}, we can write $\epsilon_{\mathrm{lr}, d} \leq (1 + 2s) \epsilon_{d-1}$, 
		which implies 
		\[
			\epsilon_d \leq \begin{cases}
				(\ell+1)^2 \kappa \epsilon 
				& d = 2 \ \ (\text{Theorem~\ref{thm:2d-accuracy}})
				\\
				(\ell+1)^2 \kappa (1 + 2s) \epsilon_{d-1}, &
				d \geq 3. 	
			\end{cases}, 
		\]
		where $\epsilon$ is $Z_s([ \alpha, \beta ], [-(d-1)\beta, -\alpha])$. 
		Expanding the recursion  yields the sought bound. 
		\end{proof}
	
		Theorem~\ref{thm:accuracy-d} bounds the residual error with a constant 
		depending on $\kappa^{d-1}$, which can often be pessimistic. This term 
		arises when bounding $\norm{\Xi^{(h,j)}}_F$ with $\norm{A_j^{(h)}}_2$ 
		multiplied by $\norm{\T X^{(h+1)}}_F$. When the $A_j$s are M-matrices, 
		this can be improved, by replacing $\kappa$ with $\sqrt{\kappa}$. 
	
		\begin{corollary}
			Under the same hypotheses of Theorem~\ref{thm:accuracy-d} and the 
			additional assumption that the $A_t$s are symmetric positive definite $M$-matrices, we have 
			\[
				\norm{\widetilde{\mathcal X}\times_1 A_1+\dots+ \widetilde{\mathcal X}\times_d A_d-\T B}_F\leq
				\left(
					\left[ 
						d \sqrt{\kappa} (\ell + 1)^{2} \right]^{d-1} (1 + 2s)^{d-2}\epsilon\right) \norm{\T B}_F.
			\]
		\end{corollary}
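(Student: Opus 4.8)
The plan is to retrace the proof of Theorem~\ref{thm:accuracy-d} almost verbatim, changing only the estimate used to control the norms of the intermediate right-hand sides $\widetilde\Xi^{(h,j)}$. In Theorem~\ref{thm:accuracy-d} the factor $\kappa$ enters because one bounds $\norm{\widetilde\Xi^{(h,j)}}_F \le \beta_j\norm{\widetilde{\T X}^{(h+1)}}_F$ and then $\norm{\widetilde{\T X}^{(h+1)}}_F \le \tfrac{1}{d\alpha}\bigl(\norm{\T B}_F + \norm{\T R^{(\ell)}}_F + \dots + \norm{\T R^{(h+1)}}_F\bigr)$, the latter being obtained from the tensor analogue of \eqref{eq:tilde-h} used in the proof of Lemma~\ref{lem:residual-d>2}. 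The aim is to bound in one shot, by $\sqrt{\kappa}$ in spectral norm, the composite linear map carrying that right-hand side to $\widetilde\Xi^{(h,j)}$, exactly as in Lemma~\ref{lem:m-matrices}, at the price of an extra factor $d$ coming from the additive splitting of the update equation into $d$ mode-wise subproblems.

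First I would establish the $d$-dimensional counterpart of Lemma~\ref{lem:m-matrices}. Fix a mode $j$; recall that $\widetilde\Xi^{(h,j)}$ is obtained from $\widetilde{\T X}^{(h+1)}$ by multiplication in mode $j$ with $-(A_j^{(h)} - A_j^{(h+1)})$, and that $\widetilde{\T X}^{(h+1)}$ solves $\sum_i\widetilde{\T X}^{(h+1)}\times_i A_i^{(h+1)} = \T B + \T R^{(\ell)} + \dots + \T R^{(h+1)}$. Matricizing in mode $j$, the matricization $X_j$ of $\widetilde{\T X}^{(h+1)}$ solves $A_j^{(h+1)} X_j + X_j C_j^{(h+1)} = \widetilde B_j$ with $C_j^{(h+1)} := \sum_{i\ne j} I\otimes\dots\otimes A_i^{(h+1)}\otimes\dots\otimes I$ and $\widetilde B_j$ the $j$-mode matricization of $\T B + \T R^{(\ell)} + \dots + \T R^{(h+1)}$; hence the matricization of $\widetilde\Xi^{(h,j)}$ equals $\mathcal N_j\mathcal M_j^{-1}\widetilde B_j$, where $\mathcal M_j : Y\mapsto A_j^{(h+1)}Y + YC_j^{(h+1)}$ and $\mathcal N_j : Y\mapsto -(A_j^{(h)} - A_j^{(h+1)})Y$. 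Since each $A_t$ is an SPD $M$-matrix, the block-diagonal submatrices $A_t^{(h)}, A_t^{(h+1)}$ are SPD $M$-matrices with spectrum in $[\alpha,\beta]$ (principal submatrices of $M$-matrices remain $M$-matrices, and the spectral enclosure survives eigenvalue interlacing), while $-(A_j^{(h)} - A_j^{(h+1)}) \ge 0$ entrywise because it collects off-diagonal blocks of the $M$-matrix $A_j$. As matrices, $\mathcal M_j = I\otimes A_j^{(h+1)} + C_j^{(h+1)}\otimes I$ and $\mathcal M_j - \mathcal N_j = I\otimes A_j^{(h)} + C_j^{(h+1)}\otimes I$ are Kronecker sums of $M$-matrices, hence nonsingular $M$-matrices with nonnegative inverse, and $\mathcal N_j \ge 0$; therefore $\mathcal M_j - \mathcal N_j$ is a regular splitting and $\rho(\mathcal M_j^{-1}\mathcal N_j) < 1$. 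As in Lemma~\ref{lem:m-matrices}, the factorization $\norm{\mathcal N_j\mathcal M_j^{-1}}_2 \le \norm{\mathcal M_j^{1/2}}_2\norm{\mathcal M_j^{-1/2}\mathcal N_j\mathcal M_j^{-1/2}}_2\norm{\mathcal M_j^{-1/2}}_2$, the symmetry of the middle factor and its similarity to $\mathcal M_j^{-1}\mathcal N_j$ (so it has spectral norm $< 1$), and the enclosure $\mathrm{spec}(\mathcal M_j)\subseteq[d\alpha,d\beta]$ (a Kronecker sum of matrices with spectra in $[\alpha,\beta]$ and $[(d-1)\alpha,(d-1)\beta]$) give $\norm{\widetilde\Xi^{(h,j)}}_F \le \sqrt{\kappa}\,\norm{\widetilde B_j}_F \le \sqrt{\kappa}\bigl(\norm{\T B}_F + \norm{\T R^{(\ell)}}_F + \dots + \norm{\T R^{(h+1)}}_F\bigr)$.

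Then I would substitute this into the recursion in the proof of Lemma~\ref{lem:residual-d>2}: summing over the $d$ modes, $\norm{\T R^{(h)}}_F \le \sum_j\norm{\T R^{(h,j)}}_F \le \epsilon\sum_j\norm{\widetilde\Xi^{(h,j)}}_F \le d\sqrt{\kappa}\,\epsilon\bigl(\norm{\T B}_F + \sum_{m=h+1}^{\ell}\norm{\T R^{(m)}}_F\bigr)$, i.e.\ the recursion of Lemma~\ref{lem:residuals-rh} with $\kappa$ replaced by $d\sqrt{\kappa}$. The same induction, followed by the closing estimate of Lemma~\ref{lem:residual} (and the analogous smallness condition on $d\sqrt{\kappa}\,\epsilon$), bounds the residual of the $d$-mode problem solved with update accuracy $\epsilon_{\mathrm{lr},d}$ by $(\ell+1)^2 d\sqrt{\kappa}\,\epsilon_{\mathrm{lr},d}\norm{\T B}_F$. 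Coupling this with the inexact-fADI bound $\epsilon_{\mathrm{lr},d} \le (1+2s)\epsilon_{d-1}$ of Theorem~\ref{thm:adi-res-inex} (after the reorthogonalization of Remark~\ref{rem:normUV}) yields $\epsilon_d \le (\ell+1)^2 d\sqrt{\kappa}(1+2s)\epsilon_{d-1}$ for $d\ge3$, with base case $\epsilon_2 \le (\ell+1)^2\sqrt{\kappa}\,\epsilon$ from Corollary~\ref{cor:accuracy-mmatrices}. Unrolling the recurrence gives $\epsilon_d \le (\sqrt{\kappa})^{d-1}(\ell+1)^{2(d-1)}(1+2s)^{d-2}\tfrac{d!}{2}\,\epsilon$, and since $\tfrac{d!}{2}\le d^{d-1}$ this is at most $\bigl[d\sqrt{\kappa}(\ell+1)^2\bigr]^{d-1}(1+2s)^{d-2}\epsilon$, which is the asserted bound.

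The step I expect to be the main obstacle is the $d$-dimensional $M$-matrix argument of the second paragraph: one has to check carefully that the operator $\mathcal M_j$, which couples one mode at level $h+1$ with the Kronecker sum of the remaining modes, is a genuine nonsingular $M$-matrix and that its spectrum stays confined to $[d\alpha,d\beta]$. This rests on the stability of the $M$-matrix property, and of the spectral enclosure, under passage to principal (block-diagonal) submatrices and to Kronecker sums, which should be stated explicitly. Everything else is a routine transcription of the matrix-case reasoning already employed in the proofs of Theorem~\ref{thm:accuracy-d} and Corollary~\ref{cor:accuracy-mmatrices}.
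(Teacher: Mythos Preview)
Your proposal is correct and follows essentially the same route as the paper: extend the $M$-matrix argument of Lemma~\ref{lem:m-matrices} to the $d$-mode setting to replace $\kappa$ by $\sqrt{\kappa}$ in the bound for $\norm{\widetilde\Xi^{(h,j)}}_F$, absorb the factor $d$ from summing over the modes, and then rerun the recursion of Lemma~\ref{lem:residual-d>2} and Theorem~\ref{thm:accuracy-d} with $\kappa$ replaced by $d\sqrt\kappa$. The only cosmetic difference is that the paper bounds $\norm{\Xi^{(h,j)}}_F$ and $\norm{\widetilde\Xi^{(h,j)}-\Xi^{(h,j)}}_F$ separately (as in Lemma~\ref{lem:m-matrices}) rather than bounding $\norm{\widetilde\Xi^{(h,j)}}_F$ in one shot, and it simply invokes Theorem~\ref{thm:accuracy-d} at the end instead of unrolling the recursion and using $d!/2\le d^{d-1}$; both lead to the same inequality.
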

		\begin{proof}
			By means of the same argument used in Lemma~\ref{lem:m-matrices}
			we have 
			that $\norm{\Xi^{(h,j)}} \leq \sqrt{\kappa} \norm{\T B}_F$
			and $\norm{\Xi^{(h,j)} - \tilde \Xi^{(h,j)}}_F \leq \sqrt{\kappa} 
			\norm{\T R^{(\ell)} + \ldots + \T R^{(h+1)}}_F$. Plugging these 
			bounds in the proof of Lemma~\ref{lem:residual-d>2} yields 
			the inequality 
			\begin{align*}
				\norm{\T R^{(h)}}_F &\leq 
				  \epsilon \sum_{j=1}^d 
				  \left[ 
					  \norm{\Xi^{(h,j)}}_F + \norm{\tilde \Xi^{(h,j)} - \Xi^{(h,j)}}_F
				  \right] \\
				  &\leq 
				  d \epsilon \sqrt{\kappa} \left[
					  \norm{\T B}_F + \norm{\T R^{(\ell)}}_F + \ldots + \norm{\T R^{(h+1)}}_F
				  \right].
			\end{align*}
			Then, following the same steps of Lemma~\ref{lem:residual-d>2} yields 
			\[
				\norm{\T R^{(h)}}_F \leq 
				  d \sqrt{\kappa} \epsilon (1 + \epsilon) (1 + d\sqrt{\kappa} \epsilon)^{\ell-h-1} 
				  \norm{\T B}_F.
			\]
			Using this bound in Theorem~\ref{thm:accuracy-d} yields the claim. 
		\end{proof}

	\subsection{Complexity analysis}
	Theorem~\ref{thm:2d-complexity} can be generalized to the $d$-dimensional 
	case, providing a complexity analysis when nested solves are used. 
	
	\begin{theorem} \label{thm:3d-complexity}
		Let $A_i  \in\mathbb R^{n_i\times n_i}$  
		$n_i = 2^{\ell_i} n_{\min}$, with $n_1\geq n_2\geq \dots \geq n_d$, $n_d\geq skd$, $n_d\geq \log(n_1/n_d)$, and assume that $A_i$ are 
		HSS matrices of HSS rank $k$, 
		with a partitioning of depth $\ell_i$ obtained by halving
		the dimension at every level, for $i=1,\dots,d$. Then, Algorithm~\ref{alg:dac} costs:
		\begin{itemize}
			\item[$(i)$] $\mathcal O((k(d + \log(n_1))+n_{\min}+ sk^2)n_1\dots n_d)$ if \textsc{low\_rank\_sylv} implements $s$ steps of the fADI method,
			\item[$(ii)$] $\mathcal O((k(d + \log(n_1))+n_{\min}+ s^2k^2)n_1\dots n_d)$ if \textsc{low\_rank\_sylv} implements $s$ steps of the rational Krylov method.
		\end{itemize}
	\end{theorem}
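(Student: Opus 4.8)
The plan is to argue by induction on the number of modes $d$, with the base case $d=2$ supplied by Theorem~\ref{thm:2d-complexity} (when $n_1=n_2$) and Corollary~\ref{cor:2d-complexity-diff} (when $n_1\geq n_2$). As in the two-dimensional analysis I would first dispose of the balanced case $n:=n_1=\dots=n_d$, where all cluster trees have the same depth $\ell=\mathcal O(\log(n/n_{\min}))$ and Algorithm~\ref{alg:dac} splits all $d$ modes at every step, and only afterwards reduce the unbalanced case to it.

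For the inductive step, fix a node of the divide-and-conquer tree at level $h$, where the subproblem has coefficients of size $m:=n/2^h$, and split the work done there into four parts: (i) the $2^d$ recursive calls on size-$(m/2)$ subproblems, charged to the recursion itself; (ii) the $r\leq d$ tensor-times-matrix products at line~\ref{lin:mult}, each a single matrix product costing $\mathcal O(k\,m^d)$; (iii) inside each of the $r$ calls to \textsc{low\_rank\_sylv}, the $W$-sequence of \eqref{eq:adi-def}, which only involves HSS linear systems with the coefficient $A_t$ of \eqref{eq:matr-t} and therefore costs $\mathcal O(\mathcal C_{ADI}(m,s,k))=\mathcal O(sk^2 m)$ (resp.\ $\mathcal O(\mathcal C_{RK}(m,s,k))=\mathcal O(s^2k^2 m)$ for rational Krylov, where the projected equation \eqref{eq:proj-sylv} is negligible); and (iv) the genuinely new contribution, the companion sequence involving the second coefficient, the Kronecker sum of the $d-1$ matrices $A_j$, $j\neq t$. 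Each fADI step (resp.\ RK shift) requires one shifted solve with this Kronecker sum and $\mathcal O(k)$ right-hand side columns; absorbing the shift into one of the $A_j$ keeps it symmetric positive definite and HSS, so each such solve is an instance of the theorem with $d-1$ modes and size $m^{d-1}$, to which the inductive hypothesis applies, giving a per-node cost for (iv) of $\mathcal O\bigl(sk\,\mathcal C_{d-1}\,m^{d-1}\bigr)$, where $\mathcal C_{d-1}$ denotes the per-element cost predicted by the theorem in $d-1$ modes (monotone in the size, hence bounded by its value at $n$).

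Summing the per-node costs over the $2^{dh}$ nodes at level $h$ and then over $h=0,\dots,\ell-1$ turns every contribution into a geometric series: the products at line~\ref{lin:mult} sum to $\mathcal O(k\log(n)\,n^d)$, the base-case diagonalizations run by Algorithm~\ref{alg:diag} on the $\mathcal O((n/n_{\min})^d)$ leaves of size $n_{\min}$ sum to $\mathcal O(n_{\min}\,n^d)$, part (iii) sums to $\mathcal O(sk^2\,n^d)$, and the nested solves (iv) sum to $\mathcal O\bigl(\tfrac{sk}{n_{\min}}\,\mathcal C_{d-1}\,n^d\bigr)$. Here lies the main obstacle: the factor $\tfrac{sk}{n_{\min}}$ multiplies $\mathcal C_{d-1}$, which already carries an $sk^2$ term, so without a size assumption this would compound to an $s^{d-1}k^d$ blow-up through the $d-2$ layers of nesting. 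The hypothesis $n_d\geq skd$ (together with $n_{\min}=\Theta(n_d)$) is exactly what makes $sk/n_{\min}=\mathcal O(1/d)$, so that the nested contribution is dominated by $\mathcal C_{d-1}$ itself; the resulting recursion for the per-element cost is essentially additive, and unrolling it from the base value $\mathcal C_2=\mathcal O(k\log n+n_{\min}+sk^2)$ down to $d$ modes collapses to the claimed bracket $\mathcal O(k(d+\log n)+n_{\min}+sk^2)$, and likewise to $\mathcal O(k(d+\log n)+n_{\min}+s^2k^2)$ for rational Krylov.

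Finally, for the unbalanced case I would follow Section~\ref{sec:unbalanced-2d}: the first $\ell_1-\ell_d$ levels split only the $r<d$ dominant modes, so each such node produces only $\mathcal O(1)$ recursive calls, one update equation, and one product at line~\ref{lin:mult}; bounding these as above and summing the corresponding geometric series — where the assumption $n_d\geq\log(n_1/n_d)$ guarantees that this preliminary phase is not dominant — and then applying the balanced estimate to the $2^{\ell_1-\ell_d}$ resulting subproblems of size $n_d\times\dots\times n_d$, adds up to $\mathcal O\bigl((k(d+\log n_1)+n_{\min}+sk^2)\,n_1\cdots n_d\bigr)$, which is the claim; case (ii) is identical with $\mathcal C_{ADI}$ replaced by $\mathcal C_{RK}$ from \eqref{eq:sylv-cost} throughout.
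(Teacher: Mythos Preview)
Your overall strategy---induction on $d$ with the $d$-dimensional divide-and-conquer fully unrolled level by level---differs from the paper's, which runs a \emph{bivariate} induction on $(d,\ell_1)$ and analyses only one step of the outer recursion at a time: the $2^{d_1}$ subproblems are dispatched by the inductive hypothesis on $\ell_1$, the nested $(d{-}1)$-mode solves by the inductive hypothesis on $d$, and the absorption $sk\,d_1\le sk\,d\le n_d$ is invoked \emph{once}, at the top level, to turn $sk\,d_1\cdot\prod_{j=1}^{d-1}n_j$ into $\prod_{j=1}^{d}n_j$.

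Your unrolled variant contains a genuine gap. After summing contribution~(iv) over the $2^{dh}$ nodes at each level $h=0,\dots,\ell-1$ you correctly obtain a geometric series totalling $\mathcal O\bigl(\tfrac{sk}{n_{\min}}\,\mathcal C_{d-1}\,n^d\bigr)$, and you then write ``The hypothesis $n_d\ge skd$ (together with $n_{\min}=\Theta(n_d)$) is exactly what makes $sk/n_{\min}=\mathcal O(1/d)$.'' But $n_{\min}=\Theta(n_d)$ is nowhere assumed and is false in general: in the very balanced case you start from, $n_d=n$ while $n_{\min}$ is a fixed small parameter, so the depth $\ell=\log_2(n/n_{\min})$ is unbounded and $sk/n_{\min}$ is in no way controlled by $n\ge skd$. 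Without that false identification your geometric sum is off by a factor $n/n_{\min}$, and unrolling the induction on $d$ would compound this across the $d-2$ layers of nesting---precisely the blow-up you were trying to avoid. The paper sidesteps the entire summation: by invoking the inductive hypothesis for $(d,\ell_1-1)$ directly on the subproblems, all nested solves occurring at lower levels of the outer recursion are already included in that bound, so the only place where the factor $sk\,d$ must be absorbed is the single top-level call to \textsc{low\_rank\_sylv}, where the original smallest mode $n_d$ (not $n_{\min}$) is still available.
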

	\begin{proof}
		We only prove $(i)$ because $(ii)$ is completely analogous.
		
		 Let us assume that we have $r$ different mode sizes and each of those occurs $d_h$ times, i.e.:
		\begin{align*}
		\underbrace{n_1=n_2=\dots=n_{i_1}}_{d_1}> \underbrace{n_{i_1+1}=\dots=n_{i_2}}_{d_2}>\dots >\underbrace{n_{i_{r-1}+1}=\dots=n_{i_r}}_{d_r}.
		\end{align*} We proceed by (bivariate) induction over 
		$d\geq 2$ and $\ell_1\geq 0$; the cases $d=2$ and $\ell_1\geq 0$  
		are given by Theorem~\ref{thm:2d-complexity} and 
		Corollary~\ref{cor:2d-complexity-diff}. For $d>2$ and $\ell_1=0$, 
		we have that $n:=n_1=\dots=n_d=n_{\min}$ and 
		Algorithm~\ref{alg:dac} is equivalent to Algorithm~\ref{alg:diag} 
		whose cost is $\mathcal O(n^{d+1})= \mathcal O(n_{\min}n^d)$; so also 
		in this case the claim is true. For $d>2$ and $\ell_1>0$, the algorithm 
		begins by  halving the first $d_1$ modes generating $2^{d_1}$ 
		subproblems 
		having dominant size $n_1/2=2^{\ell_1-1}n_{\min}$. By induction these 
		subproblems cost
		\begin{align*}
		\mathcal O&\left(2^{d_1}(k(d + \log(n_{1}/2))+n_{\min}+sk^2)(n_{i_1}/2)^{d_1}n_{i_2}^{d_2}n_{i_3}^{d_3}\dots n_{i_r}^{d_r}\right)\\ 
		&=O\left((k(d + \log(n_{1}))+n_{\min}+sk^2)\prod_{i=1}^dn_i\right).
		\end{align*}
		Then, we focus on the cost of the update equations and of the tensor times (block) vector multiplications,
		 in this first phase of Algorithm~\ref{alg:dac}.
		The procedure generates $d_1$ update equations of size $n_1\times\dots\times n_d$.
		The cost of each call to \textsc{low\_rank\_sylv} is dominated by the complexity of solving $sk$ (shifted) 
		linear systems with a Kronecker sum structured matrix with $d-1$ modes. By induction, 
		the cost of all the calls to  \textsc{low\_rank\_sylv}  is bounded by
		\begin{align*}
	&\mathcal \mathcal O\left( skd_1(k(d + \log(n_1))+n_{\min}+sk^2)\prod_{j=1}^{d-1}n_j\right)\\
	&=\mathcal O\left( (k(d+\log(n_1))+n_{\min}+sk^2)\prod_{j=1}^{d}n_j\right).
	\end{align*}	
	Finally, since the tensor times (block) vector multiplications are in one-to-one correspondence 
	with the calls to \textsc{low\_rank\_sylv}, we have that the algorithm generates $d_1$ products of 
	complexity $\mathcal O(kn_1\dots n_d)$. Adding the contribution $\mathcal O(dkn_1\dots n_d)$ to the 
	cost of the subproblems provides the claim.
	\end{proof}	
		
	\section{Numerical experiments} \label{sec:numerical-experiments}
	We now test the proposed algorithm against some implementations of Algorithm~\ref{alg:diag} 
	where  the explicit diagonalization of the matrix coefficients 
	$A_t$ is either done in dense arithmetic or via the algorithm proposed in \cite{ou2022superdc}. 
	Note that,  the dense solver delivers accuracy close to machine precision while the other approaches 
	aim at a trading of some accuracy for a speedup in 
	the computational time. 
	We assess this behavior on 2D and 3D examples. 
	
	\subsection{Details of the implementation}
	An efficient implementation of Algorithm~\ref{alg:dac2d} and 
	Algorithm~\ref{alg:dac} takes some care. In particular: 
	
	\begin{itemize}
		\item In contrast to the numerical tests of Section~\ref{sec:error-analysis-2d}, 
		the number of Zolotarev shifts for fADI and RK is adaptively chosen on each level of 
		the recursion to ensure the accuracy described in \eqref{eq:sylvester-relative-accuracy}. 
		More precisely, this requires estimates of the spectra of the matrix coefficients $A_t^{(h)}$ at all 
		levels of recursion $h$; this is done via the MATLAB built-in function \texttt{eigs}.
		  However, since $A^{(h)}_i$ appears in $2^{(d-1)h}$ equations, 
		  estimating the spectra in each recursive call would incur in redundant 
		  computations. Therefore, we precompute estimates of the spectra for each 
		  block before starting Algorithm~\ref{alg:dac2d} and \ref{alg:dac}, by walking 
		  the cluster tree. 
		\item Since the $A_t$s are SPD, we remark that the 
		  correction equation can be slightly modified to obtain a right-hand side 
		  with half of the rank. This is obtained by replacing $A_j^{\mathrm{off}}$
		  with a low-rank matrix having suitable non-zero diagonal blocks. See 
		  \cite[Section~4.4.2]{kressner2019low} for the details on this idea. This 
		  is crucial for problems with higher off-diagonal ranks, and is used in the 
		  2D Fractional cases described below. 
		\item A few operations in Algorithm~\ref{alg:dac} are well suited for parallelism: 
		  the solution of the Sylvester equations at the base of the recursions
		  are all independent, and the same holds for the 
		  body of the for loop at lines 11--14. We exploit this fact by computing all 
		  the solutions in parallel using multiple cores in a shared memory 
		  environment. 
		  \item When the matrices $A_t$ are both HSS and banded, they are represented within the sparse format and the sparse direct solver of MATLAB (the backslash operator) is used for the corresponding system solving operations. Note that, in this case the peculiar location of the nonzero entries makes easy to construct the low-rank factorizations of the off-diagonal blocks.
	\end{itemize}
	In addition to fADI and rational Krylov, we consider another popular
	 low-rank solver for Sylvester equations: the \emph{extended Krylov} 
	 method~\cite{simoncini2007new} (EK). The latter corresponds to the rational 
	 Krylov method where the shift parameters alternate between the values 
	 $0$ and $\infty$. 
	 In particular, EK's iteration leverages the precomputation of either the Cholesky 
	 factorization of the sparse coefficient matrices, or the ULV factorization~\cite{xia2010fast} in the HSS case. This is convenient because the shift parameters do not vary. 
	 A slight downside is that we do not have a priori bounds on the error, and 
	 we have to monitor the residual norm throughout the iterations to detect
	 convergence.
	
	An implementation of the proposed algorithms is freely available at 
	\url{https://github.com/numpi/teq_solver}, 
	and requires \texttt{rktoolbox}\footnote{\url{https://rktoolbox.org}} 
	\cite{berljafa2015generalized}
	and \texttt{hm-toolbox}\footnote{\url{https://github.com/numpi/hm-toolbox}} 
	\cite{massei2020hm} as external 
	dependencies. The repository contains the numerical experiments 
	included in this document, and includes a copy of the SuperDC 
	solver.\footnote{\url{https://github.com/fastsolvers/SuperDC}} \cite{ou2022superdc}
	
	The experiments have
	been run on a server with two Intel(R) Xeon(R) E5-2650v4 CPU with 12 cores and 24 threads
	each, running at 2.20 GHz, using MATLAB R2021a with the Intel(R) Math Kernel Library
	Version 11.3.1. The examples have been run using the SLURM scheduler, 
	allocating $8$ cores and $240$ GB of RAM.
	
	\subsection{Laplace and fractional Laplace equations}
	\label{sec:2d-laplace-test}
	In this first experiment we validate the asymptotic complexity of 
	Algorithm~\ref{alg:dac2d} and compare the performances of various 
	low-rank solvers for the update equation. As case studies we select 
	two instances of the matrix equation $AX + XA= B$. In one 
	case  $A\in\mathbb R^{n\times n}$ is chosen as the usual central 
	finite difference discretization of  the 1D Laplacian. In the other case 
	$A$ is the Gr\"unwald-Letnikov finite difference discretization of the 1D
	 fractional Laplacian with order $1.5$, see  \cite[Section 2.2.1]{massei2019fast}. In both cases the
	 reference solution $X$ is randomly generated by means of the MATLAB command 
	 \texttt{randn(n)}, and $B$ is computed as $B := AX + XA$. 
	 We remark that for both equations the matrix $A$ is 
	 SPD and HSS; in particular for the Laplace equation $A$ is tridiagonal and 
	 stored in the sparse format, while for the fractional Laplace equation it 
	 has the rank structure depicted in Figure~\ref{fig:hss-fract}. For the Laplace 
	 equation we set $n_{\min}=512$. In view of the higher off-diagonal ranks of the 
	 fractional case, we consider the larger minimal block size $n_{\min}=2048$. 
	 \begin{figure}
		 \centering
		 \includegraphics[width=.3\textwidth, height=.3\textwidth]{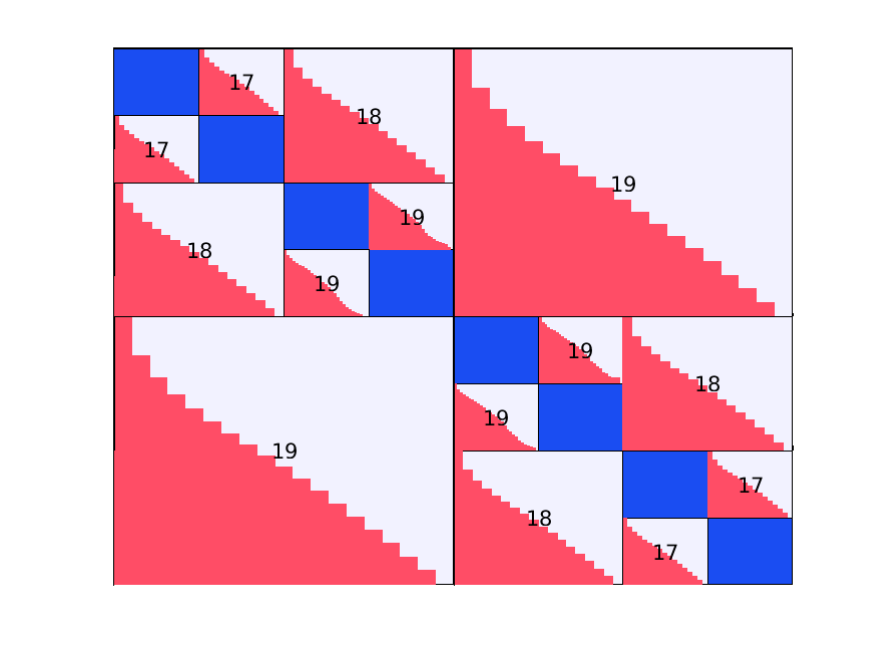}
		 \caption{Hierarchical low-rank structure of the 1D fractional 
		 Laplace operator discretized through Gr\"unwald-Letnikov finite 
		 differences. The blue blocks are dense, while the gray blocks 
		 are stored as low-rank matrices whose rank is indicated by the 
		 number in the center. In red, the magnitude of the singular values of the low-rank blocks.}
		 \label{fig:hss-fract}
	 \end{figure}
	 In the next section we will investigate  how varying this parameter affects the performances. 
	
	We consider increasing sizes $n=2^j$, $j=10,\dots, 15$ and the following solvers:
	\begin{description}
		\item[diag] Algorithm~\ref{alg:diag} 
			with explicit diagonalization of the matrix $A$ performed in dense arithmetic.
		 \item[dst] Algorithm~\ref{alg:diag} incorporating the fast diagonalization 
			 by means of the Discrete Sine Transform (DST). This approach is only considered 
			 for the 2D Laplace equation. 
		\item[superdc] solver implementing Algorithm~\ref{alg:diag} using the fast diagonalization 
		  for SPD HSS matrices described in \cite{ou2022superdc}. 
		\item[dc\_adi] Algorithm~\ref{alg:dac2d} where the fADI iteration is used as \textsc{low\_rank\_sylv}. 
		\item[dc\_rk] Algorithm~\ref{alg:dac2d} where the rational Krylov method is used as \textsc{low\_rank\_sylv}.
		\item[dc\_ek] Algorithm~\ref{alg:dac2d} where the extended Krylov method is used as \textsc{low\_rank\_sylv}.
	\end{description} 
	The shifts used in \textbf{dc\_adi} and \textbf{dc\_rk} are the optimal 
	Zolotarev zeros and poles. The number of shifts is 
	chosen to obtain similar residual norms
	 $\mathrm{Res}:=\norm{A\widetilde X + \widetilde X A -B}_F/\norm{B}_F$ 
	 of about $10^{-10}$. 
	
	We start by comparing the different implementation of Algorithm~\ref{alg:dac2d}
	with \textbf{diag}. A detailed comparison with \textbf{dst} and \textbf{superdc}
	is postponed to Section~\ref{sec:superdc}. 
	
	The running times and residuals are reported in Table~\ref{tab:2d-laplace}
	for the Laplace equation. The fractional case is reported 
	in Table~\ref{tab:2d-fractional}, for which we do not report the timings for $n=1024,2048$ 
	since our choice of $n_{\min}$ makes Algorithm~\ref{alg:dac2d} equivalent to Algorithm~\ref{alg:diag}.
	
	For both experiments, using fADI as low-rank solver yields the cheapest method. We remark that, 
	in the fractional case, \textbf{dc\_ek} outperforms \textbf{dc\_rk} since the precomputation of the 
	Cholesky factorization of $A$ (and of its sub blocks) makes the iteration of extended Krylov significantly 
	cheaper than the one of rational Krylov. 
	
	In Figure~\ref{fig:hist_2D_Laplace} and \ref{fig:hist_2D_Fractional_Laplace} 
	we display how the time is distributed among the various subtasks of \textbf{dc\_adi}, i.e., 
	the time spent on solving dense matrix equations, computing the low-rank updates, 
	forming the RHS of the update equation and updating the solution, and
	estimating the spectra.

	\begin{filecontents}[overwrite]{test1.dat}
		512	0.099554	1.2005e-13	0.18573	1.2005e-13	0.055668	1.2005e-13	0.050743	1.2005e-13
	1024	0.2207	2.9074e-13	0.57544	2.4926e-10	0.5524	1.4229e-10	0.54659	3.0559e-10
	2048	0.94992	3.8668e-13	1.0375	3.1349e-10	1.9686	1.4744e-10	2.4169	3.987e-10
	4096	5.8621	1.2454e-12	3.5119	3.4297e-10	8.4206	1.4802e-10	10.535	4.3992e-10
	8192	41.883	3.3708e-12	14.924	3.6532e-10	35.467	1.4917e-10	43.186	4.6517e-10
	16384	320.76	7.8448e-12	61.409	3.7013e-10	149.63	1.4913e-10	181.48	4.8035e-10
	32768	2511.3	8.8846e-12	258.06	3.7227e-10	625.83	1.4902e-10	759.95	4.9323e-10
	\end{filecontents}
	\begin{table}
		\caption{Timings and residuals for the solution of the 2D 
		Laplace equation of size $n \times n$ by means of Algorithm~\ref{alg:dac2d}
		using different low rank solvers, and $n_{\min} = 512$.}
		\resizebox{\textwidth}{!}{ 
		\pgfplotstabletypeset[skip rows between index={0}{1},
		every head row/.style={
			before row={
				\toprule
				\multicolumn{1}{c|}{}&
				\multicolumn{2}{c|}{\textbf{diag}}&
	\multicolumn{2}{c|}{\textbf{dc\_adi}}&
	\multicolumn{2}{c|}{\textbf{dc\_rk}}&
	\multicolumn{2}{c}{\textbf{dc\_ek}}
				\\
				\multicolumn{1}{c|}{}&
				\multicolumn{2}{c|}{}&
				\multicolumn{2}{c|}{$n_{\min} = 512$}&
				\multicolumn{2}{c|}{$n_{\min} = 512$}&
				\multicolumn{2}{c}{$n_{\min} = 512$}
				\\
			},
			after row = \midrule,
		},
		columns/0/.style = {column name = $n$, column type=c|},
			columns/1/.style = {column name = Time,precision=1,zerofill, fixed},
		columns/2/.style = {column name = Res,precision=1,zerofill, column type=c|},
			columns/3/.style = {column name = Time,precision=1,zerofill, fixed},
	columns/4/.style = {column name = Res,precision=1,zerofill, column type=c|},
			columns/5/.style = {column name = Time,precision=1,zerofill, fixed},
	columns/6/.style = {column name = Res,precision=1,zerofill, column type=c|},
			columns/7/.style = {column name = Time,precision=1,zerofill, fixed},
	columns/8/.style = {column name = Res,precision=1,zerofill, column type=c},
		]{test1.dat}}
		\label{tab:2d-laplace}
	\end{table}
	
	\begin{figure}
		\centering
		\begin{tikzpicture}
			\begin{axis}[
			title = \text{2D Laplace, $n_{\min} = 512$},
			x tick label style={
			/pgf/number format/1000 sep=},
		ylabel=\small{Percentage of time},
		width = .75\linewidth, height = .25\textheight,
		symbolic x coords={$n=4096$, $n=8192$, $n=16384$},
		xtick = data,
		tick label style={font=\scriptsize},
		enlargelimits=0.3,
		ybar,
		legend style={font=\scriptsize,
			cells={anchor=west}, anchor = north west, xshift = 0.5cm
		},
	legend image code/.code={
		\draw [#1] (0.2cm,-0.05cm) rectangle (0.5cm,0.3cm); },
	],
		legend pos = outer north east,
		]
			\addplot+[postaction={
				pattern=north east lines
			}] coordinates { ($n=4096$,61.25) ($n=8192$,56.38) ($n=16384$,58.30)  };
			\addplot+[postaction={
				pattern=north west lines
			}] coordinates { ($n=4096$,11.19) ($n=8192$,12.10) ($n=16384$,11.03)  };
			\addplot+[postaction={
				pattern=dots
			}] coordinates { ($n=4096$,17.73) ($n=8192$,24.50) ($n=16384$,25.26)  };
			\addplot+[postaction={
				pattern=crosshatch
			}] coordinates { ($n=4096$,6.25) ($n=8192$,3.38) ($n=16384$,2.08)  };
			
			\legend{Dense,Low-rank,RHS+Sol,Spectra}
			\end{axis}
			\end{tikzpicture}	
			\caption{Distribution of the time spent in the different tasks in 
			  Algorithm~\ref{alg:dac2d}. The results are for some instances 
			  of the 2D Laplace equation considered in
			  Section~\ref{sec:2d-laplace-test} with fADI as low-rank solver
			  and $n_{\min} = 512$.}
			\label{fig:hist_2D_Laplace}
	\end{figure}
	
	\begin{filecontents}[overwrite]{test1_bis.dat}
		512	0.088435	4.1337e-15	0.22348	4.1337e-15	0.083585	4.1337e-15	0.08796	4.1337e-15
	1024	0.35876	5.1055e-15	0.36213	5.1055e-15	0.37264	5.1055e-15	0.31666	5.1055e-15
	2048	2.0805	6.2894e-15	2.3068	6.2894e-15	2.0386	6.2894e-15	2.397	6.2894e-15
	4096	15.974	8.2916e-15	20.927	4.5479e-11	24.723	1.6174e-11	19.112	1.2235e-10
	8192	137.15	1.7874e-14	99.444	5.8513e-11	120.69	2.4795e-10	97.786	1.2255e-10
	16384	1061.1	2.303e-14	439.02	6.8474e-11	593.23	2.1602e-10	471.29	1.5583e-10
	32768	8297	3.1923e-14	1998	3.0864e-10	2948.1	3.557e-10	2467.1	2.9557e-10
	\end{filecontents}
	\begin{table}
		\caption{Timings and residuals for the solution of the 2D 
		fractional Laplace equation of size $n \times n$ by means of Algorithm~\ref{alg:dac2d}
		using different low rank solvers, and $n_{\min} = 2048$.}
		\resizebox{\textwidth}{!}{ 
			\pgfplotstabletypeset[skip rows between index={0}{3},
			every head row/.style={
				before row={
					\toprule
					\multicolumn{1}{c|}{}&
					\multicolumn{2}{c|}{\textbf{diag}}&
					\multicolumn{2}{c|}{\textbf{dc\_adi}}&
					\multicolumn{2}{c|}{\textbf{dc\_rk}}&
					\multicolumn{2}{c}{\textbf{dc\_ek}}
					\\
					\multicolumn{1}{c|}{}&
					\multicolumn{2}{c|}{}&
					\multicolumn{2}{c|}{$n_{\min} = 2048$}&
					\multicolumn{2}{c|}{$n_{\min} = 2048$}&
					\multicolumn{2}{c}{$n_{\min} = 2048$}
					\\
				},
				after row = \midrule,
			},
			columns/0/.style = {column name = $n$, column type=c|},
			columns/1/.style = {column name = Time,precision=1,zerofill, fixed},
			columns/2/.style = {column name = Res,precision=1,zerofill, column type=c|},
			columns/3/.style = {column name = Time,precision=1,zerofill, fixed},
			columns/4/.style = {column name = Res,precision=1,zerofill, column type=c|},
			columns/5/.style = {column name = Time,precision=1,zerofill, fixed},
			columns/6/.style = {column name = Res,precision=1,zerofill, column type=c|},
			columns/7/.style = {column name = Time,precision=1,zerofill, fixed},
			columns/8/.style = {column name = Res,precision=1,zerofill, column type=c},
			]{test1_bis.dat}}
		\label{tab:2d-fractional}
	\end{table}
	
	\begin{figure}
		\centering
		\begin{tikzpicture}
			\begin{axis}[
			title = \text{Fractional 2D Laplace, $n_{\min} = 2048$},
			x tick label style={
				/pgf/number format/1000 sep=},
			ylabel=\small{Percentage of time},
			width = .75\linewidth, height = .25\textheight,
			symbolic x coords={$n=4096$, $n=8192$, $n=16384$},
			xtick = data,
			tick label style={font=\scriptsize},
			enlargelimits=0.3,
			legend style={anchor=north west},
			ybar,
			legend style={
				font=\scriptsize,
				cells={anchor=west}, xshift = 0.5cm
			},
		legend image code/.code={
			\draw [#1] (0.2cm,-0.05cm) rectangle (0.5cm,0.3cm); },
		],
			legend pos = outer north east,
			]
			\addplot+[postaction={
				pattern=north east lines
			}] coordinates { ($n=4096$,38.42) ($n=8192$,32.50) ($n=16384$,28.14)  };
			\addplot+[postaction={
				pattern=north west lines
			}] coordinates { ($n=4096$,42.81) ($n=8192$,54.37) ($n=16384$,59.57)  };
			\addplot+[postaction={
				pattern=dots
			}] coordinates { ($n=4096$,2.27) ($n=8192$,4.04) ($n=16384$,5.60)  };
			\addplot+[postaction={
				pattern=crosshatch
			}] coordinates { ($n=4096$,13.44) ($n=8192$,5.53) ($n=16384$,3.05)  };
			
			\legend{Dense,Low-rank,RHS+Sol,Spectra}
			\end{axis}
			\end{tikzpicture}			
			\caption{Distribution of the time spent in the different tasks in 
			  Algorithm~\ref{alg:dac2d}. The results are for some instances 
			  of the 2D Fractional Laplace equation considered in
			  Section~\ref{sec:2d-laplace-test} with fADI as low-rank solver
			  and $n_{\min} = 2048$.}
			\label{fig:hist_2D_Fractional_Laplace}
	\end{figure}

	\subsection{Varying the block size}
	We perform numerical tests similar to the ones of the
	previous section, but we only consider the low-rank solver 
	fADI for the update equations and, instead, we vary the minimal 
	block size, aiming at determining the best block-size for each 
	test problem. Table~\ref{tab:2d-laplace}
	and Table~\ref{tab:2d-fractional} report the results concerning 
	$n_{\min}\in\{256, 512, 1024\}$ for the 2D Laplace equation, and 
	$n_{\min}\in\{2048, 4096, 8192\}$ for the fractional 
	Gr\"unwald-Letnikov finite differences case. 
	
	The results indicate that the choice $n_{\min} = 512$ is ideal for the 
	2D Laplace case, and $n_{\min} = 4096$ for the fractional case. 
	We expect that problems involving larger off-diagonal ranks will need 
	a larger choice of $n_{\min}$. 
	
	\begin{filecontents}[overwrite]{test2_2D.dat}
		2048	1.0882	5.2158e-13	1.9669	3.6599e-10	0.98486	3.1171e-10	0.9854	1.789e-10
		4096	5.8803	1.3431e-12	4.9768	3.8945e-10	3.5436	3.3681e-10	3.6807	2.0215e-10
		8192	42.044	3.2897e-12	19.642	4.1506e-10	14.74	3.6683e-10	16.504	2.4277e-10
		16384	319.9	7.8397e-12	80.409	4.1732e-10	59.833	3.7037e-10	67.058	2.5e-10
		32768	2512.5	8.8771e-12	340.49	4.1909e-10	259.14	3.7272e-10	283.31	2.5534e-10	
	\end{filecontents}
	
	\begin{table}	
		\caption{Timings and residuals for the solution of the 2D 
		Laplace equation of size $n \times n$ by means of Algorithm~\ref{alg:dac2d}
		using fADI as low rank solver, with different choices of 
		$n_{\min}$.}
		\resizebox{\textwidth}{!}{ 
			\pgfplotstabletypeset[skip rows between index={0}{0},
			empty cells with={\ensuremath{-}},
			every head row/.style={
				before row={
					\toprule
					\multicolumn{1}{c|}{}&
					\multicolumn{2}{c|}{\textbf{diag}}&
					\multicolumn{2}{c|}{\textbf{dc\_adi}}&
					\multicolumn{2}{c|}{\textbf{dc\_adi}}&
					\multicolumn{2}{c}{\textbf{dc\_adi}} \\
					\multicolumn{1}{c|}{}&
					\multicolumn{2}{c|}{}&
					\multicolumn{2}{c|}{$n_{\min} = 256$}&
					\multicolumn{2}{c|}{$n_{\min} = 512$}&
					\multicolumn{2}{c}{$n_{\min} = 1024$}
					\\
				},
				after row = \midrule,
			},
			columns/0/.style = {column name = $n$, column type=c|},
			columns/1/.style = {column name = Time,precision=1,zerofill, fixed},
			columns/2/.style = {column name = Res,precision=1,zerofill, column type=c|},
			columns/3/.style = {column name = Time,precision=1,zerofill, fixed},
			columns/4/.style = {column name = Res,precision=1,zerofill, column type=c|},
			columns/5/.style = {column name = Time,precision=1,zerofill, fixed},
			columns/6/.style = {column name = Res,precision=1,zerofill, column type=c|},
			columns/7/.style = {column name = Time,precision=1,zerofill, fixed},
			columns/8/.style = {column name = Res,precision=1,zerofill, column type=c},
			]{test2_2D.dat}}
		\label{tab:2d-laplace-nmin}
	\end{table}

	\begin{filecontents}[overwrite]{test3_2D.dat}
		2048	2.1748	6.3687e-15	NaN	NaN	NaN	NaN	NaN	NaN
		4096	16.802	8.1799e-15	21.826	5.353e-11	NaN	NaN	NaN	NaN
		8192	139.29	1.7944e-14	108.87	1.2156e-10	100.64	5.509e-11	NaN	NaN
		16384	1036.1	2.3063e-14	524.6	1.4613e-10	478.93	8.5513e-11	652.59	5.4519e-11
		32768	7934.2	3.1935e-14	2337.5	1.7103e-10	2034.1	1.1976e-10	2741.5	8.1899e-11	
	\end{filecontents}
	
	\begin{table}
		\caption{Timings and residuals for the solution of the 2D 	  
		  fractional Laplace equation of size $n \times n$ by means of Algorithm~\ref{alg:dac2d}
		  using fADI as low rank solver, with different choices of 
		  $n_{\min}$.}
		\resizebox{\textwidth}{!}{ 
			\pgfplotstabletypeset[skip rows between index={0}{0},
			empty cells with={\ensuremath{-}},
			every head row/.style={
				before row={
					\toprule
					\multicolumn{1}{c|}{}&
					\multicolumn{2}{c|}{\textbf{diag}}&
					\multicolumn{2}{c|}{\textbf{dc\_adi}}&
					\multicolumn{2}{c|}{\textbf{dc\_adi}}&
					\multicolumn{2}{c}{\textbf{dc\_adi}} \\
					\multicolumn{1}{c|}{}&
					\multicolumn{2}{c|}{}&
					\multicolumn{2}{c|}{$n_{\min} = 2048$}&
					\multicolumn{2}{c|}{$n_{\min} = 4096$}&
					\multicolumn{2}{c}{$n_{\min} = 8192$}
					\\
				},
				after row = \midrule,
			},
			columns/0/.style = {column name = $n$, column type=c|},
			columns/1/.style = {column name = Time,precision=1,zerofill, fixed},
			columns/2/.style = {column name = Res,precision=1,zerofill, column type=c|},
			columns/3/.style = {column name = Time,precision=1,zerofill, fixed},
			columns/4/.style = {column name = Res,precision=1,zerofill, column type=c|},
			columns/5/.style = {column name = Time,precision=1,zerofill, fixed},
			columns/6/.style = {column name = Res,precision=1,zerofill, column type=c|},
			columns/7/.style = {column name = Time,precision=1,zerofill, fixed},
			columns/8/.style = {column name = Res,precision=1,zerofill, column type=c},
			]{test3_2D.dat}}
		\label{tab:2d-fractional-nmin}
	\end{table}
	
	\subsection{Comparison with 2D state-of-the art solvers}
	\label{sec:superdc}
	
	In this section we compare Algorithm~\ref{alg:dac2d} with solvers based 
	on fast diagonalization strategies (\textbf{dst} and \textbf{superdc}). The fast diagonalization 
	procedure of \textbf{superdc} requires to set a 
	minimal block size. We have chosen $n_{\min} = 2048$, which yields the best 
	results on the cases of study. 
	
	The results are shown in Table~\ref{tab:2d-laplace-superdc} and \ref{tab:2d-lapfrac-superdc} and the running times are also displayed in Figure~\ref{fig:running-times}.
	Both \textbf{dst} and \textbf{superdc} have the quasi-optimal complexity $\mathcal O(n^2 \log n)$, 
	as Algorithm~\ref{alg:dac2d}. Algorithm~\ref{alg:dac2d} significantly 
	outperforms \textbf{superdc} in all examples, with an acceleration of more than 
	10x on the largest examples. The performances in the 2D Laplace case are 
	comparable with those of \textbf{dst}. We remark that, having as size a power of $2$ is the worst case scenario for the performance of the discrete sine transform; we expect an additional speed up of the approach based on \textbf{dst} when considering sizes of the form $2^k-1$. On the other hand, \textbf{dst} only 
	applies to the specific case of the 2D Laplace equation with constant 
	coefficients.

	\begin{filecontents}[overwrite]{test7.dat}
	1024	0.57544	2.4926e-10	0.225640	6.345830e-14	nan	nan	
	2048	1.0375	3.1349e-10	0.853807	1.144327e-13	nan	nan
	4096	3.5119	3.4297e-10	3.270838	3.271964e-13	15.51	2.9332e-12
	8192	14.924	3.6532e-10	13.497730	1.110817e-12	95.311	4.5167e-11
	16384	61.409	3.7013e-10	55.122466	2.684789e-12	593.36	9.5937e-11
	32768	258.06	3.7227e-10	240.604803	2.353214e-12	3342.6	1.5423e-10
	\end{filecontents}
	\begin{table}
		\caption{Timings and residuals for the solution of the 2D 
		Laplace equation of size $n \times n$ by means of Algorithm~\ref{alg:dac2d}
		using fADI and $n_{\min} = 512$, \textbf{superdc} and \textbf{dst}.}
		\centering
		\pgfplotstabletypeset[skip rows between index={0}{0},
		every head row/.style={
			before row={
				\toprule
				\multicolumn{1}{c|}{}&
				\multicolumn{2}{c|}{\textbf{dc\_adi}}&
	\multicolumn{2}{c|}{\textbf{dst}}&
	\multicolumn{2}{c}{\textbf{superdc}}
				\\
				\multicolumn{1}{c|}{}&
				\multicolumn{2}{c|}{$n_{\min} = 512$}&
				\multicolumn{2}{c|}{}&
				\multicolumn{2}{c}{$n_{\min} = 2048$}
				\\
			},
			after row = \midrule,
		},
		columns/0/.style = {column name = $n$, column type=c|},
			columns/1/.style = {column name = Time,precision=1,zerofill, fixed},
		columns/2/.style = {column name = Res,precision=1,zerofill, column type=c|},
			columns/3/.style = {column name = Time,precision=1,zerofill, fixed},
	columns/4/.style = {column name = Res,precision=1,zerofill, column type=c|},
			columns/5/.style = {column name = Time,precision=1,zerofill, fixed},
	columns/6/.style = {column name = Res,precision=1,zerofill, column type=c},
		]{test7.dat}
		\label{tab:2d-laplace-superdc}
	\end{table}
	
	\begin{filecontents}[overwrite]{test7-1.dat}
		4096	21.826	5.353e-11	NaN	NaN	91.405	4.0442e-09
		8192	108.87	1.2156e-10	100.64	5.509e-11	685.72	5.3405e-09
		16384	524.6	1.4613e-10	478.93	8.5513e-11	4715	5.7253e-09
		32768	2337.5	1.7103e-10	2034.1	1.1976e-10	27546	5.8792e-09
	\end{filecontents}
		\begin{table}
			\caption{Timings and residuals for the solution of the 2D 
			Fractional Laplace equation of size $n \times n$ by means of Algorithm~\ref{alg:dac2d}
			using fADI and $n_{\min} = 2048, 4096$, and \textbf{superdc}.}
			\centering
			\pgfplotstabletypeset[skip rows between index={0}{0},
			every head row/.style={
				before row={
					\toprule
					\multicolumn{1}{c|}{}&
					\multicolumn{2}{c|}{\textbf{dc\_adi}}&
		\multicolumn{2}{c|}{\textbf{dc\_adi}}&
		\multicolumn{2}{c}{\textbf{superdc}}
					\\
					\multicolumn{1}{c|}{}&
					\multicolumn{2}{c|}{$n_{\min} = 2048$}&
					\multicolumn{2}{c|}{$n_{\min} = 4096$}&
					\multicolumn{2}{c}{$n_{\min} = 2048$}
					\\
				},
				after row = \midrule,
			},
			columns/0/.style = {column name = $n$, column type=c|},
				columns/1/.style = {column name = Time,precision=1,zerofill, fixed},
			columns/2/.style = {column name = Res,precision=1,zerofill, column type=c|},
				columns/3/.style = {column name = Time,precision=1,zerofill, fixed},
		columns/4/.style = {column name = Res,precision=1,zerofill, column type=c|},
				columns/5/.style = {column name = Time,precision=1,zerofill, fixed},
		columns/6/.style = {column name = Res,precision=1,zerofill, column type=c},
			]{test7-1.dat}
			\label{tab:2d-lapfrac-superdc}
		\end{table}
	
	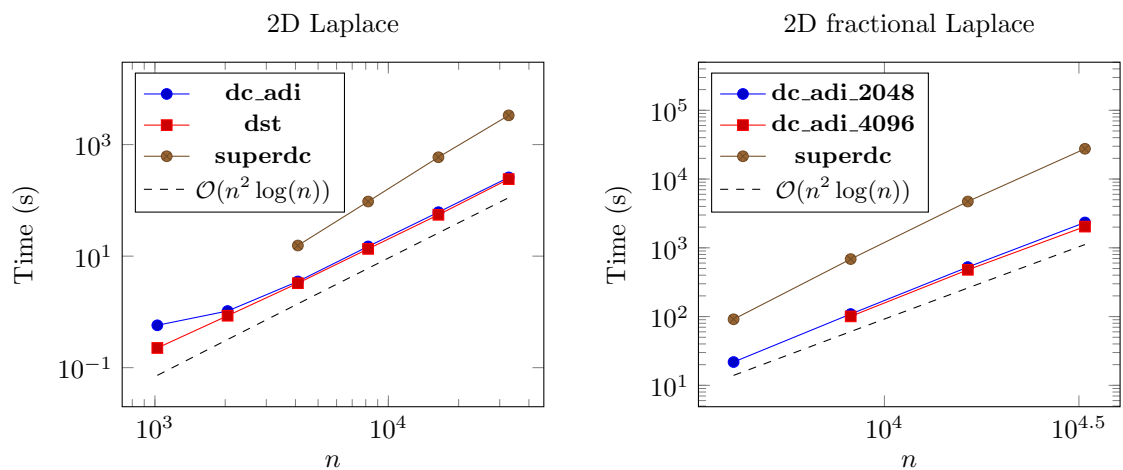
\begin{figure}
		\begin{tikzpicture}
			\begin{loglogaxis}[legend pos = north west, width = .485\linewidth, legend style={font=\small}, ymax=3e4, ylabel = {Time (s)}, xlabel = $n$, title = {2D Laplace}]
\addplot table[x index = 0, y index = 1]{test7.dat};
\addplot table[x index = 0, y index = 3]{test7.dat};
\addplot table[x index = 0, y index = 5]{test7.dat};
\addplot[domain = 1024:32768, dashed]{1e-8*x^2 * ln(x)};
\legend{\textbf{dc\_adi}, \textbf{dst}, \textbf{superdc}, $\mathcal O(n^2\log(n))$};
				\end{loglogaxis}
		\end{tikzpicture}~~~~~~\
			\begin{tikzpicture}
		\begin{loglogaxis}[legend pos = north west, width = .485\linewidth, legend style={font=\small}, ymax=5e5, ylabel = {Time (s)}, xlabel = $n$, title = {2D fractional Laplace}]
			\addplot table[x index = 0, y index = 1]{test7-1.dat};
			\addplot table[x index = 0, y index = 3]{test7-1.dat};
			\addplot table[x index = 0, y index = 5]{test7-1.dat};
			\addplot[domain = 4096:32768, dashed]{1e-7*x^2 * ln(x)};
			\legend{\textbf{dc\_adi\_2048}, \textbf{dc\_adi\_4096}, \textbf{superdc}, $\mathcal O(n^2\log(n))$};
		\end{loglogaxis}
	\end{tikzpicture}
\caption{Log-log plot of the running times for the solution of the 2D Laplace equation (left) and the 2D fractional Laplace equation (right) associated with the numerical tests in Section~\ref{sec:superdc}.}\label{fig:running-times}
	\end{figure}
	
	\subsection{3D Laplace equation}
	
	We now test the 3D version of the Laplace solver described in 
	Section~\ref{sec:2d-laplace-test}. More precisely, we solve 
	the tensor equation 
	\begin{equation} \label{eq:3d-test}
		\T X \times_1 A_1 + \T X \times_2 A_2 + \T X \times_3 A_3 = \T B, 
	\end{equation}
	where $A_t$ are finite difference discretization of the 1D Laplacian
	with zero Dirichlet boundary condition of sizes $n_i \times n_i$. 
	The reference solution $\T X$ is randomly generated by means 
	of \texttt{randn(n1,n2,n3)}, and $\T B$ is set evaluating \eqref{eq:3d-test}. 
	We remark that in the 3D case we can choose two different block sizes: one 
	for the recursion in the tensor equation, and one for the recursive 
	calls to the 2D solver. In this section we indicate the former 
	with $n_{\min}$ and the latter is set to $1024$ in all the examples, 
	with the only exception of the scaling test in Section~\ref{sec:3d-scaling}, 
	where all block sizes (2D and 3D) are set to $32$.

	The low-rank solver for the update equations is fADI, 
	and the tolerance $\epsilon$ in Algorithm~\ref{alg:dac}
	is set to $\epsilon = 10^{-6}$.
	We consider two test cases.
	
	\paragraph{Test 1}
	We choose $n = n_1 = n_2 = n_3$ ranging in 
	$\{ 256, 512, 1024 \}$  and the considered block sizes
	are $n_{\min} \in \{ 128, 256, 512 \}$. Note that, in this 
	test case all the 2D problems in the recursion are solved with 
	the dense method, in view of our choice of the minimal block size. 
	The results are reported in Table~\ref{tab:3d-laplace-nmin}. 
	
	The results show that the dense method is faster for all choices 
	of $n$ and $n_{\min}$, although the scaling 
	suggests that a breakeven point should be reached around
	$n=2048$ and $n_{\min}$ $1024$. However, this is not achievable with 
	the computational resources at our disposal, since in the case 
	$n = 2048$ the solution cannot be stored in the system memory. 
	
	\begin{filecontents}[overwrite]{test4_3D.dat}
	256	1.0351	9.857e-15	4.8024	1.6281e-08	NaN NaN	NaN	NaN
	512	11.52	1.1286e-14	25.356	2.1031e-08	18.123	1.3272e-08	NaN NaN
	1024	144.12	1.8032e-14	258.01	2.3509e-08	242.47	1.6936e-08	193.43	1.052e-08
	\end{filecontents}

	\begin{table}
		\caption{Timings and residuals for the solution of the 3D 
		  Laplace equation of dimension $n \times n \times n$ 
		  by means of Algorithm~\ref{alg:dac2d}
		  using fADI as low rank solver, with different choices of 
		  $n_{\min}$ for the 3D splitting. The $n_{\min}$ used in the recursive 
		  2D solver is fixed to $n_{\min} = 1024$.}
		\resizebox{\textwidth}{!}{ 
			\pgfplotstabletypeset[skip rows between index={0}{0},
			empty cells with={\ensuremath{-}},
			every head row/.style={
				before row={
					\toprule
					\multicolumn{1}{c|}{}&
					\multicolumn{2}{c|}{\textbf{diag}}&
					\multicolumn{2}{c|}{\textbf{dc\_adi}}&
					\multicolumn{2}{c|}{\textbf{dc\_adi}}&
					\multicolumn{2}{c}{\textbf{dc\_adi}} \\
					\multicolumn{1}{c|}{}&
					\multicolumn{2}{c|}{}&
					\multicolumn{2}{c|}{$n_{\min} = 128$}&
					\multicolumn{2}{c|}{$n_{\min} = 256$}&
					\multicolumn{2}{c}{$n_{\min} = 512$}
					\\
				},
				after row = \midrule,
			},
			columns/0/.style = {column name = $n$, column type=c|},
			columns/1/.style = {column name = Time,precision=1,zerofill, fixed},
			columns/2/.style = {column name = Res,precision=1,zerofill, column type=c|},
			columns/3/.style = {column name = Time,precision=1,zerofill, fixed},
			columns/4/.style = {column name = Res,precision=1,zerofill, column type=c|},
			columns/5/.style = {column name = Time,precision=1,zerofill, fixed},
			columns/6/.style = {column name = Res,precision=1,zerofill, column type=c|},
			columns/7/.style = {column name = Time,precision=1,zerofill, fixed},
			columns/8/.style = {column name = Res,precision=1,zerofill, column type=c},
			]{test4_3D.dat}}
		\label{tab:3d-laplace-nmin}
	\end{table}
	
	\paragraph{Test 2}
	
	We choose the unbalanced dimensions $n_1 \times 512 \times 512$ with 
	$n_1 = 2^j$ for $j = 10, \ldots, 14$. We choose $n_{\min} = 256$ 
	for the 3D splitting. Since the recursion is structured to split 
	larger dimensions first, also in this case the recursive 2D problems 
	are solved with the dense solver. The results in Table~\ref{tab:3d-laplace-n1}
	confirm the expected linear scaling with respect to $n_1$, and 
	the approach is faster than the dense solver from dimension 
	$n_1 = 4096$. 
	
	\begin{filecontents}[overwrite]{test5_3D.dat}
		1024	25.674	1.2733e-14	40.951	1.3719e-08
		2048	68.523	1.4607e-14	87.925	1.4077e-08
		4096	201.95	1.436e-14	187.54	1.422e-08
		8192	680.83	1.523e-14	404.03	1.4287e-08
		16384	2417.9	1.6778e-14	975.12	1.4319e-08	
		\end{filecontents}

		\begin{table}
			\caption{Timings and residuals for the solution of the 3D 
			  Laplace equation of dimension $n_1 \times 512 \times 512$ 
			  by means of Algorithm~\ref{alg:dac2d}
			  using fADI as low rank solver, with 
			  $n_{\min} = 256$ for the 3D splitting, and $n_{\min} = 1024$ for the 
			  recursive 2D solver.}
			\centering
				\pgfplotstabletypeset[skip rows between index={0}{0},
				empty cells with={\ensuremath{-}},
				every head row/.style={
					before row={
						\toprule
						\multicolumn{1}{c|}{}&
						\multicolumn{2}{c|}{\textbf{diag}}&
						\multicolumn{2}{c}{\textbf{dc\_adi}} \\
						\multicolumn{1}{c|}{}&
						\multicolumn{2}{c|}{}&
						\multicolumn{2}{c}{$n_{\min} = 256$}
						\\
					},
					after row = \midrule,
				},
				columns/0/.style = {column name = $n_1$, column type=c|},
				columns/1/.style = {column name = Time,precision=1,zerofill, fixed},
				columns/2/.style = {column name = Res,precision=1,zerofill, column type=c|},
				columns/3/.style = {column name = Time,precision=1,zerofill, fixed},
				columns/4/.style = {column name = Res,precision=1,zerofill, column type=c},
				]{test5_3D.dat}
			\label{tab:3d-laplace-n1}
		\end{table}
	
		\subsection{Asymptotic complexity in the 3D case}
		\label{sec:3d-scaling}
	
		The previous experiment 
		provides too few data points to assess the expected cubic complexity.
		In addition, the use of the dense solver does not allow to validate the error 
		analysis that we have performed, and that 
		guarantees that the inexact solving of the subproblems does not
		destroy the final accuracy. 
		
		To validate 
		the scaling and accuracy of Algorithm~\ref{alg:dac} as $n$ grows, 
		we set the minimal block size 
		to the small value $n_{\min} = 32$, and we measure the timings 
		for problems of size between $n = 64$ and $n = 1024$.
		The results are reported in Figure~\ref{fig:3d-scaling} and  Figure~\ref{fig:running-times3D}; the latter 
		confirm the predicted accuracy and the almost cubic scaling. 
	
		In addition, in the right part of Figure~\ref{fig:3d-scaling} we 
		display the time spent in the various parts of Algorithm~\ref{alg:dac}. 
		This highlights that the solution of the update equations dominates the 
		other costs, which is expected in view of the small $n_{\min}$. We 
		remark that the latter
		include the calls to the 2D solver described in Algorithm~\ref{alg:dac2d}.
	
		\begin{filecontents}[overwrite]{test6_3D.dat}
	64	0.55549	2.2607e-08
	128	1.7255	2.9292e-08
	256	11.526	3.3761e-08
	512	89.08	3.6305e-08
	1024	839.79	7.7682e-08		
		\end{filecontents}
	
		\begin{figure}
			\centering
			\begin{minipage}{.35\linewidth}
				\pgfplotstabletypeset[skip rows between index={0}{0},
				empty cells with={\ensuremath{-}},
				every head row/.style={
					before row={
						\toprule
						\multicolumn{1}{c|}{}&
						\multicolumn{2}{c}{\textbf{dc\_adi}} \\
						\multicolumn{1}{c|}{}&
						\multicolumn{2}{c}{$n_{\min} = 32$}
						\\
					},
					after row = \midrule,
				},
				columns/0/.style = {column name = $n$, column type=c|},
				columns/1/.style = {column name = Time,precision=1,zerofill, fixed},
				columns/2/.style = {column name = Res,precision=1,zerofill, column type=c},
				]{test6_3D.dat}
			\end{minipage}~\begin{minipage}{.64\linewidth}
				\begin{tikzpicture}
					\begin{axis}[
					title = \text{3D Laplace, $n_{\min} = 32$},
					x tick label style={
						/pgf/number format/1000 sep=},
					ylabel=\small{Percentage of time},
					width = .82\linewidth, height = .2\textheight,
					symbolic x coords={$n=256$, $n=512$, $n=1024$},
					xtick = data,
					tick label style={font=\scriptsize},
					enlargelimits=0.3,
					ybar,
					bar width = .25cm,
					legend style={					
						font=\scriptsize,
						at={(0.99,-0.25)},
						legend columns = -1,
						cells={anchor=west}
					},
					legend image code/.code={
						\draw [#1] (0.2cm,-0.05cm) rectangle (0.5cm,0.3cm); },
					],
					]
					\addplot+[postaction={
						pattern=north east lines
					}] coordinates { ($n=256$,6.93) ($n=512$,6.79) ($n=1024$,5.27)  };
					\addplot+[postaction={
						pattern=north west lines
					}] coordinates { ($n=256$,74.80) ($n=512$,72.68) ($n=1024$,73.52)  };
					\addplot+[postaction={
						pattern=dots
					}] coordinates { ($n=256$,12.13) ($n=512$,14.00) ($n=1024$,14.15)  };
					\addplot+[postaction={
						pattern=crosshatch
					}] coordinates { ($n=256$,1.16) ($n=512$,0.20) ($n=1024$,0.04)  };
					\legend{Dense,Low-rank,RHS+Sol,Spectra}
					\end{axis}
					\end{tikzpicture}					
			\end{minipage}
			\caption{On the left, timings and residuals for the 3D Laplace example 
			  in Section~\ref{sec:3d-scaling}, with $n_{\min} = 32$ and fADI 
			  as a low-rank solver for the update equations. On the right, the 
			  distribution of the time spent in the different subtasks of 
			  Algorithm~\ref{alg:dac}. }
			\label{fig:3d-scaling}
		\end{figure}
	
		\begin{figure}
			\centering
		\begin{tikzpicture}
			\begin{loglogaxis}[legend pos = north west, height = .25\textheight, legend style={font=\small}, ylabel = {Time (s)}, xlabel = $n$, title = {3D Laplace, $n_{\min}=32$}]
				\addplot table[x index = 0, y index = 1]{test6_3D.dat};
				\addplot[domain = 64:1024, dashed]{5e-8*x^3 * ln(x)};
				\legend{\textbf{dc\_adi},  $\mathcal O(n^3\log(n))$};
			\end{loglogaxis}
		\end{tikzpicture}
	\caption{Log-log plot of the running times for the solution of the 3D Laplace example in Section~\ref{sec:3d-scaling}.}\label{fig:running-times3D}
	\end{figure}
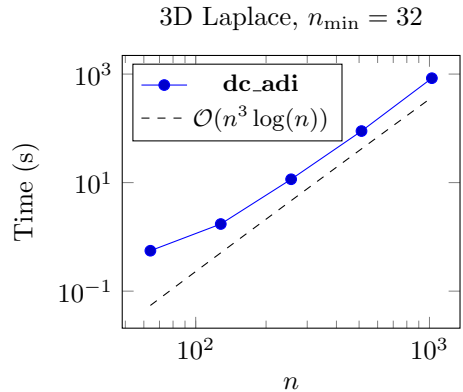
		\section{Conclusions}
	We have proposed a new solver for positive definite tensor Sylvester equation with hierarchically low-rank 
	coefficients that attains the quasi-optimal complexity $\mathcal O(n^d\log(n))$. Our procedure is based on a 
	nested divide-and-conquer paradigm. We have developed an error analysis that reveals the relation between the level
	 on inexactness in the solution of the nested subproblems and the final accuracy. 
	
	 The numerical results demonstrate that the proposed solver can significantly speed up the solution of 
	 matrix Sylvester equations of medium size. In the 3D-tensor case with equal size,  
	the method is slower than the dense solver based on diagonalization, when addressing sizes
	 up to $1024\times1024\times1024$. On the other hand, the performances are quite close, and we expect
	  that running the simulations in a distributed memory environment or on a machine with a high level
	   of performance would uncover a breakeven point around $2048\times2048\times2048$. 
	
	A further speed up might be reached employing a relaxation strategy for the inexactness 
	of the linear system solving in fADI or RK \cite{kurschner2020inexact}. This may provide significant 
	advantages for $d>2$, where the time spent on solving the equations with low-rank right-hand side is above 
	the $70\%$ of the total.
	
	Another promising direction is to adapt the method to exploit block low-rank structures in the
	 right-hand side; this will subject of future investigations.

\bibliographystyle{siam}
\bibliography{library}

\end{document}